\newcommand{\norm}[1]{\left\Vert#1\right\Vert}
\newcommand{\abs}[1]{\left\vert#1\right\vert}
\newcommand{\Set}[1]{\ensuremath{ \left\{ #1 \right\} }}
\newcommand{\set}[1]{\ensuremath{ \{ #1 \} }}
\renewcommand{\mid}{\,|\,}
\newcommand{\Mid}{\:\big | \:}
\newcommand{\mmid}{\;\Big | \;}
\DeclareMathOperator*{\esssup}{ess\,sup}
\DeclareMathOperator*{\essinf}{ess\,inf}
\begin{document}

%\title{Controlled Maximal Subsolutions of BSDEs}
\title{Portfolio Optimization under Nonlinear Utility}
\tnotetext[t]{We thank Victor Nzengang for  fruitful discussions}

%\ArXiV{Blablabla}
%\thanksColleagues{%We thank ... for helpful comments and fruitful discussions. }
\keyAMSClassification{60H20; 93E20; 91B16; 91G10 }
%\keyJELClassification{C02}  n

%%Authors
\author[1,s]{Gregor Heyne}
\author[b,2]{Michael Kupper}
\author[b,3,u]{Ludovic Tangpi}

%%Address
%\address[a]{Humboldt-Universit\"at Berlin, Unter den Linden 6, 10099 Berlin, Germany}
\address[b]{Universit\"at Konstanz, Universit\"atsstra\ss e 10, 78464 Konstanz, Germany}

%%Email
\eMail[1]{gh.heyne@gmail.com}
\eMail[2]{kupper@uni-konstanz.de}
\eMail[3]{ludovic.tangpi@uni-konstanz.de}

%%Thanks

\myThanks[s]{MATHEON project E.2}
%\myThanks[s]{MATHEON project E.2}
%\myThanks[t]{MATHEON project E.11}
\myThanks[u]{Partially supported by a Ph.D. scholarship of the Berlin Mathematical School}

%is gratefully acknowledged. Part of the research was done while this author was visiting Italy in 2012. The hospitality of the University of Milano-Bicocca is greatly appreciated.

\date{\today}
%\date{November 13, 2013}

\abstract{
	This paper studies the utility maximization problem of an agent with non-trivial endowment, and whose preferences are modeled by the maximal subsolution of a BSDE.
	We prove existence of an optimal trading strategy and relate our existence result to the existence of a maximal subsolution to a controlled decoupled FBSDE.
	Using BSDE duality, we show that the utility maximization problem can be seen as a robust control problem admitting a saddle point if the generator of the BSDE additionally satisfies a specific growth condition.
	We show by convex duality that any saddle point of the robust control problem agrees with a primal and a dual optimizer of the utility maximization problem, and can be characterized in terms of a BSDE solution.
	}
\keyWords{Subsolutions of BSDEs; submartingale; Convex duality; Utility maximization.}
%\keyAMSClassification{47H10, 13C13, 46A19, 60H25}

\maketitle

%\input{changes}
%!TEX root = optimierung.tex
\section{Introduction}
% \addcontentsline{toc}{section}{Introduction}
% \markboth{\uppercase{Introduction}}{\uppercase{Introduction}}
The theory of expected utility is of fundamental importance in finance and economy.
Introduced by \citet{Bernouilli}, the expected utility represents the level of satisfaction of a financial agent acting in a risky environment.
In their seminal \emph{Theory of Games and Economic Behavior}, \citet{vonN-Mor} have provided an axiomatic foundation for decision making under risk 
%according to expected utility 
based on \emph{rational} principles; and by the work of \citet{Savage01}, under these axioms preferences can be modeled as expected utility. 
However, the axioms of von Neumann and Morgenstern have been much criticized by empirical studies such as the 
%studies description of expected utility has been seriously debated, mainly because it implies that the preference of a financial agent should be linear with respect to the probability distribution of the possible outcomes, contrary to the 
well known Allais paradox and Ellsberg paradox.
On the other hand, expected utility does not capture uncertainty in the underlying probabilistic model. 
Many alternative approaches have been suggested to model decision beyond expected utility.
% which do not satisfy von Neumann-Morgenstern's axioms.
A few examples include the concepts of capacity and weighted expected utility and, more recently, the recursive utility and the $g$-expectation.
Following this trend, we consider in the present work the portfolio optimization of an agent whose utility is modeled by the maximal subsolution of a nonlinear backward stochastic differential equation (BSDE). Our principal aim is to give sufficient, and necessary conditions of existence of an optimal portfolio in this framework.

Amongst the numerous attempts that have been made in the literature to study portfolio optimization under nonlinear utility, the work of \citet{Kar-Peng-Que01} on the optimization of stochastic differential utility is especially related to ours.
This class of utility functions were introduced by \citet{Duf-Eps} and can be seen as solutions of nonlinear BSDEs.
In a non-Markovian model, \citet{Kar-Peng-Que01} prove existence of an optimal trading strategy and an optimal consumption policy and characterize the optimal wealth process and the utility as solutions of a forward-backward system.
They assume that the generator of the BSDE satisfies a linear growth condition and is continuously differentiable in all variables, so that the utility itself is differentiable and satisfies a comparison principle.
Their results are based on BSDE theory:
Notably, the existence result follows from a penalization method which consists in approaching the problem by a sequence of penalized problems that can be solved, and then obtain the solution by compactness arguments.

%The theories of BSDEs and convex duality have found a strong foothold in utility maximization, especially in the non-Markovian framework.
%{\color{red}developpe sur deux autres phrases}

The first contribution of the present paper is to give conditions that guarantee the existence of an optimal trading strategy for an agent whose utility is given as the maximal subsolution of a BSDE. 
We consider a non-Markovian incomplete market model where the agent also has a random terminal endowment, and the utility is modeled by a BSDE whose generator is convex, positive, lower semicontinuous and satisfies a normalization condition.
The technique of the proof, inspired from \citet{DHK1101}, rests on localization arguments and compactness principles.
We do not impose any artificial integrability with respect to the historical probability measure on the wealth process.
Hence, the central idea here is to introduce an auxiliary function under which the image of the terminal conditions will be uniformly integrable in the set of subsolutions. 
To this end, we require the drift to satisfy a suitable integrability condition. 
This uniform integrability allows for the construction of a localizing sequence of stopping times that makes the value processes of the admissible subsolutions local submartingales. 
Thus, compactness results for sequences of martingales, see \citet{Del-Sch96}, and sequences of increasing finite variation processes can be used locally in time, and the candidate solutions obtained by almost sure convergence of the sequence of stopping times to the time horizon.
The verification follows from Fatou's lemma and join convexity of the generator.

Analogous to the case of recursive utility studied by \citet{Kar-Peng-Que01}, there is an intrinsic link between the optimal wealth process and its utility:
They can be seen as a maximal subsolution of a forward-backward system.
%Note, however, that the maximality here is at a fixed time and not on the whole time interval as in the paper of \citet{DHK1101} .

We also address the question of characterization of an optimal trading strategy.
In the optimal stochastic control literature, such a characterization is usually a consequence of the stochastic maximum principle.
One introduces a perturbation of the optimal diffusion and, by It\^o's formula, obtains at the limit a variational equation which enables to characterize the optimal control, see for instance \citet{PengFBSDE} and \citet{Hor-etal}.
This characterization follows from the fact that the expectation operator is linear, a property that our operator does not enjoy.
The idea to get around this difficulty is to use the duality of BSDEs studied by \citet{tarpodual}, and transform the original control problem into a robust control problem with non-zero penalty term.
Provided that the robust control problem admits a saddle point, the problem can be linearized and the maximum principle applies.
The proof of the existence of a saddle point follows from the existence of an optimal trading strategy and a weak compactness argument introduced by \citet{Delbaen11} which is achieved under a growth condition on the generator of the BSDE.

The theory of BSDE duality fits quite well to our setting.
It shows for instance that our maximization problem is nothing but the maximization of recursive utilities under model uncertainty.
And because our generator depends on the value process, the uncertainty here also encompasses the uncertainty about the time value of money, see \citet{Kar-Rav} and \citet{tarpodual}.
It also enables us to write and solve the dual problem and characterize its solution in terms of solutions of a BSDE, and shows that the dual optimizer is, in fact, the optimal probabilistic model.

Before presenting the structure of our work, let us give further references of related works. 
%We cannot make an exhaustive survey, since the utility maximization literature is vast and growing.
Using a convex duality approach, the expected utility maximization problem was studied by \citet{Kra-Sch}.
They give precise conditions on the utility function for a solution to exist.
\citet{Cvi-Sch-Wang} have extended their results to the non-zero random endowment case.
A fully probabilistic method to study the problem has been investigated by \citet{Hu-Imk-Mul}. 
For exponential utility, they characterize the value function and the optimal strategy of the problem with random endowment as the solution of a quadratic BSDE.
Beyond the exponential utility case, \citet{Hor-etal} show that the problem can be solved via forward backward systems.
Robust expected utilities have been considered by \citet{Bor-Mat-Sch} and \citet{Fai-Mat-Mnif}.
The latter authors consider a problem with non-zero penalty term and prove existence of and optimal model.
\citet{Oks-Sul13} show that the robust control problem can be treated as a stochastic differential game, a consideration that is also implicitly used in the present paper. 
%Among authors studying portfolio optimization under non-linear utility, we can also mention - - - - -

The next section of the paper is dedicated to the setting of the probabilistic framework of our study and introduces the market model.
Section \ref{sec:existence} studies the primal problem: 
We prove existence of an optimal strategy and stability of the utility operator.
The third section deals with the dual problem. 
Notably, we prove existence of a dual optimizer and characterize the dual and primal optimizers by means of BSDE solutions.
In the last section, we draw the link between duality of BSDEs and the general theory of convex duality.
We gather in an appendix some proofs that are classical in the theory of convex BSDEs but still need to be adapted to our setting for completeness.
\section{Setup and Market Model}
\label{sec:setting}
Let $T\in (0,\infty)$ be a fixed time horizon, and let $(\Omega, \mathcal{F}, (\mathcal{F}_t)_{t\in[0,T]}, P)$ be a filtered probability space. 
The filtration $(\mathcal{F}_t)$ is generated by a $d$-dimensional Brownian motion $W$ and satisfies the usual assumptions of completeness and right-continuity, with $\mathcal{F}_T = \mathcal{F}$. 
Statements concerning random variables or stochastic processes are understood in the $P$-almost sure or the $P\otimes dt$-almost sure sense, respectively. 
%For instance, by a c\`adl\`ag process we mean a stochastic process whose paths almost surely are right continuous and have left limits. 
Indistinguishable processes are identified. 
When we make a statement without any precision regarding the probability measure, then we are referring to the probability measure $P$. 
Thus, by ``$M$ is a martingale'' we mean ``$M$ is a $P$-martingale''.

We write $L^0$ for the space of $\mathcal{F}$-measurable random variables endowed with the topology of convergence in probability with respect to the measure $P$.
By $\mathcal{S}:=\mathcal{S}(\mathbb{R})$ we denote the set of adapted processes with values in $\mathbb{R}$ which are c\`adl\`ag. 
For $p\in [1, \infty]$, the space $L^p(\Omega, \mathcal{F},P)$ is denoted by $L^p$ and for a different measure $Q$ we write $L^p(Q)$ for $L^p(\Omega, \mathcal{F}, Q)$. 
The space $L^p_+$ is the space of positive random variables belonging to $L^p$.  
We further denote by $\mathcal{L}^p:= \mathcal{L}^p(P)$ the set of predictable processes $Z$ with values in $\mathbb{R}^{1\times d}$, endowed with the norm $\norm{ Z }_{\mathcal{L}^p}:= E_P[(\int_0^T\norm{ Z_s }^2\,ds)^{p/2}]^{1/p}$. 
From \cite{Pro}, for every $Z \in \mathcal{L}^p$ the process $(\int_0^tZ_s\,dW_s)_{t\in[0,T]}$ is well defined and by means of Burkholder-Davis-Gundy's inequality, it is a continuous martingale. 
By $\mathcal{L}$ we denote the set of predictable processes valued in $\mathbb{R}^{1\times d}$ such that there exits a localizing sequence of stopping times $(\tau^n)$ with $Z1_{[0,\tau^n]} \in \mathcal{L}^1$, for all $n\in \mathbb{N}$. 
For $Z \in \mathcal{L}$, the stochastic process $(\int_0^tZ_u\,dW_u)_{t\in[0,T]}$ is a well defined continuous local martingale. Furthermore, for adequate integrands $a$ and $Z$ we write $\int a\,ds$ and $\int Z\,dW$ for $(\int_0^ta_s\,ds)_{t\in[0,T]}$ and $(\int_0^tZ_u\,dW_u)_{t\in[0,T]}$, respectively. 
The running maximum of a process $X$ is denoted by $X^*_t = \sup_{s\in[0,t]}\vert X_s\vert$. 
Given a sequence $(x_n)$ in some convex set, a sequence $(\tilde{x}_n)$ is said to be in the asymptotic convex hull of $(x_n)$ if $\tilde{x}_n\in conv\{x^n, x^{n+1},\dots\}$ for all $n$.

In the financial market, there are available for trading $n$ stocks, $n\leq d$, with price dynamics
\begin{equation*}
	dS^i_t = S_t^i(\mu_t^i\,dt + \sigma^i_t\,dW_t), \quad i = 1,\dots,n,
\end{equation*}
such that $\mu^i$ and $\sigma^i$ are predictable processes valued in $\mathbb{R}$ and $\mathbb{R}^d$, respectively. 
Let us denote by $\sigma$ the $n\times d$ matrix with row vectors $\sigma^i$, the matrix\footnote{$\sigma'$ is the transpose of $\sigma$.} $\sigma\sigma'$ is assumed to be of full rank, so that the market price of risk $\theta$ takes the form $\theta_t = \sigma_t'(\sigma_t\sigma_t')^{-1}\mu_t$, $t\in [0,T]$. 
For the rest of the paper, we make the following standing assumption concerning $\theta$:
\begin{itemize}
	\item There exist constants $p>1$ and $C_\theta> 0$ such that for all stopping times $0\le \tau \leq T$, one has 
	\begin{equation}\label{assump}
		E\left[\left( \mathcal{E}\left(\int\theta \,dW\right)_{\tau} /\mathcal{E}\left(\int\theta \,dW\right)_T \right)^{\frac{1}{p-1}} \mid \mathcal{F}_{\tau} \right] \leq C_\theta, \tag{A}
	\end{equation}
\end{itemize}
where $\mathcal{E}(\int \theta\,dW)$ denotes the stochastic exponential of $\int \theta \,dW$.
This is the so-called Muckenhoupt $A_p$ condition. 
Under this assumption, by \cite[Theorem 2.4]{Kaz}, $\int\theta\,dW$ is a BMO martingale, and therefore $\frac{dQ}{dP} = \mathcal{E}(-\int\theta \,dW)_T$ defines a probability measure $Q$ equivalent to $P$. 
This type of drift conditions are well-known, especially in the context of expected utility maximization, see for instance \citet{Delbaen2}.
Let $x> 0$ be a fixed initial capital.
A trading strategy is a predictable $d$-dimensional process $\pi$ such that $\pi\sigma \in {\cal L}(Q)$ and $X^\pi\ge 0$, 
where the wealth process $X^{\pi}$ is given by
\begin{equation}
\label{eq:wealth}
	X^{\pi}_t = x + \int_0^t\pi_s\sigma_s(\theta_s\,ds + dW_s), \quad t\in [0,T].
\end{equation}
We denote by $\Pi$ the set of trading strategies. 
%Observe that for every trading strategy the wealth process $X^{\pi}$ can be written as a stochastic exponential, namely, $X^{\pi} = x\mathcal{E}\bigl( \int \pi\sigma(dW + \theta\,du)\bigr)$. 
For every $\pi \in \Pi$, $X^{\pi}$ is a positive $Q$-local martingale and thus a $Q$-supermartingale.
In particular, the market is free of arbitrage opportunities.
The principal objective of this paper is to study the utility maximization from the terminal wealth of an agent who has a non-trivial endowment $\xi$ and whose utility is modeled by a BSDE.

The generator we consider for the BSDEs is a jointly measurable function $g :\Omega\times [0,T] \times \mathbb{R}_+ \times \mathbb{R}^{1\times d} \to \mathbb{R} \cup \{ +\infty\}$, where $\Omega \times [0,T]$ is endowed with the predictable $\sigma$-algebra. 
Furthermore, a generator $g$ is said to be
\begin{enumerate}[label=(\textsc{Conv}),leftmargin=40pt]
	\item convex, if $(y,z)\mapsto g(y,z)$ is convex,\label{jconv}
\end{enumerate}
\begin{enumerate}[label=(\textsc{Lsc}),leftmargin=40pt]
	\item lower semicontinuous, if $(y,z)\mapsto g(y,z)$ is lower semicontinuous,\label{lsc}
\end{enumerate}
\begin{enumerate}[label=(\textsc{Nor}),leftmargin=40pt]
	\item normalized, if $g(y,0)=0$ for all $y \in \mathbb{R}_+$\label{nor},
\end{enumerate}
\begin{enumerate}[label=(\textsc{Pos}),leftmargin=40pt]
	\item positive, if $g\geq 0$.\label{pos}
\end{enumerate}
Given a random variable $H \in L^0$, a subsolution of the BSDE with generator $g$ and terminal condition $H$ is a pair $(Y, Z)$ of processes satisfying
\begin{equation}
\label{eq:subsol}
Y_s + \int_s^tg_u(Y_u, Z_u)\,du - \int_s^tZ_u\,dW_u \leq Y_t;\quad Y_T \leq H,
\end{equation}
for all $0\leq s\leq t\leq T$. 
Let $u:\mathbb{R}_+ \to \mathbb{R}$ be a continuous concave, strictly increasing function such that there exists $C>0$, $\abs{u(x)}^{p^2} \le C(1+ \abs{x})$ for every $x>0$, with $p$ introduced in the condition (A) and such that $L\mapsto u^{-1}(E[u(L)])$ is concave on $\set{L \in L^0_+: E[u(L)]< +\infty}$. 
Examples of such a function include $u(x) = x^r$ with $ rp^2<1$, and $u(x) = -\exp(-rx)$ with $r>0$, see \cite[Section 3]{Dra-Kup}.

A value process $Y \in {\cal S}_+$ is said to be admissible if the process
%\footnote{Although at this point it is not clear that the value process $Y$ is positive, so that $Y^r$ defines a real valued process, we will show later that if $H\ge 0$, we can restrict our study to positive processes.} 
 $u(Y)$ is a submartingale.
We consider the operator
\begin{equation*}
	\mathcal{E}^g_0(H) := \sup\Set{ Y_0: (Y, Z) \in \mathcal{A}^u(H, g) }
\end{equation*}
with
\begin{equation*}
	\mathcal{A}^u(H, g) := \Set{ (Y,Z)\in \mathcal{S}\times \mathcal{L}:Y \text{ admissible and \eqref{eq:subsol} holds} },
\end{equation*}
the set of admissible subsolutions with respect to $u$. 
The reader will notice that the operator $\mathcal{E}^g_0(\cdot)$ depends	 on $u$. 
Similar to \cite{DHK1101} the operator $\mathcal{E}^g_0(\cdot)$ is a nonlinear utility function. 
In particular, it is monotone, concave but not necessarily cash-additive.
We study the investment problem
\begin{equation}
\label{eq:optprob}
	V(x) := \sup_{\pi \in \Pi}\mathcal{E}^g_0(\xi + X^{\pi}_T).
\end{equation}
More precisely, we would like to give conditions of existence of a pair $(\bar{Y}, \bar{Z})$ along with a trading strategy $\bar{\pi} \in \Pi$ such that $(\bar{Y}, \bar{Z}) \in \mathcal{A}^u(\xi+X^{\bar{\pi}}_T,g)$ and  for any other trading strategy $\pi \in \Pi$ one has
\begin{equation*}
	\bar{Y}_0 = V(x) = \mathcal{E}^g_0(\xi + X^{\bar{\pi}}_T) \geq \mathcal{E}^g_0(\xi + X^{\pi}_T).
\end{equation*}
Henceforth, the function $V$ will be referred to as the value function of the optimization problem \eqref{eq:optprob}, and the triple $(\bar{X}, \bar{Y}, \bar{Z}) $ with $\bar{X} = X^{\bar{\pi}}$, a maximal subsolution.

\begin{example}
\label{example1}
\begin{enumerate}[label= \textbf{\arabic*.},fullwidth]
\item Certainty equivalent:
	%Let $U$ be an increasing concave function which is twice differentiable.
	Let $X$ be an $\mathcal{F}_T$-measurable random variable such that $u(X)$ is integrable.
	The certainty equivalent $C_t(X)$ of $X$ is defined as $C_t(X) := u^{-1}\left( E[u(X)\mid \mathcal{F}_t] \right)$, $t\in [0,T]$.
%	It is well known that $C_t$ is a dynamic utility function. 
	Consider the utility maximization problem
	\begin{equation}
	\label{eq:prob-certainty-equiv}
		V(x) = \sup_{\pi \in \Pi} C_0(X^\pi_T + \xi).
	\end{equation} 
	The martingale representation theorem yields a process $N \in {\cal L}^1$ such that 
	\begin{equation*}
		E[u(X)\mid \mathcal{F}_t] = E[u(X)] + \int_0^tN_u\,dW_u,\quad \text{for all $t\in [0,T]$}.
	\end{equation*}
	Applying It\^o's formula to $Y_t = u^{-1}\left( E[u(X)\mid \mathcal{F}_t] \right)$, we have 
	\begin{equation*}
		dY_t = \frac{1}{u^\prime(Y_t)}N_t\,dW_t - \frac{1}{2}\frac{u^{\prime \prime}(Y_t)}{(u^\prime(Y_t))^3}\abs{N_t}^2\,dt.
	\end{equation*}
	Hence, putting $Z_t = \frac{1}{u'(Y_t)}N_t$, the pair $(Y,Z)$ solves the BSDE
	\begin{equation}
	\label{eq:u_concave}
		Y_t = X + \frac{1}{2}\int_t^T\frac{u''(Y_u)}{u'(Y_u)}\abs{Z_u}^2\,du - \int_t^TZ_u\,dW_u.
	\end{equation}
%	We say that a subsolution $(Y,Z)$ of the BSDE \eqref{eq:u_concave} is admissible if the process $U(Y)$ is a submartingale, and we denote by $\mathcal{E}^g(X)$ the value process of the maximal subsolution, if it exists.
	For $u(x) = x^r$, $r \in (0,1)$, the generator of the BSDE \eqref{eq:u_concave} is given by $g(y,z) = \frac{1}{2}(r -1)\abs{z}^2/y$ and satisfies the conditions \ref{jconv}, \ref{lsc}, \ref{nor} and \ref{pos} on $(0,+ \infty)\times\mathbb{R}^d$.
	By definition, we have $\mathcal{E}^g_0(X) \ge C_0(X) $.
	In addition, the admissibility condition implies $u(\mathcal{E}^g_0(X)) \le E\left[ u(\mathcal{E}^g_T(X)) \right] $.
	Therefore, $\mathcal{E}^g_0(X) \le C_0(X) $.
	Thus, the utility maximization problem \eqref{eq:prob-certainty-equiv} can be rewritten as $V(x) = \sup_{\pi \in \Pi} \mathcal{E}^g_0(X^\pi_T + \xi) $.
	\item $g$-expectation: Let $u$ be a utility function and $g$ a function defined on $\mathbb{R}\times \mathbb{R}^d$ and satisfying \ref{lsc}, \ref{nor} and \ref{pos} such that for every $\pi \in \Pi$ the BSDE with terminal condition $u(X^\pi_T+\xi)$ and generator $g$ has a unique solution $(Y^\pi, Z^\pi) \in {\cal S}\times {\cal L}^2$.
	Denote by ${\cal E}_g[u(X^\pi_T + \xi)\mid {\cal F}_t] := Y^\pi_t$ the $g$-expectation of $u(X^\pi_T + \xi)$.
	The operator ${\cal E}_g[\cdot]$ is a nonlinear expectation which coincides with the classical expectation $E_P[\cdot]$ when $g = 0$.
	Consider the utility maximization problem
	\begin{equation*}
		V(x) := \sup_{\pi \in \Pi}u^{-1}\left( \mathcal{E}_g[u(\xi + X^{\pi}_T)\mid {\cal F}_0] \right).
	\end{equation*}
	We further assume $u$ to be twice continuously differentiable and that $u'$ is bounded away from zero.
	For every $\pi \in \Pi$, we have 
	\begin{equation}
	\label{eq:g_expect}
		Y^\pi_t =  u(X^\pi_T + \xi)+\int_t^T g(Y^\pi_u, Z^\pi_u)\,du -\int_t^T Z^\pi_u\,dW_u.
	\end{equation}
	Applying It\^o's formula to $\hat{Y}^\pi_t:= u^{-1}(Y^\pi_t)$, we obtain
	\begin{equation}
	\label{eq:gtilde_expect}
		d\hat{Y}^\pi_t = -\left\{ \frac{1}{u'(\hat{Y}^\pi_t)} g(u(\hat{Y}^\pi_t), \hat{Z}^\pi_tu'(\hat{Y}^\pi_t)) - \frac{1}{2}\frac{u''(\hat{Y}^\pi_t)}{u'(\hat{Y}^\pi_t)}\abs{\hat{Z}^\pi_t}^2  \right\}dt + \hat{Z}^\pi_t\,dW_t,
	\end{equation}
	with $\hat{Z}^\pi_t = Z^\pi_t/u'(Y^\pi_t)$ and $\hat{Y}^\pi_T = X^\pi_T + \xi$.
	For $u(x) = -\exp(-rx)$, $r > 0$ and $g(y,z)=\abs{z}$, the generator of the above BSDE takes the form $\hat{g}(y,z) = \abs{z} + \frac{1}{2}(1-r)r^2\abs{z}^2$ % \frac{1}{r}\exp(ry)g(-\exp(-ry), r\exp(-ry)z) + \frac{1}{2}r^2z^2 $ 
	and it satisfies the properties \ref{jconv}, \ref{lsc}, \ref{nor} and \ref{pos}.
	%We say that a subsolution $(Y,Z)$ of the BSDE \eqref{eq:gtilde_expect} is admissible if the process $U(Y)$ is a submartingale, and we denote by $\mathcal{E}^{\hat{g}}(X^\pi_T + \xi)$ the value process of the maximal subsolution, if it exists.
	Since $g$ is positive, $Y^\pi$ is a submartingale and we have $\mathcal{E}^{\hat{g}}_0(X^\pi_T + \xi) \ge \hat{Y}^\pi_0 = u^{-1}(Y^\pi_0) = u^{-1}({\cal E}_g(u(X^\pi_T + \xi)\mid {\cal F}_0)) $.
	In addition, the admissibility condition implies $u(\mathcal{E}^{\hat{g}}_0(X^\pi_T + \xi)) \le E[ u(\mathcal{E}^{\hat{g}}_T(X^\pi_T+ \xi)) ] \le E[u(X^\pi_T+ \xi)] $ by monotonicity of $u$.
	Since $g$ is positive, taking expectation of both sides of \eqref{eq:g_expect} yields ${\cal E}_g(u(X^\pi_T + \xi)\mid {\cal F}_0) \ge E[u(X^\pi_T + \xi)] $.
	Therefore, $\mathcal{E}^{\hat{g}}_0(X^\pi_T + \xi) \le u^{-1}({\cal E}_g(u(X^\pi_T + \xi)\mid {\cal F}_0))  $.
	Thus, the utility maximization problem \eqref{eq:prob-certainty-equiv} can be rewritten as $V(x) = \sup_{\pi \in \Pi} \mathcal{E}^{\hat{g}}_0(X^\pi_T + \xi) $.
\end{enumerate}

\end{example}
%!TEX root = optimierung.tex
\section{Maximal Subsolutions}
\label{sec:existence}

 \subsection{Existence Results}
In this section we give sufficient conditions of existence of an optimal trading strategy to Problem \eqref{eq:optprob}. 
In order to simplify the presentation, let us introduce the set%\footnote{It is clear that each element $(X, Y, Z)$ of $\mathcal{A}(x)$ depends on a trading strategy $\pi$.}
\begin{equation*}
	\mathcal{A}(x) := \Set{ (X, Y, Z): X \text{ satisfies \eqref{eq:wealth} for some $\pi \in \Pi$ and } (Y, Z) \in \mathcal{A}^u(\xi + X_T,g) }.
\end{equation*}
The function $V(x)$ can be written as $$V(x) = \sup\{Y_0: (X, Y, Z) \in {\cal A}(x)\}.$$
If $g$ satisfies (\textsc{Nor}) and $\xi \geq 0$, the set $\mathcal{A}(x)$ is nonempty, and contains an element with positive value process. 
The triplet $(X^0, Y^0, Z^0)$, with $Z^0 = 0$, $Y^0=X^0 = x$ and  with associated trading strategy $\pi = 0$ is an element of $\mathcal{A}(x)$. Indeed, the pair $(Y^0, Z^0)$ satisfies \eqref{eq:subsol}, and we have $Y^0_T = x \leq x +\xi=  \xi + X^0_T$. 
Moreover, for all $(X, Y, Z) \in \mathcal{A}(x)$ the c\`adl\`ag process $Y$ can jump only up, since by taking the limit as $s$ tends to $t-$ in Equation \eqref{eq:subsol} we have $Y_t\geq Y_{t-}$, for all $t\in [0,T]$. 
Before stating our existence result, let us prove the following lemmas.
\begin{lemma}
\label{estimates_u}
	Assume $\xi \in L^1_+(\Omega, {\cal F}_T, Q)$. 
	Then there exists a constant $C \geq 0$ such that for all $(X, Y, Z)\in \mathcal{A}(x)$ with $Y\ge 0$, we have
	\begin{equation*}
		E[\abs{u(\xi + X_T)}^{p}] \leq C\quad \text{and}  \quad		u(Y_t) \leq E[u(\xi + X_T)\mid \mathcal{F}_t]\quad t\in [0,T].
	\end{equation*}
\end{lemma}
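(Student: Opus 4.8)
The plan is to treat the two assertions separately; the second is an immediate consequence of admissibility, while the first carries the real work and is needed to make the second meaningful. For the inequality $u(Y_t)\le E[u(\xi+X_T)\mid\mathcal{F}_t]$ I would argue as follows. Since $(X,Y,Z)\in\mathcal{A}(x)$, the value process $Y$ is admissible, so $u(Y)$ is a submartingale and in particular $u(Y_t)\le E[u(Y_T)\mid\mathcal{F}_t]$ for every $t\in[0,T]$. The subsolution property \eqref{eq:subsol} gives the terminal constraint $Y_T\le \xi+X_T$, and since $u$ is increasing this yields $u(Y_T)\le u(\xi+X_T)$ pathwise. Combining the two inequalities gives the claim, provided $u(\xi+X_T)$ is integrable — which is exactly the content of the first assertion (and then $u(Y_T)\in[u(0),u(\xi+X_T)]$ is integrable as well, using $Y\ge 0$).

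For the bound $E[\abs{u(\xi+X_T)}^p]\le C$, the first step is to exploit the growth condition on $u$. Since $\xi\ge 0$ and $X_T=X^\pi_T\ge 0$, we have $\xi+X_T\ge 0$, and $\abs{u(x)}^{p^2}\le C(1+\abs{x})$ gives $\abs{u(\xi+X_T)}^p\le C^{1/p}(1+\xi+X_T)^{1/p}$. It therefore suffices to bound $E[(1+\xi+X_T)^{1/p}]$ by a constant independent of the strategy. The exponent $p^2$ in the growth assumption is tuned precisely so that, after taking the $p$-th power of $\abs{u}$, the remaining power is $1/p$, which is exactly what matches the Muckenhoupt exponent $1/(p-1)$ after one application of Hölder's inequality.

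The core of the argument is then a change of measure to $Q$ combined with Hölder's inequality. Writing $D_T=\tfrac{dQ}{dP}=\mathcal{E}(-\int\theta\,dW)_T$, I would pass to $Q$ and split the two factors with exponents $p$ and $p/(p-1)$:
\[
E\big[(1+\xi+X_T)^{1/p}\big]=E_Q\big[D_T^{-1}(1+\xi+X_T)^{1/p}\big]\le \big(E_Q[1+\xi+X_T]\big)^{1/p}\,\big(E[D_T^{-1/(p-1)}]\big)^{(p-1)/p}.
\]
The first factor is controlled uniformly over $\mathcal{A}(x)$: since $X^\pi$ is a positive $Q$-local martingale, hence a $Q$-supermartingale, one has $E_Q[X_T]\le x$, and together with $\xi\in L^1_+(\Omega,\mathcal{F}_T,Q)$ this gives $E_Q[1+\xi+X_T]\le 1+E_Q[\xi]+x$, a bound depending on $x$ and $\xi$ but not on $\pi$. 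This uniformity is precisely why the final constant $C$ can be taken independent of $(X,Y,Z)$.

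The main obstacle is the density factor $E[D_T^{-1/(p-1)}]$, and this is where the standing assumption \eqref{assump} enters. The reverse Hölder inequality for the density $D$ of $Q$ furnishes a finite bound for this moment depending only on $C_\theta$ and $p$; concretely, \eqref{assump} states that $\int\theta\,dW$ is BMO, hence so is $-\int\theta\,dW$, and the Muckenhoupt $A_p$/reverse Hölder property for $\mathcal{E}(-\int\theta\,dW)=D$ follows. The delicate point to get right is the bookkeeping of exponents (the passage $p^2\rightsquigarrow 1/p\rightsquigarrow 1/(p-1)$) and checking that the power of the density isolated by Hölder is exactly the one controlled by the reverse Hölder inequality. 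Assembling the three ingredients yields $E[\abs{u(\xi+X_T)}^p]\le C$ uniformly over $\mathcal{A}(x)$, and feeding this integrability back into the submartingale argument completes the proof.
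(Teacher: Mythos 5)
Your argument is correct and is essentially the paper's proof: the same H\"older split under $Q$ with conjugate exponents $p$ and $q=p/(p-1)$, the same reduction of the wealth factor via the $Q$-supermartingale property $E_Q[X_T]\le x$ together with $\xi\in L^1_+(Q)$, and the same admissibility-plus-monotonicity argument for the second estimate. The only cosmetic differences are that you apply the growth condition on $u$ before rather than after H\"older, and that the density moment $E[(dQ/dP)^{-1/(p-1)}]$ you isolate is exactly the standing assumption \eqref{assump} evaluated at $\tau=0$ (since $q-1=1/(p-1)$), so the detour through BMO and reverse H\"older is unnecessary --- and in fact slightly misleading, as reverse H\"older controls positive moments of the density whereas the negative moment needed here is precisely the Muckenhoupt bound itself.
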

\begin{proof}
	Let $(X, Y, Z)$ be in $\mathcal{A}(x)$, and $q$ the H\"older conjugate of $p$. 
	We first prove the $L^{p}$ boundedness of $u(\xi + X_T)$.
	Using H\"older's inequality, we estimate as follows:
	\begin{align}
		\nonumber E[\abs{u(\xi + X_T)}^{p}] &= E_Q\left[\frac{1}{\mathcal{E}(\int\theta\,dW)_T}\abs{u(\xi + X_T)}^{p}\right]\\
		\nonumber                  &\leq E_Q\left[\left( \frac{1}{\mathcal{E}(\int\theta\,dW)_T} \right)^{q}\right]^{\frac{1}{q}}E_Q[\abs{u(\xi + X_T)}^{p^2}]^{\frac{1}{p}}.
	\end{align}
	Since there exists a positive constant $C$ such that 
	\begin{equation*}
		\abs{u(\xi + X_T)}^{p^2} \leq C(1 + \xi + X_T),
	\end{equation*}
	we have
	\begin{equation*}
		E[\abs{u(\xi + X_T)}^{p}] \leq C^{1/p}E\left[\mathcal{E}\left( \int\theta\,dW \right)_T\left( \frac{1}{\mathcal{E}(\int\theta\,dW)_T} \right)^{q} \right]^{\frac{1}{q}}E_Q[1+ \xi + X_T]^{\frac{1}{p}}.
	\end{equation*}
	Thus, since $q - 1 = \frac{1}{p - 1}$, it follows from the Muckenhoupt $A_{p}$ condition and the $Q$-supermartingale property of $X$, that
	\begin{equation*}
		E\left[ \abs{u(\xi + X_T)}^{p} \right] \leq C^{1/p}C_\theta^{1/q}(1+ E_Q[\xi] + x)^{\frac{1}{p}},
	\end{equation*}
	hence the first estimate.

	For the second estimate, first notice that $u(\xi + X_T)$ is integrable, and since $u$ is increasing and $(Y, Z)$ satisfies Equation \eqref{eq:subsol}, we have $u(Y_T) \leq u(\xi + X_T)$. 
	Since the value process $Y$ is admissible, we have $u(Y_t) \leq E[u(Y_T)\mid \mathcal{F}_t] \leq E[u(\xi + X_T) \mid \mathcal{F}_t]$ for all $t \in [0,T]$. 
\end{proof}
The previous lemma gives two \emph{a priori} estimates for subsolutions of Equation \eqref{eq:subsol}. In particular, it shows that the family of random variables $u(\xi + X_T)$, when $(X, Y, Z)$ runs through $\mathcal{A}(x)$, is uniformly integrable.
\begin{remark}
\begin{itemize}[fullwidth]
\item[a)]	Due to the admissibility condition and the previous lemma, it holds $V(x) \in \mathbb{R}$ for every $x>0$. 
	In fact, for any $(X, Y, Z) \in \mathcal{A}(x)$, since $(x,x,0) \in {\cal A}$, we can assume $Y_0\ge x$.
	By admissibility,
	\begin{equation*}
		u(Y_0) \leq E[u(Y_T)] \leq E[u(\xi + X_T)].
	\end{equation*}
	Lemma \ref{estimates_u} and Jensen's inequality give 
	\begin{equation*}
		u(Y_0)^p \leq E[\abs{u(\xi + X_T)}^p] \leq C.
	\end{equation*}
\item[b)]	If a subsolution $(X, Y, Z) \in {\cal A}(x)$ is such that $\log(Y)$ is a submartingale, then since $Y_0\ge x$, we have $E[\log(Y_t)] \ge \log(x) > 0$ for all $t \in [0,T]$.
	Hence, $Y_t=0$ with probability zero. 
	Therefore, the function $u = \log$ can be used to defined admissibility of subsolutions. 
\end{itemize}
\end{remark}
 The next lemma describes the set of subsolutions.
\begin{lemma}
\label{convex_adm}
	If $g$ satisfies \ref{jconv}, then the set $\mathcal{A}(x)$ is convex.
\end{lemma}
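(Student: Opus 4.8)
The plan is to show convexity of $\mathcal{A}(x)$ by verifying that a convex combination of two triplets in $\mathcal{A}(x)$ again lies in $\mathcal{A}(x)$. Let $(X^1, Y^1, Z^1)$ and $(X^2, Y^2, Z^2)$ be in $\mathcal{A}(x)$ with associated strategies $\pi^1$ and $\pi^2$, fix $\lambda \in [0,1]$, and set $X^\lambda := \lambda X^1 + (1-\lambda) X^2$, and analogously $Y^\lambda$ and $Z^\lambda$. First I would check the wealth/strategy part: because the wealth dynamics \eqref{eq:wealth} are affine in $\pi$ and share the same initial capital $x$, the process $X^\lambda$ satisfies \eqref{eq:wealth} with strategy $\pi^\lambda := \lambda \pi^1 + (1-\lambda)\pi^2$. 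The integrability $\pi^\lambda \sigma \in \mathcal{L}(Q)$ follows from linearity of the stochastic integral, and positivity $X^\lambda \geq 0$ is immediate as a convex combination of nonnegative processes, so $\pi^\lambda \in \Pi$ and $X^\lambda = X^{\pi^\lambda}$.

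Next I would verify that $(Y^\lambda, Z^\lambda) \in \mathcal{A}^u(\xi + X^\lambda_T, g)$. The subsolution inequality \eqref{eq:subsol} is the key point where convexity of $g$ enters. Writing the inequality for each $(Y^i, Z^i)$, multiplying by $\lambda$ and $1-\lambda$ respectively, and adding, the linear terms $Y$, $\int Z\,dW$, and the boundary pieces combine exactly into their convex-combination counterparts; for the generator term, Jensen's inequality applied pointwise via \ref{jconv} gives
\begin{equation*}
g_u(Y^\lambda_u, Z^\lambda_u) \leq \lambda g_u(Y^1_u, Z^1_u) + (1-\lambda) g_u(Y^2_u, Z^2_u),
\end{equation*}
which points in the right direction to preserve the inequality \eqref{eq:subsol}. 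The terminal condition $Y^\lambda_T \leq \xi + X^\lambda_T$ follows by taking the convex combination of $Y^i_T \leq \xi + X^i_T$, noting that $\lambda \xi + (1-\lambda)\xi = \xi$.

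Finally I would address admissibility, i.e.\ that $u(Y^\lambda)$ is a submartingale. This is where the structural hypotheses on $u$ matter rather than convexity of $g$. The natural route is to use that $L \mapsto u^{-1}(E[u(L)])$ is concave, equivalently that the certainty-equivalent functional $u^{-1}(E[u(\cdot)\mid \mathcal{F}_s])$ is concave in the conditional sense; combined with the submartingale property of $u(Y^1)$ and $u(Y^2)$ and monotonicity of $u$, one obtains
\begin{equation*}
u(Y^\lambda_s) \leq u\big(\lambda Y^1_s + (1-\lambda) Y^2_s\big) \leq E[u(Y^\lambda_t)\mid \mathcal{F}_s]
\end{equation*}
for $s \leq t$, after translating the concavity of the certainty equivalent into the desired submartingale inequality. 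I expect this admissibility step to be the main obstacle: convexity of $g$ handles the subsolution inequality almost mechanically, but preserving the submartingale property of $u(Y^\lambda)$ genuinely relies on the concavity assumption on $L \mapsto u^{-1}(E[u(L)])$, and care is needed to apply it in the conditional form and to reconcile the direction of the inequalities with the concavity of $u$ itself.
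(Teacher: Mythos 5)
Your proposal is correct and follows essentially the same route as the paper's proof: convexity of $g$ for the subsolution inequality, a direct convex combination for the terminal condition and the wealth dynamics, and the concavity of $L\mapsto u^{-1}(E[u(L)\mid\mathcal{F}_s])$ together with admissibility of $Y^1,Y^2$ and monotonicity of $u$ for the submartingale property of $u(Y^\lambda)$ (note only that your first inequality $u(Y^\lambda_s)\le u(\lambda Y^1_s+(1-\lambda)Y^2_s)$ is in fact an equality by definition of $Y^\lambda$). The admissibility step you flag as the main obstacle is handled in the paper exactly as you describe.
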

\begin{proof}
See Appendix \ref{appendix}.
\end{proof}
The following existence theorem is the first main result of this paper.
\begin{theorem}
\label{existence}
	Assume that the generator $g$ satisfies \ref{jconv}, \ref{lsc}, \ref{nor} and \ref{pos}; and that the random endowment $\xi$ belongs to $L^{\infty}_+$. 
	Then there exists a trading strategy $\bar{\pi} \in \Pi$ with associated wealth process $\bar{X}$ and a pair $(\bar{Y}, \bar{Z}) \in \mathcal{A}^u(\xi + \bar{X}_T,g)$ such  that $\bar{Y}_0 = V(x)$.
\end{theorem}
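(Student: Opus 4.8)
The plan is to adapt the localization-and-compactness scheme for maximal subsolutions of convex BSDEs, in the spirit of \citet{DHK1101}, so as to also handle the trading component. I would start from a maximizing sequence $(X^n, Y^n, Z^n) \in \mathcal{A}(x)$ with $Y^n_0 \to V(x)$; by the remark following Lemma \ref{estimates_u} one may assume $Y^n_0 \ge x$, and admissibility forces each value process $Y^n$ to be nonnegative. Writing the subsolution inequality \eqref{eq:subsol} in differential form, every $Y^n$ admits a decomposition
\[
Y^n_t = Y^n_0 - \int_0^t g_u(Y^n_u, Z^n_u)\,du + \int_0^t Z^n_u\,dW_u + K^n_t,
\]
with $K^n$ \cadlag{} increasing, $K^n_0 = 0$, and terminal constraint $Y^n_T \le \xi + X^n_T$.

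The first step is to extract uniform estimates. Lemma \ref{estimates_u} bounds $u(\xi + X^n_T)$ in $L^p$ uniformly in $n$, so that the family $\{u(\xi + X^n_T)\}$ is uniformly integrable and each value process is dominated by the uniformly $L^p$-bounded martingale through $u(Y^n_t) \le E[u(\xi + X^n_T)\mid \mathcal{F}_t]$. Since $g \ge 0$ by \ref{pos}, these bounds propagate through the decomposition above to control the running maxima $(Y^n)^*$, the local martingales $\int Z^n\,dW$, and the increasing processes $K^n$. Crucially, the uniform integrability is exactly what lets one build a localizing sequence of stopping times $(\tau^k)$ increasing to $T$, uniformly in $n$ in the sense that $\sup_n P[\tau^k < T] \to 0$, on each interval $[0, \tau^k]$ of which $\int Z^n\,dW$ and $K^n$ are bounded in the norms demanded by the compactness results.

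By Lemma \ref{convex_adm} the set $\mathcal{A}(x)$ is convex, so I may pass freely to convex combinations of triples without leaving $\mathcal{A}(x)$. Applying the compactness principle of \citet{Del-Sch96} to the martingale parts on each $[0, \tau^k]$, a Helly-type selection to the increasing parts $K^n$, and a Koml\'os-type extraction to the terminal wealths, and then diagonalizing over $k$, I would obtain a sequence $(\tilde X^n, \tilde Y^n, \tilde Z^n)$ in the asymptotic convex hull for which $\int \tilde Z^n\,dW \to \int \bar Z\,dW$, $\tilde K^n \to \bar K$ increasing, $\tilde X^n_T \to \bar\chi$ almost surely, and $\tilde Y^n_0 \to V(x)$. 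Using that each $X^n$ is a positive $Q$-supermartingale with $E_Q[X^n_T] \le x$ and the no-arbitrage structure under $Q$, the set of attainable nonnegative terminal wealths is closed under this convergence, so $\bar\chi \le \bar X_T$ for some $\bar\pi \in \Pi$ with wealth process $\bar X = X^{\bar\pi}$; this produces the candidate terminal condition $\xi + \bar X_T$.

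It then remains to verify that the limit triple $(\bar X, \bar Y, \bar Z)$, with $\bar Y$ recovered from the limiting decomposition, lies in $\mathcal{A}(x)$ and attains $\bar Y_0 = V(x)$. This is where \ref{jconv} and \ref{lsc} enter: joint convexity gives $g_u(\tilde Y^n_u, \tilde Z^n_u) \le \sum_i \lambda^n_i\, g_u(Y^i_u, Z^i_u)$, so the drift of a convex combination is dominated by the convex combination of the original (integrable) drifts, while lower semicontinuity yields $g_u(\bar Y_u, \bar Z_u) \le \liminf_n g_u(\tilde Y^n_u, \tilde Z^n_u)$ pointwise; Fatou's lemma then transfers the inequality \eqref{eq:subsol} to the limit and shows $\bar Y$ is a subsolution with $\bar Y_T \le \xi + \bar X_T$. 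Admissibility of $\bar Y$, i.e. the submartingale property of $u(\bar Y)$, follows from the same limiting argument together with the domination by the $L^p$-bounded martingale, and membership $\bar Y \in \mathcal{S}$, $\bar Z \in \mathcal{L}$ from the compactness. Finally $\bar Y_0 \ge \lim_n \tilde Y^n_0 = V(x)$, while $\bar Y_0 \le V(x)$ by definition, giving equality. The hard part will be the simultaneous compactness of the two heterogeneous pieces — the trading/wealth part and the BSDE part $(Y, Z, K)$ — and, above all, showing that the limiting terminal wealth stays attainable by an admissible strategy, which requires the set of wealth processes to be closed in the mode of convergence delivered by the compactness arguments.
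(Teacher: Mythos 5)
Your backward-component argument is essentially the paper's: maximizing sequence, a priori estimates from Lemma \ref{estimates_u}, localization, the $\mathcal{L}^1$-compactness of \citet{Del-Sch96} plus Helly for the increasing parts, and verification via \ref{lsc}, \ref{jconv} and Fatou. Two points, however, are genuine gaps rather than omitted routine detail. First, the localization mechanism is only gestured at: the uniform integrability of $u(\xi+X^n_T)$ controls $u(Y^n)$, not $Y^n$, and to run Doob--Meyer and identify the martingale part with $-\int Z^n\,dW$ on $[0,\tau^k]$ you must first upgrade the submartingale property from $u(\tilde Y^{n})$ to the stopped process $\tilde Y^{n}_{\cdot\wedge\tau^k}$ itself. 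The paper does this via Jensen's inequality using the convexity of $u^{-1}$, after choosing the subsequence so that $E[\vert M^{n,k}_T-M_T\vert]\le 2^{-n}/k$, which is exactly what makes $(\tau^k)$ a localizing sequence uniformly in $n$. Without this step the $L^1$ bound on the increasing parts and the $\mathcal{L}^1$ bound on $\tilde Z^{n}1_{[0,\tau^k]}$ do not follow from the decomposition alone.

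Second, and more seriously, you leave the attainability of the limiting wealth to an unproved closedness assertion. Note that $E_Q[\bar\chi]\le x$ under the single measure $Q$ does not by itself give $\bar\chi\le X^{\bar\pi}_T$ in this incomplete market; you would need the Fatou-closedness of the set of admissible terminal wealths (a nontrivial Delbaen--Schachermayer-type theorem you neither cite nor prove), and you explicitly flag this as unresolved. The paper closes this gap constructively by a preliminary modification you omit: stopping each triple at $\delta^n=\inf\{t:Y^n_t\le X^n_t\}$ and freezing thereafter, which keeps the triple in $\mathcal{A}(x)$ and forces $0\le X^n\le Y^n$. This coupling means the localizing times $\tau^k$ built from the value processes also bound the wealth processes, so $(\tilde\nu^{n,k}1_{[0,\tau^k]})_n$ is bounded in $\mathcal{L}^1(Q)$ and the \emph{same} compactness theorem applied to the $Q$-martingales $\tilde X^n$ produces $\bar\nu$ directly; the full rank of $\sigma\sigma'$ then recovers $\bar\pi$ with $\bar\pi\sigma=\bar\nu$. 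Either supply this stopping-time transformation or a precise citation and verification of the Fatou-closedness result; as written, the forward part of your argument is incomplete.
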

\begin{proof}
	Let $((X^n, Y^n, Z^n))$ be a sequence in $\mathcal{A}(x)$ such that $Y^n_0 \uparrow V(x)$. 
	The proof goes in several steps. We start by making some transformations on the maximizing sequence $((X^n, Y^n, Z^n))$.
	\begin{enumerate}[label=\textit{Step \arabic*},fullwidth]
		\item \emph{ Preliminary transformations.} \label{step:preliminary}
		The sequence $((X^n, Y^n, Z^n))$ can be considered to be such that for all $n \in \mathbb{N}$, $Y^n_0\geq x$ and $Y^n\geq X^n$. 
		In fact, since the set $\mathcal{A}(x)$ contains the triple $(x,x,0)$, by definition of $V(x)$ it holds $V(x) \geq x$. 
		Hence, we can assume without loss of generality that $Y^n_0 \geq x$, for all $n$. For each $n\in \mathbb{N}$, define the stopping time $\delta^n$ by
		\begin{equation*}
			\delta^n := \inf\{t \geq 0: Y^n_t \leq X^n_t\}\wedge T,
		\end{equation*}
		and put
		\begin{equation*}
			\hat{Y}^n := Y^n1_{[0,\delta^n)}+ Y^n_{\delta^n}1_{[\delta^n,T]}; \quad \hat{Z}^n:= Z^n1_{[0,\delta^n]}
		\end{equation*}
		and
		\begin{equation*}
			\hat{X}^n := X^n1_{[0,\delta^n]} + X^n_{\delta^n}1_{[\delta^n , T]}.
		\end{equation*}
		The triple $(\hat{X}^n, \hat{Y}^n, \hat{Z}^n)$ belongs to $\mathcal{A}(x)$. 
		In fact, for all $s,t\in [0,T]$ with $0\leq s\leq t\leq T$, on the set $\{s\leq \delta^n\leq t\}$ we have
		\begin{align*}
			\hat{Y}^n_s + \int_s^t &g_u(\hat{Y}^n_u, \hat{Z}^n_u)\,du - \int_s^t\hat{Z}^n_u\,dW_u\\
				&= Y^n_s + \int_s^{\delta^n}g_u(Y^n_u, Z^n_u)\,du - \int_s^{\delta^n}Z^n_u\,dW_u + \int_{\delta^n}^tg_u(Y^n_{\delta^n}, 0)\,du \\
				& \leq Y^n_{\delta^n} = \hat{Y}^n_{\delta^n}.              
		\end{align*}
		On the sets $ \{s\geq \delta^n\}$ and $\{t\leq \delta^n\}$ the proof is the same. 
		Now for the forward process, let $t\in [0,T]$. 
		On the set $\{\delta^n\leq t\}$, putting $\hat{\pi}^n := \pi^n1_{[0,\delta^n]}$, we have
		\begin{align*}
			\hat{X}^n_t &= X^n_{\delta^n} = x + \int_0^{\delta^n}X^n_u\pi^n_u\sigma_u\,dW^Q_u + \int_{\delta^n}^t0\,dW^Q_u
 			= x + \int_0^{t}\hat{X}^n_u\hat{\pi}^n_u\sigma_u\,dW^Q_u.
		\end{align*}
 		On $\{t\leq \delta^n\}$ there is nothing to prove. 
 		In order to show that the terminal condition is satisfied, notice that on the set $\{\delta^n < T\}$ it holds $Y^n_{\delta^n} = X^n_{\delta^n}$. 
 		This is because $Y^n_0 \geq x$, $X^n$ is continuous and $Y^n$ only jumps upward. 
 		Thus, 
		\begin{equation*}
			\hat{Y}^n_T = Y^n_{\delta^n} = X^n_{\delta^n} \leq X^n_{\delta^n} + \xi = \hat{X}^n_{T} + \xi
		\end{equation*} 
		and on the set $\{\delta^n = T\}$ it holds 
		\begin{equation*}
			\hat{Y}^n_T = Y^n_T \leq \xi +X^n_T = \xi + \hat{X}^n_T.
		\end{equation*}
		In addition, for all $n\in \mathbb{N}$, $\hat{\pi}^n$ is a trading strategy and
		$u(\hat{Y}^n)$ is a $P$-submartingale. 
		In fact, for all $0\leq s\leq t\leq T$, due to the admissibility of $Y^n$, we have 
		\begin{equation*}
			E[u(\hat{Y}^n_t) - u(\hat{Y}^n_s)\mid \mathcal{F}_s] = E[u(Y^n_{(s\vee\delta^n)\wedge t}) - u(Y^n_s)\mid \mathcal{F}_s] \geq 0.
		\end{equation*}
		Hence $\hat{Y}^n$ is admissible. 
		Therefore, we have 
		\begin{equation*}
			((\hat{X}^n, \hat{Y}^n, \hat{Z}^n)) \subseteq \mathcal{A}(x)
		\end{equation*}
		with $\hat{Y}^n_0 \uparrow V(x)$ and for all $t\in [0,T]$, $\hat{X}^n_t \leq \hat{Y}^n_t$. 
		In the sequel of the proof we shall simply write $(X^n, Y^n,Z^n)$ for $(\hat{X}^n, \hat{Y}^n, \hat{Z}^n)$, for every $n \in \mathbb{N} $.
		\item \emph{An estimate for the value process.} \label{step:estimate}
		Now we provide a bound on the value process that will be a key ingredient for the localization in the subsequent step. 
		Since $(X^n_T)$ is a sequence of positive random variables, by \citep[Lemma A1.1]{Del-Sch94} there exists a sequence denoted $(\tilde{X}^n_T)$ in the asymptotic convex hull of $(X^n_T)$ and an $\mathcal{F}_T$-measurable random variable $X$ such that 
		\begin{equation*}
			\lim_{n\rightarrow \infty} \tilde{X}^n_T = X \quad \text{$Q$-a.s.}
		\end{equation*}
		Let $(\tilde{X}^n) $ be the sequence in the asymptotic convex hull associated to $(\tilde{X}^n_T) $.
		For each $n \in \mathbb{N}$ the process $\tilde{X}^n$ is positive and inherits the $Q$-supermartingale property of $X^n$, that is, $E_Q[\tilde{X}^n_T] \leq x$. 
		Hence, it follows from Fatou's lemma that 
		\begin{equation*}
			x \geq \liminf_{n \rightarrow \infty}E_Q[\tilde{X}^n_T]\geq E_Q[\liminf_{n\rightarrow \infty}\tilde{X}^n_T] = E_Q[X].
		\end{equation*}
		By continuity of the function $u$ and $Q$-almost sure convergence of $(\tilde{X}^n_T)$ it follows that $(u(\xi + \tilde{X}^n_T))$ converges to $u(\xi + X)$ $Q$-a.s., and therefore $P$-a.s. by equivalence of measures. 
		Moreover, due to Lemmas \ref{convex_adm} and \ref{estimates_u}, the family $(u(\xi + \tilde{X}^n_T))_{n}$ is uniformly integrable. 
		Therefore, we can conclude using the dominated convergence theorem that
		\begin{equation}
		\label{L1m}
			\lim_{n\rightarrow \infty} u(\xi +\tilde{X}^n_T) = u(\xi + X)\quad \text{in $L^1$}.
		\end{equation}
		For all $n\in \mathbb{N}$ and $t\in [0,T]$ define
		\begin{equation*}
			M^n_t := E[u(\xi + \tilde{X}^n_T)\mid \mathcal{F}_t] \quad \text{and}\quad M_t := E[u(\xi + X)\mid \mathcal{F}_t].
		\end{equation*}
		We denote by $((\tilde{X}^n, \tilde{Y}^n, \tilde{Z}^n))$ the sequence in the asymptotic convex hull of $((X^n, Y^n, Z^n))$ associated to $(\tilde{X}^n_T)$. 
		By Lemma \ref{convex_adm}, $((\tilde{X}^n, \tilde{Y}^n, \tilde{Z}^n)) \subseteq \mathcal{A}(x)$, and Lemma \ref{estimates_u} leads to 
		\begin{equation*}
			u(\tilde{Y}^n_t) \leq M_t^n \leq (M^n_.)^\ast_T \quad\text{for all $t\in [0,T]$},
		\end{equation*}
		which implies, since $u^{-1}$ is increasing, that $\tilde{Y}^n_t \leq u^{-1}((M^n_.)^\ast_T)$. 
		Thus, $(\tilde{Y}^n_.)^\ast_T \leq u^{-1}((M^n_.)^\ast_T)$; recall that $\tilde{Y}^n_t \geq \tilde{X}^n_t \geq 0$. 
		Using again the fact that $u^{-1}$ is increasing and the inequalities
		\begin{equation*}
			(M^n_.)^\ast_T \leq (M^n_. - M_. + M_.)^\ast_T \leq (M^n_. - M_.)^\ast_T + M^\ast_T
		\end{equation*}
		we finally have 
		\begin{equation*}
			(\tilde{Y}^n_.)^\ast_T \leq u^{-1}((M^n_. - M_.)^\ast_T + M^\ast_T).
		\end{equation*}
		\item \emph{Local bound for the control process.} 
		Here we obtain an estimate that will enable us to use a compactness argument for the space $\mathcal{L}^1$. 
		That estimate stems from the fact that $Y^n$ can be shown to be a local submartingale. 
		We start by introducing a localization of the value processes. 
		Since the sequence $(M^n_T)$ converges in $L^1$, for a given $k\in \mathbb{N}$ we may, and do, choose a subsequence $(M^{n,k})_n$ such that
		\begin{equation}
		\label{subseqM}
			E[\vert M^{n,k}_T-M_T\vert] \leq \frac{2^{-n}}{k} \quad  n\in \mathbb{N}.
		\end{equation}
		Let $((\tilde{X}^{n,k},\tilde{Y}^{n,k},\tilde{Z}^{n,k}))_n$ be the subsequence of $((\tilde{X}^n,\tilde{Y}^n,\tilde{Z}^n))_n$ associated to $(M^{n,k}_T)_n$. 
		Now, introduce the sequence of stopping times 
		\begin{equation*}
			\tau^k = \inf\Set{t\geq 0: (\tilde{Y}^{n,k}_.)^*_t \geq k, \,\,\text{for some $n\in \mathbb{N}$} } \wedge T.
		\end{equation*}
		Let us show that $(\tau^k)$ is in fact a localizing sequence.
		\begin{align}
			\nonumber P[\tau^k = T] &= P\left[ (\tilde{Y}^{n,k}_.)^*_T < k,\, \text{for all $n\in \mathbb{N}$} \right]\\
			\nonumber             &\geq P\left[ u^{-1}((M^{n,k}_. - M_.)^*_T + M^*_T) < k,\, \text{for all $n \in \mathbb{N}$} \right]\\
             %&  = P[(M^{n,k}_. - M_.)^*_T + M^*_T < u(k),\, \text{for all $n \in \mathbb{N}$}]\\
			\nonumber             &= 1 - P\left[(M^{n,k}_. - M_.)^*_T + M^*_T \geq u(k),\, \text{for some $n\in \mathbb{N}$} \right]\\
			\nonumber             & \geq 1 - P\left[ \Set{(M^{n,k}_. - M_.)^*_T \geq  1,\, \text{for some $n\in\mathbb{N}$}}\cup \Set{(M_.)^*_T > u(k)-1} \right]\\
			\nonumber             &= 1 - P\left[(M^{n,k}_. - M_.)^*_T \geq  1,\,\text{for some $n\in\mathbb{N}$}\right] - P\left[(M_.)^*_T > u(k)-1 \right]\\
			\nonumber             &\geq 1 - \sum_{n}P\left[ (M^{n,k}_. - M_.)^*_T \geq 1 \right] - P\left[ (M_.)^*_T > u(k)-1) \right]\\
			\label{Doob_Markov}             &\geq 1 - \sum_{n}E\left[ \vert M^{n,k}_T -M_T\vert \right]-\frac{E\left[(M_.)^*_T \right]}{u(k)-1}\\
			\nonumber             &\geq 1 - \frac{1}{k} - \frac{E[(M_.)^*_T ]}{u(k)-1}\longrightarrow 1\\
			\nonumber             & \quad \quad \quad \quad \quad \quad \quad \quad k \longrightarrow \infty,
		\end{align}
		where we used Markov's inequality to obtain \eqref{Doob_Markov}. 
		Therefore $(\tau^k)$ is a localizing sequence.

		Let $n, k \in \mathbb{N}$, for all $t\in [0,T]$, $\tilde{Y}^{n,k}_{t\wedge\tau^k}$ is integrable. It follows from Jensen's inequality, since $u^{-1}$ is convex, that for all $0\leq s\leq t\leq T$,
		\begin{align}
		\nonumber
			E\left[ \tilde{Y}^{n,k}_{t\wedge\tau^k}\mid \mathcal{F}_s \right]& = E\left[ u^{-1}\circ u(\tilde{Y}^{n,k}_{t\wedge\tau^k}) \mid\mathcal{F}_s \right]\\
		\label{y_stopped}
                          & \geq u^{-1}\left( E\left[ u(\tilde{Y}^{n,k}_{t\wedge\tau^k})\mid \mathcal{F}_s \right] \right).
		\end{align}
		On the set $\{\tau^k \leq s\}$ it holds $E[u(\tilde{Y}^{n,k}_{\tau^k}) \mid \mathcal{F}_s] = u(\tilde{Y}^{n,k}_{\tau^k})$, and recalling that $u(\tilde{Y}^{n,k})$ is a submartingale, on the set $\{ \tau_k > s\}$ it holds $E[u(\tilde{Y}^{n,k}_{t\wedge\tau^k}) \mid \mathcal{F}_s] \geq u(\tilde{Y}^{n,k}_{s\wedge\tau^k} )$ by the optional sampling theorem.
		As $u^{-1}$ is increasing, \eqref{y_stopped} leads to
		\begin{align*}
			E\left[ \tilde{Y}^{n,k}_{t\wedge\tau^k}\mid \mathcal{F}_s \right] & \geq u^{-1}\left( u\left(\tilde{Y}^{n,k}_{s\wedge\tau^k} \right) \right)
                            = \tilde{Y}^{n,k}_{s\wedge\tau^k}.
		\end{align*}
 		Hence for all $n\in\mathbb{N}$, $\tilde{Y}^{n,k,\tau^k}:=\tilde{Y}^{n,k}_{.\wedge\tau^k}$ is a submartingale and $E[\tilde{Y}^{n,k}_{\tau^k}\mid\mathcal{F}_.]$ is a martingale. 
 		By Doob-Meyer decomposition, see \citep[Theorem 3.3.13]{Pro}, the c\`adl\`ag submartingale $\tilde{Y}^{n,k,\tau^k}$ admits the unique decomposition 
		\begin{equation}
		\label{decomY}
			\tilde{Y}^{n,k}_{t\wedge\tau^k} = \tilde{Y}_0^{n,k} + \tilde{A}^{n,k}_{t\wedge\tau^k} + \tilde{N}_{t\wedge\tau^k}^{n,k}, \quad t \in [0,T],
		\end{equation}
 		where $\tilde{A}^{n,k,\tau^k}$ is an increasing predictable process starting at $0$ and $\tilde{N}^{n,k,\tau^k}$ is a local martingale. Moreover, by Equation \eqref{eq:subsol} and Lemma \ref{convex_adm} there exists an increasing c\`adl\`ag process $\tilde{K}^{n,k}$ with $\tilde{K}^{n,k}_0 =0 $ such that 
		\begin{equation*}
			\tilde{Y}^{n,k}_t = \tilde{Y}^{n,k}_0 + \int_0^tg_u(\tilde{Y}^{n,k}_u, \tilde{Z}^{n,k}_u)\,du + \tilde{K}^{n,k}_t - \int_0^t\tilde{Z}^{n,k}_u\,dW_u;
		\end{equation*} 
		where $\int g(\tilde{Y}^{n,k}, \tilde{Z}^{n,k})\,du + \tilde{K}^{n,k}$ is increasing, since $g$ fulfills \ref{pos}, and is predictable. 
		In addition $\int\tilde{Z}^{n,k}\,dW$ is a local martingale. 
		By uniqueness of Doob-Meyer decomposition the processes $-\int\tilde{Z}^{n,k}1_{[0,\tau^k]}\,dW$ and $\tilde{N}^{n,k,\tau^k}$ as well as $\int g(\tilde{Y}^{n,k}, \tilde{Z}^{n,k})1_{[0,\tau^k]}\,du + \tilde{K}^{n,k,\tau^k}$ and $\tilde{A}^{n,k,\tau^k}$ are indistinguishable.
		Then, from Equation \eqref{decomY} and $\tilde{Y}^{n,k}_t \geq 0$ we have for all $t\in [0,T]$
		\begin{align}
		\nonumber
			\int_0^{t\wedge\tau^k}\tilde{Z}^{n,k}_u\,dW_u & = \tilde{Y}_0^{n,k} - \tilde{Y}_{t\wedge\tau^k}^{n,k} + \tilde{A}^{n,k}_{t\wedge\tau^k}\\
		\label{z1}                            & \leq V(x) + \tilde{A}^{n,k}_{\tau^k},
		\end{align}
		where the last inequality comes from the fact that $(\tilde{Y}^{n,k}_0)_n$ increases to $V(x)$.
		On the other hand, since $(\tilde{Y}^{n,k},\tilde{Z}^{n,k})$ satisfies \eqref{eq:subsol} and $g$ satisfies \ref{pos},
		\begin{align}
		\nonumber
			\int_0^{t\wedge\tau^k}\tilde{Z}^{n,k}_u\,dW_u &\geq \tilde{Y}^{n,k}_0 - 
			\tilde{Y}^{n,k}_{t\wedge\tau^k} + \int_0^{t\wedge\tau^k}g(\tilde{Y}^{n,k}_u, 
		\nonumber
			\tilde{Z}^{n,k}_u)\,du\\
		\label{z2}
                            & \geq - \tilde{Y}^{n,k}_{t\wedge\tau^k}\\
		\nonumber
                            & \geq - E[\tilde{Y}^{n,k}_{\tau^k}\mid \mathcal{F}_{t\wedge\tau^k}],
		\end{align}
		where the last inequality comes from the fact that $\tilde{Y}^{n,k,\tau^k}$ is a submartingale.
 		Therefore, $\int\tilde{Z}^{n,k}1_{[0,\tau^k]}\,dW$ is a supermartingale, as a local martingale bounded from below by the martingale $-E[\tilde{Y}^{n,k}_{\tau^k}\mid \mathcal{F}_{.\wedge\tau^k}]$.
		Hence, the inequalities \eqref{z1} and \eqref{z2} above lead to
		\begin{equation*}
			\abs{ \int_0^{t\wedge\tau^k}\tilde{Z}^{n,k}_u\,dW_u} \leq V(x) +\abs{ \tilde{Y}^{n,k}_{t\wedge\tau^k}} + \tilde{A}^{n,k}_{\tau^k},
		\end{equation*}
		which implies
		\begin{equation*}
			\left(\int_0^.\tilde{Z}^{n,k}_u\,dW_u\right)^\ast_{T\wedge\tau^k}\leq  V(x) + k+ \tilde{A}_{\tau^k}^{n,k}.
		\end{equation*}
		The random variable $\tilde{A}^{n,k}_{\tau^k}$ is bounded in $L^1$, since we have $\tilde{A}^{n,k}_{\tau^k} = \tilde{Y}^{n,k}_0 - \tilde{Y}^{n,k}_{ \tau^k} + \int_0^{\tau^k}\tilde{Z}^{n,k}_u\,dW_u$ with $(\tilde{Y}^{n,k}_0)_n$ increasing; $\int\tilde{Z}^{n,k}1_{[0,\tau^k]}\,dW$ a $P$-supermartingale and $\tilde{Y}^{n,k}_{\tau^k}$ bounded. 
		Hence, by Burkholder-Davis-Gundy's inequality, $(\tilde{Z}^{n,k}1_{[0,\tau^k]})_n$ is bounded in $\mathcal{L}^1$. 
		\item \emph{Construction of the candidates $\bar{Z}$ and $\bar{Y}$.} \label{step:candidates_Y-Z}
		Now we are ready to construct the candidates maximizers for the control and the value processes. 
		These constructions are based on compactness principles for the spaces $\mathcal{L}^1$ and $L^1$. 
		Since $(\tilde{Z}^{n,k}1_{[0,\tau^k]})_n$ is $\mathcal{L}^1$ bounded, there exists, by means of \citep[Theorem A]{Del-Sch96}, a sequence again denoted $(\tilde{Z}^{n,k}1_{[0,\tau^k]})_n$ in the asymptotic convex hull of $(\tilde{Z}^{n,k}1_{[0,\tau^k]})_n$ which converges in $\mathcal{L}^1$ along a localizing sequence $(\sigma^{n,k})_n$, and therefore $P\otimes dt$-a.s., to a process $\bar{Z}^k$. 
		We obtain $\bar{Z}$ by implementing a diagonalization procedure such as in step 7 of the proof of \citep[Theorem 4.1]{DHK1101}: For another $k' > k$, we can find a subsequence $(\tilde{Z}^{n,k'})_n$ such that $(\tilde{Z}^{n,k'}1_{[0,\tau^{k'}]}1_{[0,\sigma^{n,k'}]})_n$ converges to a process $\bar{Z}^{k'}$ in $\mathcal{L}^1$ and $P\otimes dt$-a.s. 
		By the same method, we can define the process $\bar{Z}$ by
		\begin{equation*}
  			\bar{Z}_0 = 0; \quad \bar{Z} = \sum_{k = 1}^{\infty}\bar{Z}^k1_{(\tau^{k-1}, \tau^k]},
		\end{equation*}
		and put $\tilde{Z}^n = \tilde{Z}^{n,n}$ and $\sigma^{n,n} = \sigma^n$. 
		Hence $( \tilde{Z}^n1_{[0,\tau^n]}1_{[0,\sigma^n]} )$ converges to $\bar{Z}$ in $\mathcal{L}^1$ and $P\otimes dt$-a.s., but we also have $( \tilde{Z}^n1_{[0,\tau^k]}1_{[0,\sigma^k]} )_n$ converges to $\bar{Z}^k$ for all $k$. 
		Thus, by Burkholder-Davis-Gundy's inequality, 
		\begin{equation*}
			\int_0^{t\wedge \tau^k\wedge\sigma^k}\tilde{Z}^n_s\,dW_s \longrightarrow \int_0^{t\wedge \tau^k\wedge\sigma^k}\bar{Z}_s\,dW_s,\quad \text{for all $t$, $P$-a.s. and for each $k$}.
 		\end{equation*}
		Taking the limit as $k\rightarrow \infty$ we have, for all t,
		\begin{equation}
		\label{limitz}
			\int_0^t \tilde{Z}^n_u\,dW_u \longrightarrow \int_0^t\bar{Z}_u\,dW_u ,\quad \text{$P$-a.s.}
		\end{equation}

		Let $(\tilde{Y}^n)$ be a sequence in the asymptotic convex hull of $(Y^n)$ corresponding to $(\tilde{Z}^n)$. 
		For all $t\in [0,T]$ and $k \in \mathbb{N}$, we have $\tilde{Y}^n_{t\wedge\tau^k} = \tilde{Y}^n_0 + \tilde{A}^n_{t\wedge\tau^k} - \int_0^{t\wedge\tau^k}\tilde{Z}^n_u\,dW_u$. 
		The sequence $(\tilde{A}^n_{T\wedge\tau^k})$ is bounded in $L^1$ as a consequence of the $L^1$-boundedness of $(A^n_{T\wedge\tau^k})_n$. 
		Therefore, by Helly's theorem, we can find a subsequence in the asymptotic convex hull of $(\tilde{A}^{n,\tau^k})_n$ still denoted $(\tilde{A}^{n,\tau^k})_n$ such that, for $k$ fixed, $(\tilde{A}^n_{t\wedge\tau^k})_n	$ converges to $\tilde{A}_{t\wedge\tau^k}$ for all $t\in [0,T]$, $P$-a.s. and such that $\tilde{A}^{\tau^k}$ is an increasing positive integrable process with $\tilde{A}_0 = 0$. 
		In particular, $(\tilde{A}_T^n)$ converges to $\tilde{A}_T$ $P$-a.s.
		Letting $k$ go to infinity, $(\tilde{A}_{t\wedge\tau^k})_k$ converges to $\tilde{A}_t$, for all $t\in[0,T)$, $P$-a.s. 
		Therefore we put 
		\begin{equation}
		\label{limity}
			\tilde{Y}_t := \lim_{k\rightarrow \infty}\lim_{n\rightarrow \infty}\tilde{Y}^n_{t\wedge\tau^k} = V(x) + \tilde{A}_t - \int_0^t\hat{Z}_u\,dW_u; \quad t \in [0, T).
		\end{equation}
		and for all $t\in [0,T)$, define
		\begin{equation*}
			\bar{Y}_t := \lim_{s\downarrow t\, s \in \mathbb{Q}}\tilde{Y}_s = V(x) + \lim_{s\downarrow t \,s \in \mathbb{Q}}\tilde{A}_s - \int_0^t\hat{Z}_u\,dW_u
		\end{equation*}
		and  $\bar{Y}_T:= \tilde{Y}_T$.
		We claim that 
		\begin{equation}
		\label{right_limit}
			\bar{Y} = \tilde{Y} \quad P\otimes dt \text{-a.s}.
		\end{equation} 
		This is because the jumps of $\tilde{Y}$ and $\bar{Y}$ coincide with the jumps of $\tilde{A}$, and being increasing, the latter process has countably many jumps.
		\item \emph{ Construction of the candidate $\bar{X}$.}\label{step:condidate_X}  
		Recall that since $g$ satisfies \ref{jconv}, by Lemma \ref{convex_adm} for all $n\in \mathbb{N}$ the triple $(\tilde{X}^n, \tilde{Y}^n, \tilde{Z}^n)$, element of the asymptotic convex hull of $((X^n, Y^n, Z^n))_n$ is in $\mathcal{A}(x)$; and from \ref{step:preliminary} we have $0\leq \tilde{X}^n_t\leq \tilde{Y}^n_t$. 
		Moreover, for each $n\in \mathbb{N}$ the process $\tilde{X}^n$ admits the representation 
		\begin{equation*}
			\tilde{X}^n_t = x + \int_0^t\tilde{\nu}^n_u\,dW^Q_u,  \quad t \in [0,T]
		\end{equation*}
		for some predictable process $\tilde{\nu}^n \in \mathcal{L}^1(Q)$.
 		Hence for all $t\in [0,T]$, for all $n\in \mathbb{N}$, we have
		\begin{align}
		\label{boundpi}
			\nonumber \abs{ \int_0^t\tilde{\nu}^n_u\,dW^Q_u} = \abs{ \tilde{X}^n_t - x} 
										\leq \vert \tilde{Y}^n_t\vert + x,
		\end{align}
 		which implies, taking $(\tilde{\nu}^{n,k})_{n}$ to be the subsequence corresponding to $(M^{n,k})_{n}$, recall \eqref{subseqM},
		\begin{equation}
		\label{eq:nubound}
  			\left( \int_0^. \tilde{\nu}^{n,k}_u\,dW_u^Q \right)^\ast_{T\wedge\tau^k} \leq (\tilde{Y}^{n,k}_.)^\ast_{T\wedge\tau^k} + x \leq k + x.
		\end{equation}
 		Therefore, by Burkholder-Davis-Gundy's inequality $(\tilde{\nu}^{n,k}1_{[0,\tau^k]})_{n}$ is bounded in $\mathcal{L}^1(Q)$. 
 		With this local $\mathcal{L}^1(Q)$ bound at hand, we can use similar arguments as in \ref{step:candidates_Y-Z} to obtain a process $\bar{\nu}$ such that
		\begin{equation}
		\label{eq:nubound2}
			\int_0^{t\wedge \tau^k}\tilde{\nu}^n_u\,dW^Q_u \longrightarrow \int_0^{t \wedge \tau^k}\bar{\nu}_u\,dW_u^Q\quad \text{for all } t, \,Q\text{-a.s. and for each k}
		\end{equation}
		and
		\begin{equation*}
			\int_0^t\tilde{\nu}^n_u\,dW^Q_u \longrightarrow \int_0^t\bar{\nu}_u\,dW^Q_u\quad \text{for all $t\in [0,T]$ , $Q$-a.s.}
		\end{equation*}
		Put
		\begin{equation}
		\label{xhat}
			\bar{X}_t = x + \int_0^t\bar{\nu}_u\,dW^Q_u.
		\end{equation}

		\item \emph{Verification.} 
		It follows from the definition of $\bar{Y}$ that $\bar{Y}_0 \geq V(x)$; let us verify that $(\bar{X}, \bar{Y}, \bar{Z})$ actually belongs to $\mathcal{A}(x)$. 
		We start by showing that $\bar{X}$ is a wealth process.
		From $\tilde{X}^n \ge 0$ for all $n \in \mathbb{R}$, follows $\bar{X} \ge 0$.
		Since $\sigma\sigma^\prime$ is of full rank, we can find a predictable process $\bar{\pi}$ such that $\bar{\pi}\sigma = \bar{\nu}$.
		Hence, from \eqref{eq:nubound} and \eqref{eq:nubound2}, $\bar{\pi}\sigma1_{[0,\tau^k]} \in {\cal L}^1(Q)$ for all $k \in \mathbb{N}$ and therefore $\bar{\pi}\sigma \in {\cal L}(Q)$ and $d\bar{X}_t = \bar{\pi}_u\sigma_u(\theta_udu + dW_u)$. 
		Next let us show that $(\bar{Y}, \bar{Z}) \in \mathcal{A}^u(\xi + \bar{X}_T,g)$. 
		To that end, we use an argument from \citep{DHK1101} . 
		By \eqref{right_limit}, there exists a set $B \subseteq \Omega \times [0,T]$ with $P\otimes dt (B^c) = 0$ such that $\bar{Y}_t(\omega) = \tilde{Y}_t(\omega) $ for all $(\omega,t) \in B$. 
		Then, there exists a set $D \subseteq \set{\omega: (\omega,t) \in B, \text{ for some }t }$ with $P(D) = 1$ such that for all  $\omega \in D$ the set $I(\omega) := \set{t \in [0,T]: (\omega,t) \in B}$ is a set of Lebesgue measure $T$ and $\bar{Y}_t(\omega) = \tilde{Y}_t(\omega)$ for all $t \in I(\omega)$.
		Denote by $\lambda_i^n$, $n\leq i\leq \Lambda^n$, the convex weights of the convex combination $\tilde{Z}^n$. 
		Let $s,t \in I$, $s\leq t$, where $I$; $s$ and $t$ depend on $\omega \in D$. 
		Using subsequently Fatou's lemma and \ref{jconv} we are led to 
		\begin{align}
		\nonumber \bar{Y}_s &+ \int_s^tg_u(\bar{Y}_u, \bar{Z}_u)\,du - \int_s^t\bar{Z}_u\,dW_u \\
   		\nonumber       &\leq \lim_{k\rightarrow \infty}\liminf_{n \rightarrow \infty}\left( \tilde{Y}^n_{s\wedge\tau^k} + \int_{s\wedge\tau^k}^{t\wedge\tau^k}g_u(\tilde{Y}^n_u, \tilde{Z}^n_u1_{[0, \sigma^n]}(u))\,du - \int_{s\wedge\tau^k}^{t\wedge\tau^k}\tilde{Z}^n_u\,dW_u \right)\\
		\nonumber &\leq \lim_{k\rightarrow\infty}\liminf_{n\rightarrow \infty}\sum_{i = n}^{\Lambda^n}\lambda_i^n\left(Y^i_{s\wedge\tau^k} + \int_{s\wedge\tau^k}^{t\wedge\tau^k}g_u	(Y^i_u, Z^i_u)\,du - \int_{s\wedge\tau^k}^{t\wedge\tau^k}Z^i_u\,dW_u \right)\\
		\nonumber &\leq \lim_{k\rightarrow \infty}\liminf_{n\rightarrow \infty}\sum_{i = n}^{\Lambda^n}\lambda_i^nY^i_{t\wedge\tau^k} = \lim_{k \rightarrow \infty}\liminf_{n\rightarrow \infty}\tilde{Y}^n_{t\wedge\tau^k}
		\end{align}
		\begin{align}
			& = \lim_{k\rightarrow \infty}\tilde{Y}_{t\wedge\tau^k} = \tilde{Y}_t= \bar{Y}_t.
		\label{verif}
		\end{align}
		If $s$ or $t$ are not in $I$, then there exist two sequences $(s_n)$ and $(t_n)$ in $I$ such that $s_n\downarrow s$, $t_n\downarrow t$ and $s_n\leq t_n$. Equation \eqref{verif} holds for each $s_n$, $t_n$. 
		Namely,
		\begin{equation*}
			\bar{Y}_{s_n} + \int_{s_n}^{t_n}g(\bar{Y}_u, \bar{Z}_u)\,du - \int_{s_n}^{t_n}\bar{Z}_u\,dW_u \leq \bar{Y}_{t_n}
		\end{equation*}
		holds for all $n\in \mathbb{N}$. 
		Since $\bar{Y}$ is right continuous and the integrals are continuous, taking the limit as $n$ tends to infinity yields the desired result for $s$ and $t$. 
		Therefore, the pair $(\bar{Y}, \bar{Z})$ satisfies the inequality \eqref{eq:subsol} with terminal condition $H = \xi + \bar{X}_T$ since for all $n \in \mathbb{N}$, $\tilde{Y}^n_T \leq \xi + \tilde{X}^n_T$; and $(\tilde{Y}^n_T)$ and $(\tilde{X}^n_T)$ converges $P$-a.s. to $\bar{Y}_T$ and $\bar{X}_T$, respectively. 
		Now let us show that $\bar{Y}$ is admissible and is a c\`adl\`ag process. 
		Due to Lemmas \ref{estimates_u} and \ref{convex_adm} and positivity of $u$ we have for all $n\in \mathbb{N}$ and $t\in [0,T]$ 
		\begin{align*}
			u\left( \tilde{Y}^n_t \right)^{p} &\leq E\left[ u\left( \xi + \tilde{X}^n_T \right)\mid \mathcal{F}_t\right]^{p}\\
                 &\leq E\left[ u(\xi + \tilde{X}^n_T)^{p}\mid \mathcal{F}_t \right],
		\end{align*}
		where we used Jensen's inequality.
		Taking expectation on both sides leads to $E[u(\tilde{Y}^n_t)^{p}] \leq E[u(\xi + \tilde{X}^n_T)^{p}] \leq C$. 
		Hence, the family $(u(\tilde{Y}^n_t))_{n}$ is uniformly integrable, for all $t\in[0,T]$. 
		Since for all $n$ the process $\tilde{Y}^n$ is admissible, we have $u(\tilde{Y}^n_s) \leq E[u(\tilde{Y}^n_t)\mid\mathcal{F}_s]$, $0\leq s\leq t\leq T$. 
		Taking the limit as $n$ goes to infinity, we obtain by means of continuity of $u$ and dominated convergence theorem $u(\tilde{Y}_s) \leq E[u(\tilde{Y}_t)\mid \mathcal{F}_s]$, i.e. $u(\tilde{Y})$ is a submartingale. The continuity property of the function $u$ and definition of $\hat{Y}$ imply
		\begin{equation*}
			u(\bar{Y}_t) = \lim_{s\uparrow t, s\in \mathbb{Q}}u(\tilde{Y}_s),
		\end{equation*}
		therefore by \citep[Proposition 1.3.14]{Kar-Shr}, $u(\bar{Y})$ is a c\`adl\`ag submartingale, and $\bar{Y}$ is thus c\`adl\`ag as well.  Hence $(\bar{X},  \bar{Y}, \bar{Z}) \in \mathcal{A}(x)$ and consequently $V(x) = \bar{Y}_0$, which ends the proof.
	\end{enumerate}
\end{proof}
\begin{remarks}
\begin{itemize}[fullwidth]
	\item[a)] Unlike in \cite{DHK1101} and \cite{Hey-Kup-Mai} where minimal supersolutions of BSDEs are studied, we cannot guarantee that the stochastic integral of the process $\bar{Z}$ is a supermartingale even for a bounded terminal condition $\xi$. 
	This is due to the fact that the random variable $\bar{X}_T$ may not be integrable.
	\item[b)] In the above result, the assumption $\xi \in L^1_+(\Omega,\mathcal{F}_T, Q)$ can be replaced by $\xi \in L^2_+(\Omega,\mathcal{F}_T, P)$. 
	This would cost a stronger integrability condition on the process $\theta$. 
	Indeed, if the martingale $\mathcal{E}(-\int\theta\,dW)$ satisfies the reverse H\"older inequality $R_2$ that is, there is a positive constance $C$ such that for all stopping times $\tau \leq T$ it holds
	\begin{equation*}
		E\left[ \mathcal{E} \left(-\int\theta_u\,dW_u \right)_T^2 \Mid \mathcal{F}_{\tau}\right]^{\frac{1}{2}} \leq C \mathcal{E}\left( -\int\theta_u\,dW_u \right)_{\tau},
	\end{equation*}
	then by \cite[Proposition 3]{Dol-Mey} we have $E_Q[\xi] = E[\mathcal{E}(-\int_0^.\theta_u\,dW_u)_T\xi]\leq CE[\xi^2]$ and therefore the first estimate of Lemma \ref{estimates_u} remains valid. 
	\end{itemize}
\end{remarks}
We finish this section with a direct consequence of Theorem \ref{existence} and its proof. 
Namely, existence of a maximal subsolution of a decoupled controlled FBSDE:
\begin{corollary}
	Assume that the generator $g$ satisfies \ref{jconv}, \ref{lsc}, \ref{nor} and \ref{pos}; and $\xi \in L^{\infty}_+$. 
	Then the system
	\begin{equation}
	\label{system}
		\begin{cases}
			Y_s &\leq Y_t  - \int_s^tg(Y_u, Z_u)\,du + \int_s^tZ_u\,dW_u , \quad Y_T \leq \xi + X_T^\pi\\
			X_t^{\pi} &= x + \int_0^t \pi_u\sigma_u\left( \theta_u\,du + \,dW_u \right), \quad \pi \in \Pi
		\end{cases}
	\end{equation}
	admits a maximal subsolution. 
	That is, there exists a control $\pi^\ast \in \Pi$ and a triple $(X^{\pi^\ast}, Y^\ast, Z^\ast)$ satisfying \eqref{system} with $u\left( Y^\ast \right)$ being a submartingale such that for any control $\pi \in \Pi$ and any processes $(X^{\pi}, Y, Z)$ satisfying \eqref{system} with $u(Y)$ a submartingale, we have $Y^\ast_0 \geq Y_0$.
\end{corollary}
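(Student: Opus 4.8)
The plan is to observe that the controlled FBSDE system \eqref{system} is merely a reformulation of the notion of admissible subsolution introduced in Section \ref{sec:setting}. Indeed, rearranging the first inequality in \eqref{system} yields exactly the subsolution inequality \eqref{eq:subsol} with terminal condition $H = \xi + X_T^\pi$, while the forward equation in \eqref{system} is nothing but the wealth dynamics \eqref{eq:wealth} for a strategy $\pi \in \Pi$. The requirement that $u(Y)$ be a submartingale is precisely the admissibility condition. Consequently, a triple $(X^\pi, Y, Z)$ with $X^\pi$ the wealth process of some $\pi \in \Pi$, solving \eqref{system} with $u(Y)$ a submartingale, is by definition an element of the set $\mathcal{A}(x)$, and conversely.

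With this identification in hand, I would simply invoke Theorem \ref{existence}: under \ref{jconv}, \ref{lsc}, \ref{nor}, \ref{pos} and $\xi \in L^{\infty}_+$, there exist $\bar{\pi} \in \Pi$ with wealth process $\bar{X}$ and a pair $(\bar{Y}, \bar{Z}) \in \mathcal{A}^u(\xi + \bar{X}_T, g)$ such that $\bar{Y}_0 = V(x)$. Setting $\pi^\ast := \bar{\pi}$, $X^{\pi^\ast} := \bar{X}$, $Y^\ast := \bar{Y}$ and $Z^\ast := \bar{Z}$, the triple $(X^{\pi^\ast}, Y^\ast, Z^\ast)$ satisfies \eqref{system} and $u(Y^\ast)$ is a submartingale, precisely because $(\bar{X}, \bar{Y}, \bar{Z}) \in \mathcal{A}(x)$.

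The maximality then follows immediately from the variational characterization $V(x) = \sup\Set{ Y_0 : (X, Y, Z) \in \mathcal{A}(x) }$. For any control $\pi \in \Pi$ and any triple $(X^\pi, Y, Z)$ satisfying \eqref{system} with $u(Y)$ a submartingale, the identification above places $(X^\pi, Y, Z)$ in $\mathcal{A}(x)$, whence $Y_0 \leq V(x) = Y^\ast_0$. This is exactly the asserted maximality.

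I would expect essentially no genuine obstacle here, the corollary being a direct translation of Theorem \ref{existence} and its proof. The only point demanding a line of care is to confirm that the domain constraint $Y \in \mathcal{S}_+$ implicit in the definition of admissibility, which is needed for $u(Y)$ to be well defined since $u$ is only defined on $\mathbb{R}_+$, is compatible with the positivity enforced throughout the construction; this is guaranteed there, as the candidate value process produced in the proof of Theorem \ref{existence} satisfies $\bar{Y} \geq \bar{X} \geq 0$.
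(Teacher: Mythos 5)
Your proposal is correct and follows exactly the paper's route: the paper's own proof is the one-line statement that the corollary follows from Theorem \ref{existence}, and your argument simply makes explicit the identification between solutions of the system \eqref{system} with $u(Y)$ a submartingale and elements of $\mathcal{A}(x)$, which is precisely the intended reading. The added remark on positivity of the value process is a reasonable sanity check but not a new ingredient.
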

\begin{proof}
	This follows from Theorem \ref{existence}.
\end{proof}

\subsection{ Stability Results }

In this section we assess the stability of maximal subsolutions with respect to the terminal condition and the generator. We will show that maximal subsolutions have a monotone stability with respect to both data.
These stability results, already proved in \citep{DHK1101} for minimal supersolution, will enable us to obtain a robust representation of the operator $\mathcal{E}^g_0$.
\begin{proposition}
\label{thm:stability-xi}
	Assume that the generator $g$ satisfies \ref{jconv}, \ref{lsc}, \ref{nor} and \ref{pos}. 
	Let $(\xi^n) \subseteq L^{\infty}_+$.
	 If $(\xi^n)$ decreases pointwise to a random variable $\xi$, then $\mathcal{E}^g_0(\xi) = \lim_{n\rightarrow \infty}\mathcal{E}^g_0(\xi^n)$.
\end{proposition}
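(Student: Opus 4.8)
Since $\mathcal{E}^g_0$ is monotone and $\xi^n\downarrow\xi$, I would first dispose of the easy half and fix notation. As $\xi\le\xi^{n+1}\le\xi^n$ pointwise and an admissible subsolution with terminal value below $\xi$ is a fortiori one with terminal value below $\xi^n$, monotonicity gives $\mathcal{E}^g_0(\xi)\le\mathcal{E}^g_0(\xi^{n+1})\le\mathcal{E}^g_0(\xi^n)$. The sequence $(\mathcal{E}^g_0(\xi^n))_n$ therefore decreases to some $L:=\lim_n\mathcal{E}^g_0(\xi^n)\ge\mathcal{E}^g_0(\xi)$, and it remains only to prove the reverse inequality $\mathcal{E}^g_0(\xi)\ge L$.

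To this end I would rerun the compactness construction from the proof of Theorem \ref{existence}, with the maximizing sequence now spread over the terminal conditions $\xi^n$. Pick for each $n$ a subsolution $(Y^n,Z^n)\in\mathcal{A}^u(\xi^n,g)$ with $Y^n_0\ge\mathcal{E}^g_0(\xi^n)-2^{-n}$, so that $Y^n_0\to L$. The decisive simplification relative to Theorem \ref{existence} is that all terminal conditions are here dominated by the single bounded variable $u(\xi^1)$: from $Y^n_T\le\xi^n\le\xi^1$, monotonicity of $u$ and admissibility one gets $u(Y^n_t)\le E[u(\xi^1)\mid\mathcal{F}_t]=:M_t$ for all $n$ and $t$, a uniform strengthening of the estimate in Lemma \ref{estimates_u} with $M$ a bounded martingale independent of $n$. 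Hence the localization and the local $\mathcal{L}^1$ bound on the controls obtained in the proof of Theorem \ref{existence} go through uniformly in $n$: a single localizing sequence $(\tau^k)$ makes every stopped process $(Y^n_{\cdot\wedge\tau^k})_n$ a bounded submartingale (Jensen's inequality, $u^{-1}$ being convex), and the Doob--Meyer decomposition yields a local $\mathcal{L}^1$ bound for $(Z^n1_{[0,\tau^k]})_n$. Passing to the asymptotic convex hull is legitimate because, by the computation in Lemma \ref{convex_adm}, a convex combination $\tilde Y^n=\sum_{i\ge n}\lambda^n_i Y^i$ of admissible subsolutions is again an admissible subsolution, now with terminal value below $\hat\xi^n:=\sum_{i\ge n}\lambda^n_i\xi^i$. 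The compactness results of \cite[Theorem A]{Del-Sch96}, Helly's theorem and Fatou's lemma then produce, exactly as in the latter steps of the proof of Theorem \ref{existence}, a limit $(\bar Y,\bar Z)$ with $\bar Y_0\ge L$ that satisfies \eqref{eq:subsol} on $[0,T)$ and for which $u(\bar Y)$ is a \cadlag{} submartingale.

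The step I expect to require the most care is the terminal condition, namely the verification that $\bar Y_T\le\xi$, since the approximating processes only satisfy $\tilde Y^n_T\le\hat\xi^n$ and $\hat\xi^n$ is in general not equal to $\xi$. Here the assumption $\xi^n\downarrow\xi$ is exactly what is needed: for $i\ge n$ one has $\xi\le\xi^i\le\xi^n$, so the convex combination obeys $\xi\le\hat\xi^n\le\xi^n$, and the squeeze forces $\hat\xi^n\to\xi$ pointwise. Combined with the almost sure convergence $\tilde Y^n_T\to\bar Y_T$ furnished by the terminal-value analysis in the verification step of Theorem \ref{existence}, this gives $\bar Y_T\le\lim_n\hat\xi^n=\xi$. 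Consequently $(\bar Y,\bar Z)\in\mathcal{A}^u(\xi,g)$ with $\bar Y_0\ge L$, so that $\mathcal{E}^g_0(\xi)\ge\bar Y_0\ge L$. Together with the first paragraph this establishes $\mathcal{E}^g_0(\xi)=L=\lim_n\mathcal{E}^g_0(\xi^n)$.
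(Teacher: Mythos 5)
Your proof is correct and follows essentially the same route as the paper's: the easy inequality by monotonicity, then a rerun of the compactness machinery of Theorem \ref{existence} for the reverse one, with your squeeze $\xi\le\hat\xi^n\le\xi^n$ for the terminal condition and the uniform domination of all value processes via $u(Y^n_t)\le E[u(\xi^1)\mid\mathcal{F}_t]$ simply filling in details the paper leaves implicit. The only cosmetic difference is that you work with $2^{-n}$-optimal subsolutions where the paper invokes the exact maximal ones supplied by Theorem \ref{existence}.
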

\begin{proof}
	See Appendix \ref{appendix}.
\end{proof}

\begin{proposition}
\label{thm:stability-g}
	Let $\xi \in L^{\infty}_+$ be a terminal condition, and $(g^n)$ be a sequence of generators decreasing pointwise to $g$. 
	Assume that each function satisfies \ref{jconv}, \ref{lsc}, \ref{nor} and \ref{pos}.
%	If $(g^n)$ increases pointwise to $g$, 
	Then $\mathcal{E}^{g}_0(\xi) = \lim_{n\rightarrow \infty}\mathcal{E}^{g^n}_0(\xi)$.
\end{proposition}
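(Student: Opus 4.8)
The plan is to establish the two inequalities $\lim_{n}\mathcal{E}^{g^n}_0(\xi)\le \mathcal{E}^{g}_0(\xi)$ and $\mathcal{E}^{g}_0(\xi)\le \lim_{n}\mathcal{E}^{g^n}_0(\xi)$ separately; the first is immediate from the monotone structure of the problem, while the second carries all the difficulty.

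For the first inequality I would exploit that $g^n\ge g^{n+1}\ge g$. Since then $\int_s^t g_u\le \int_s^t g^{n+1}_u\le \int_s^t g^n_u$ along any pair of processes, every subsolution of the BSDE with generator $g^n$ is also a subsolution for $g^{n+1}$ and for $g$, and the admissibility requirement that $u(Y)$ be a submartingale does not involve the generator. Hence $\mathcal{A}^u(\xi,g^n)\subseteq \mathcal{A}^u(\xi,g^{n+1})\subseteq \mathcal{A}^u(\xi,g)$, so $n\mapsto \mathcal{E}^{g^n}_0(\xi)$ is nondecreasing and dominated by $\mathcal{E}^{g}_0(\xi)$. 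The second estimate of Lemma \ref{estimates_u}, which relies only on admissibility and on $u$ being increasing and is therefore uniform in the generator, gives $u(Y_0)\le E[u(\xi)]<\infty$ for every admissible subsolution with terminal condition $\xi\in L^\infty_+$. Thus the increasing limit $L:=\lim_n \mathcal{E}^{g^n}_0(\xi)$ exists and satisfies $L\le \mathcal{E}^{g}_0(\xi)$.

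The heart of the proof is the reverse inequality $\mathcal{E}^{g}_0(\xi)\le L$. Fix $\varepsilon>0$ and a near-maximal subsolution $(\bar Y,\bar Z)\in\mathcal{A}^u(\xi,g)$ with $\bar Y_0\ge \mathcal{E}^{g}_0(\xi)-\varepsilon$, which exists by definition of the supremum (or one may take the maximal subsolution produced by the argument of Theorem \ref{existence}). Writing it as $\bar Y_t=\bar Y_0+\int_0^t g_u(\bar Y_u,\bar Z_u)\,du+\bar K_t-\int_0^t \bar Z_u\,dW_u$ with $\bar K$ increasing, I would absorb the excess drift $\Delta^n_u:=g^n_u(\bar Y_u,\bar Z_u)-g_u(\bar Y_u,\bar Z_u)\ge 0$, which decreases pointwise to $0$ because $g^n\downarrow g$, into the value process. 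Concretely I would keep $\bar Z$ as control and perturb $\bar Y$ by the cumulative excess $\int \Delta^n\,du$, producing for each $n$ a candidate subsolution $(Y^n,\bar Z)$ for $g^n$ whose initial value differs from $\bar Y_0$ by at most $\int_0^T\Delta^n_u\,du$. After localizing by stopping times as in Step 3 of the proof of Theorem \ref{existence} (needed because $g^1(\bar Y,\bar Z)$, hence $\Delta^1$, need not be globally integrable), monotone convergence gives $\int_0^T\Delta^n_u\,du\to 0$, so these initial values converge to $\bar Y_0$. As each $(Y^n,\bar Z)$ lies in $\mathcal{A}^u(\xi,g^n)$, this yields $\mathcal{E}^{g^n}_0(\xi)\ge Y^n_0\to \bar Y_0\ge \mathcal{E}^{g}_0(\xi)-\varepsilon$, and letting $n\to\infty$ and then $\varepsilon\downarrow 0$ gives $\mathcal{E}^{g}_0(\xi)\le L$.

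The main obstacle is exactly the verification that the perturbed triple is a genuine subsolution for $g^n$ with terminal value still dominated by $\xi$. Because the generator depends on $y$ and is not assumed monotone in $y$, evaluating $g^n$ at $Y^n$ rather than at $\bar Y$ introduces an error that need not be paid for by the excess drift; the subsolution inequality ultimately reduces to controlling $g^n_u(Y^n_u,\bar Z_u)$ by $g^n_u(\bar Y_u,\bar Z_u)$. I would resolve this by subtracting the cumulative excess whenever $g^n$ is nondecreasing in $y$ along the relevant range (so that $Y^n_u\le \bar Y_u$ forces $g^n_u(Y^n_u,\bar Z_u)\le g^n_u(\bar Y_u,\bar Z_u)$, using \ref{jconv}), and in general by instead raising the terminal value to $\xi^n:=\xi+\int_0^T\Delta^n_u\,du\downarrow \xi$; the residual difficulty then becomes comparing $\mathcal{E}^{g^n}_0(\xi^n)$ with $\mathcal{E}^{g^n}_0(\xi)$, which calls for a modulus of continuity of the operators in the terminal condition that is uniform in $n$ and for which Proposition \ref{thm:stability-xi} is the natural input. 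Checking that $u(Y^n)$ remains a submartingale and that the terminal constraint is preserved are the remaining technical points, for which \ref{pos}, \ref{lsc} and the uniform integrability furnished by $u$ in Lemma \ref{estimates_u} are the essential tools.
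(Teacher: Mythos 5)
Your proof is organized around the literal reading of the hypothesis ($g^n\downarrow g$), but the paper's own proof of this proposition --- and the only place it is invoked, namely the approximating generators $g^n(y,z)=\sup_{\abs{\beta}\le n,\,\norm{q}\le n}\{\beta y+qz-g^\ast(\beta,q)\}\vee(-\tfrac12 u''(y)\norm{z}^2/u'(y))$ in the proof of Theorem \ref{thm:to-robustprob}, which clearly increase in $n$ --- treats an \emph{increasing} sequence $g^n\uparrow g$; the appendix proof opens with ``since $(g^n)$ is increasing'' and closes with ``since $g^n\uparrow g$'', so the word ``decreasing'' in the statement is a slip. This reverses the entire geometry of the problem. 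For $g^n\uparrow g$ one has $\mathcal{A}^u(\xi,g)\subseteq\mathcal{A}^u(\xi,g^{n+1})\subseteq\mathcal{A}^u(\xi,g^n)$, so $\mathcal{E}^{g^n}_0(\xi)$ \emph{decreases} to some $Y_0\ge\mathcal{E}^g_0(\xi)$, and the hard direction is $Y_0\le\mathcal{E}^g_0(\xi)$. The paper obtains it without perturbing any fixed subsolution: it takes the maximal subsolutions $(\bar Y^n,\bar Z^n)\in\mathcal{A}^u(\xi,g^n)$ supplied by Theorem \ref{existence}, reruns the localization and compactness machinery of that proof on this sequence, and verifies that the limit pair is a $g$-subsolution using Fatou's lemma together with monotone convergence of $g^n\uparrow g$ (for fixed $m$ the subsolution inequality for $g^m$ survives the limit because $g^m\le g^i$ for $i\ge n\ge m$; then let $m\to\infty$). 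Your argument proves neither this statement nor, as it stands, the one you set yourself.

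Even taken on its own terms ($g^n\downarrow g$), the scheme does not close, essentially for the reasons you flag yourself plus a few more. First, shifting $\bar Y$ by the cumulative excess $\int\Delta^n\,du$ yields a $g^n$-subsolution only if $g^n(\cdot,z)$ is monotone in $y$ in the right direction; this is not assumed, and \ref{jconv} does not supply it. Second, the fallback of enlarging the terminal condition to $\xi^n=\xi+\int_0^T\Delta^n_u\,du$ moves you the wrong way: you then need $\mathcal{E}^{g^n}_0(\xi^n)-\mathcal{E}^{g^n}_0(\xi)\to0$, but $\mathcal{E}^{g}_0$ is not cash-additive, Proposition \ref{thm:stability-xi} is a purely qualitative monotone-stability statement for a \emph{fixed} generator (no modulus of continuity, no uniformity in $n$), and $\xi^n$ is random and need not belong to $L^\infty_+$, so no such estimate is available. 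Third, $\int_0^T\Delta^n_u\,du$ need not even be finite, and localization only controls stopped quantities, whereas the comparison with $\mathcal{E}^{g^n}_0(\xi)$ must be made over the whole horizon; one would further have to check that the shifted value process stays in $\mathbb{R}_+$ (the domain of $g$) and that its image under $u$ remains a submartingale. The key inequality $\mathcal{E}^g_0(\xi)\le\lim_n\mathcal{E}^{g^n}_0(\xi)$ is therefore not established.
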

\begin{proof}
	See Appendix \ref{appendix}.
\end{proof}
%!TEX root = optimierung.tex
\section{ Representation and Characterization }
In the previous section we obtained existence of optimal trading strategies of our control problem. 
This was a rather abstract result, and only gave us little information on how one could compute such an optimizer or how it depends on the other parameters. 
The point of this section is to find a characterization of the optimal controls of Problem \eqref{eq:optprob}. 
\subsection{Robust Representation}
We consider the set
\begin{equation*}
	\mathcal{D} := \Set{\beta: \beta \text{ predictable} \text{ and } \int_0^T\abs{\beta_u}\,du < \infty }.
\end{equation*}
For any $\beta \in \mathcal{D}$ and  $q \in \mathcal{L}$, we define, for $0\le s\le t \le T$
\begin{equation*}
	\frac{dQ^q}{dP} = \exp\left(\int_0^T q_u\,dW_u -\frac{1}{2}\int_0^T\norm{q_u}^2\,du \right)  \quad \text{and} \quad  D_{s,t}^{\beta} := e^{ -\int_s^t\beta_u\,du}, \quad t \in [0,T].
\end{equation*}
We also define the set
\begin{equation*}            
	\mathcal{Q} := \Set{q \in \mathcal{L}: \frac{dQ^q}{dP}\in L^1_{+}}.
\end{equation*}
%where $L^1_{++}$ denotes the strictly positive elements of $L^1$.

For any admissible trading strategy $\pi \in \Pi$, the associated wealth process is given by $dX^\pi_t = \pi_t\sigma_t(\theta_t\,dt + dW_t)$, with $X^\pi_0 = x$ and $X^\pi \ge 0$.
Let $0 \le s\le t\le T$, and consider the functional
\begin{equation*}
	{\cal E}^g_{s,t}(H):= \esssup\Set{ Y_s: (Y, Z) \in {\cal A}^u(H,g)}, \quad H \in L^0({\cal F}_t).
\end{equation*}
Recall that ${\cal A}^u(H, g)$ is the set of subsolutions $(Y,Z) \in {\cal S}_+\times {\cal L}$ of the BSDE with terminal condition $H$ and generator $g$ such that $u(Y)$ is a submartingale.
In particular, ${\cal E}^g_0(H) = {\cal E}^g_{0,T}(H)$ for all $H \in L^0({\cal F}_T)$.
Let $\tau \le \gamma$ be two stopping times valued in $[0,T]$. 
For any $\pi \in \Pi$, define
\begin{equation*}
	\Theta_{\tau, \gamma}(\pi) := \Set{ \pi^\prime \in \Pi: \pi^\prime1_{[\tau,\gamma] } = \pi1_{[\tau, \gamma] } }
\end{equation*}
and
\begin{equation}
\label{eq:Ytau_optimal}
	Y_\tau(X^{\pi}_\tau ) := \esssup_{\pi^\prime \in \Theta_{0,\tau}(\pi) }\mathcal{E}^g_{\tau,T}\left( X^{\pi^\prime}_T + \xi \right),
\end{equation}
where $\xi \in L^\infty_+$ is the random endowment.
We define the convex conjugate $g^\ast$ of the generator $g$ by
\begin{equation*}
	g^\ast(\beta, q):= \sup_{y\in \mathbb{R}_+ , z \in \mathbb{R}^d}\Set{\beta y + qz - g(y,z)}, \quad \beta \in \mathbb{R}, q \in \mathbb{R}^d.
\end{equation*}
Consider the condition 
\begin{enumerate}[label=(\textsc{Adm}),leftmargin=40pt]
	\item $g(y,z) \ge -1/2\norm{z}^2u''(y)/u'(y) $ on $\mathbb{R}_+ \times \mathbb{R}^d$\label{adm}.
\end{enumerate}
The following theorem gives a robust representation of $\mathcal{E}^g_{0,\tau}$.
\begin{theorem}
\label{thm:to-robustprob}
	Assume that the generator $g$ satisfies \ref{jconv}, \ref{lsc}, \ref{nor}, \ref{pos} and \ref{adm}. 
	Then, for every $\pi \in \Pi$ and any stopping time $0\le \tau \le T$, the following robust representation holds:
	\begin{equation}
	\label{eq:robust_rep}
		\mathcal{E}^g_{0,\tau}(Y_\tau(X^\pi_\tau) ) = \inf_{(\beta,q) \in \mathcal{D}\times\mathcal{Q}}E_{Q^q}\left[ D^\beta_{0,\tau} Y_\tau(X^\pi_\tau) + \int_0^\tau D^\beta_{0,u}g^\ast_u(\beta_u,q_u)\,du \right], \quad \pi \in \Pi.
	\end{equation}
\end{theorem}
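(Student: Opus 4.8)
The plan is to read the representation \eqref{eq:robust_rep} as an instance of BSDE duality in the spirit of \citet{tarpodual}, the engine being the Fenchel--Young inequality $g_u(y,z) + g^\ast_u(\beta,q) \ge \beta y + qz$, which holds with equality precisely when $(\beta,q)\in \partial g_u(y,z)$. A preliminary observation streamlines everything: under \ref{adm} the admissibility constraint is automatic. Indeed, writing the differential form of \eqref{eq:subsol} as $dY = g(Y,Z)\,dt - Z\,dW + dK$ with $K$ increasing, and applying It\^o's formula to $u(Y)$, the absolutely continuous part of $u(Y)$ has density $u'(Y)\big(g(Y,Z) + \frac12 (u''(Y)/u'(Y))\norm{Z}^2\big)$ plus the increasing term $u'(Y)\,dK$; since $u'>0$, this is nonnegative exactly by \ref{adm}. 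Hence every subsolution is admissible and $\mathcal{E}^g_{0,\tau}(H) = \sup\Set{Y_0 : (Y,Z) \text{ satisfies \eqref{eq:subsol} on } [0,\tau],\ Y_\tau \le H}$ with $H := Y_\tau(X^\pi_\tau)$.

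For the inequality ``$\le$'' I fix $(\beta,q)\in \mathcal{D}\times\mathcal{Q}$ with finite penalty and an arbitrary subsolution $(Y,Z)$, and consider
\[
	N_t := D^\beta_{0,t}Y_t + \int_0^t D^\beta_{0,u}g^\ast_u(\beta_u,q_u)\,du.
\]
Integration by parts, together with the Girsanov change of measure to $Q^q$ under which $W^q := W - \int q\,du$ is a Brownian motion, gives (using $dW = dW^q + q\,du$) that the $dt$-drift of $N$ under $Q^q$ equals $D^\beta_{0,t}\big(g_t(Y_t,Z_t) + g^\ast_t(\beta_t,q_t) - \beta_t Y_t - q_t Z_t\big)$, while its increasing part is $D^\beta_{0,t}\,dK_t$ and its local martingale part is $-D^\beta_{0,t}Z_t\,dW^q_t$. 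By Fenchel--Young the drift is nonnegative and $K$ is increasing, so $N$ is a $Q^q$-local submartingale; since $Y\ge 0$ and $D^\beta \ge 0$ it is bounded below, and Fatou along a localizing sequence yields $Y_0 = N_0 \le E_{Q^q}[N_\tau]$. As $Y_\tau \le H$ and $D^\beta_{0,\tau}\ge 0$, the right-hand side is at most $E_{Q^q}[D^\beta_{0,\tau}H + \int_0^\tau D^\beta_{0,u}g^\ast_u(\beta_u,q_u)\,du]$. Taking the supremum over subsolutions and then the infimum over $(\beta,q)$ gives ``$\le$''.

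For ``$\ge$'', equivalently attainment, I take the maximal subsolution $(\bar Y,\bar Z)$ of \eqref{eq:subsol} for terminal condition $H$, whose existence follows from the arguments of Theorem \ref{existence}, and use that for the \emph{maximal} subsolution \eqref{eq:subsol} holds with equality and $\bar Y_\tau = H$ (a non-vanishing increasing process would allow $\bar Y_0$ to be strictly increased). I then select measurably a process $(\beta^\ast_u,q^\ast_u)\in \partial g_u(\bar Y_u,\bar Z_u)$. Along this selection Fenchel--Young is tight, so the drift computed above vanishes identically and, together with $\bar K \equiv 0$, the process $N^\ast$ becomes a $Q^{q^\ast}$-martingale. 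Evaluating $N^\ast_0 = E_{Q^{q^\ast}}[N^\ast_\tau]$ and using $\bar Y_\tau = H$ yields $\bar Y_0 = E_{Q^{q^\ast}}[D^{\beta^\ast}_{0,\tau}H + \int_0^\tau D^{\beta^\ast}_{0,u}g^\ast_u(\beta^\ast_u,q^\ast_u)\,du]$, so the infimum is attained at $(\beta^\ast,q^\ast)$ and equals $\mathcal{E}^g_{0,\tau}(H)=\bar Y_0$.

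The delicate part is the reverse direction. First, one must justify that the maximal subsolution actually solves the equation with equality and attains its terminal value. Second, and more importantly, one must verify that the subgradient selection is admissible, namely $(\beta^\ast,q^\ast)\in \mathcal{D}\times\mathcal{Q}$ with finite penalty, and that $N^\ast$ is a genuine, not merely local, $Q^{q^\ast}$-martingale. This is exactly where \ref{adm}, the Muckenhoupt condition \eqref{assump}, and the resulting BMO/$A_p$ estimates enter: they control $q^\ast$ so that $dQ^{q^\ast}/dP\in L^1_+$ and control the integrability of $g^\ast(\beta^\ast,q^\ast)$ via the a priori bounds of Lemma \ref{estimates_u}. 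An alternative route that bypasses the explicit construction of the increasing process is to introduce the saddle functional $((Y,Z),(\beta,q))\mapsto$ (discounted value) and invoke a Sion-type minimax theorem to exchange $\sup_{(Y,Z)}$ and $\inf_{(\beta,q)}$, the inner supremum over subsolutions of the linearized affine generator being the explicit linear-BSDE value; the statement is then the BSDE duality of \citet{tarpodual} specialized to the present admissibility class.
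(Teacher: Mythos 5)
Your upper-bound argument (It\^o on $D^\beta_{0,t}Y_t$, Girsanov, Fenchel--Young, localization) is the same as the paper's, and is fine modulo the usual care in passing from the stopped inequality to $\tau$ itself (a point the paper also treats lightly). The problems are in your preliminary reduction and, decisively, in the reverse inequality.

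First, the claim that under \ref{adm} \emph{every} subsolution is automatically admissible is too strong: your It\^o computation only shows that $u(Y)$ is a local submartingale, and a local submartingale need not be a true one without additional integrability. This is precisely why the paper states Lemma \ref{thm:admit_solu} only for genuine solutions with $Y\ge c>0$ and bounded terminal condition, and applies it to the approximating BSDEs rather than to arbitrary subsolutions.

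Second, and more seriously, your proof of ``$\ge$'' proves the wrong (stronger) statement and rests on unestablished facts. You assume that the maximal subsolution solves \eqref{eq:subsol} with equality and attains $\bar Y_\tau = H$; neither is proved in the paper (the heuristic ``a non-vanishing increasing process would allow $\bar Y_0$ to be increased'' is not an argument here, since removing $K$ must preserve admissibility and the terminal constraint, and $g$ may take the value $+\infty$). You then need a measurable selection $(\beta^\ast,q^\ast)\in\partial g(\bar Y,\bar Z)$ lying in ${\cal D}\times{\cal Q}$ with finite penalty and making $N^\ast$ a true $Q^{q^\ast}$-martingale. You acknowledge this is the delicate point but offer only that \ref{adm} and the Muckenhoupt condition ``control'' it --- they do not. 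Under the hypotheses of Theorem \ref{thm:to-robustprob} the subdifferential may be empty or unbounded, $dQ^{q^\ast}/dP$ need not be a density, and the infimum in \eqref{eq:robust_rep} need not be attained at all: attainment is exactly the content of Theorem \ref{thm:subgradient}, which requires the additional quadratic-growth condition \ref{qg} and a Delbaen-type weak compactness argument (the paper even points out, in the certainty-equivalent example, that without \ref{qg} it cannot guarantee a saddle point). The paper's actual route for ``$\ge$'' avoids all of this: it truncates the terminal condition to $H^k=Y_\tau(X^\pi_\tau)\wedge k$, replaces $g$ by $g^n(y,z)=\sup_{\abs{\beta}\le n,\norm{q}\le n}\set{\beta y+qz-g^\ast(\beta,q)}\vee(-\tfrac12 u''(y)\norm{z}^2/u'(y))$, which has quadratic growth and converges to $g$ by Fenchel--Moreau, solves the resulting BSDEs exactly, reads off bounded $(\beta^n,q^n)$ with $\int q^n\,dW\in \mathrm{BMO}$ from the known duality for such drivers, checks admissibility via Lemma \ref{thm:admit_solu}, and passes to the limit using the stability results (Propositions \ref{thm:stability-xi} and \ref{thm:stability-g}) together with $g^\ast\le g^{n,\ast}$. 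You would need to either carry out such an approximation scheme or supply a genuinely different, complete argument; as written, the second half of your proof has a gap that cannot be closed under the stated hypotheses.
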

For the proof of the theorem we need the following lemma.
\begin{lemma}
\label{thm:admit_solu}
	Assume $H \in L^\infty$.
	Let $f$ be a function satisfying \ref{adm} 
	% \begin{equation}
	% \label{eq:f_admit_solu}
	% 	f(y,z) \ge \frac{1}{2}(1-r) (\norm{z}^2/y) 
	% \end{equation}
	% for all  $y \in (0,\infty);\, z \in \mathbb{R}^d$ 
	and such that the BSDE with terminal condition $H$ and generator $f$ has a solution $(Y, Z)\in {\cal S}\times {\cal L}^1$ satisfying $Y \ge c$ for some $c>0$.
 	Then, $u(Y)$ is a submartingale.
\end{lemma}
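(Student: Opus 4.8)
The plan is to apply It\^o's formula to $u(Y)$ and to read off the sign of the resulting drift directly from condition \ref{adm}. Since $(Y,Z)$ \emph{solves} the BSDE, \eqref{eq:subsol} holds with equality, so $Y$ is the continuous semimartingale with dynamics $dY_t = f_t(Y_t,Z_t)\,dt - Z_t\,dW_t$ and $Y_T = H$; continuity is clear because $Z \in \mathcal{L}^1$ makes $\int_0^\cdot Z_u\,dW_u$ a continuous (uniformly integrable) martingale. Note that for \ref{adm} to be meaningful $u$ is $C^2$ with $u'>0$, and since $u$ is concave $u'$ is nonincreasing; combined with $Y \ge c > 0$ this gives the crucial boundedness $0 < u'(Y_t) \le u'(c)$.

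First I would verify that the martingale part of $u(Y)$ is a genuine martingale, which is what lets me avoid the usual pitfall that a local submartingale bounded below is only a supermartingale. Because $\norm{u'(Y)Z}_{\mathcal{L}^1} \le u'(c)\norm{Z}_{\mathcal{L}^1} < \infty$, the process $M_t := \int_0^t u'(Y_u)Z_u\,dW_u$ is a uniformly integrable martingale by Burkholder--Davis--Gundy. It\^o's formula then gives
\begin{equation*}
	u(Y_t) = u(Y_0) + \int_0^t \Big( u'(Y_u)f_u(Y_u,Z_u) + \tfrac12 u''(Y_u)\norm{Z_u}^2 \Big)\,du - M_t .
\end{equation*}
The decisive step is the sign of the integrand: multiplying the inequality in \ref{adm}, $f_u(Y_u,Z_u) \ge -\tfrac12\norm{Z_u}^2 u''(Y_u)/u'(Y_u)$, by $u'(Y_u) > 0$ yields $u'(Y_u)f_u(Y_u,Z_u) \ge -\tfrac12\norm{Z_u}^2 u''(Y_u)$, so the integrand is nonnegative. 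Hence $A_t := \int_0^t ( u'(Y_u)f_u(Y_u,Z_u) + \tfrac12 u''(Y_u)\norm{Z_u}^2 )\,du$ is nondecreasing with $A_0 = 0$, and $u(Y) = u(Y_0) + A - M$ exhibits $u(Y)$ as a continuous local submartingale.

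It remains to promote this to a true submartingale, which I expect to be the only genuinely delicate point. Here the boundedness of the terminal datum enters: since $Y_T = H \in L^\infty$ with $c \le H$ (because $Y \ge c$), monotonicity of $u$ gives $u(H) \in L^\infty$, and together with $M_T \in L^1$ this shows $A_T = u(H) - u(Y_0) + M_T \in L^1$. As $A$ is nonnegative and nondecreasing, $0 \le A_t \le A_T$, so every $A_t$ is integrable and $u(Y_t) = u(Y_0) + A_t - M_t \in L^1$. Then for $0 \le s \le t \le T$,
\begin{equation*}
	E[u(Y_t)\mid \mathcal{F}_s] = u(Y_0) + E[A_t\mid \mathcal{F}_s] - M_s \ge u(Y_0) + A_s - M_s = u(Y_s),
\end{equation*}
using that $M$ is a martingale and $A$ is nondecreasing, which proves that $u(Y)$ is a submartingale. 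The main obstacle is thus precisely this integrability bookkeeping: one cannot simply invoke ``local submartingale bounded below'', as the Fatou limit would only give the reverse (supermartingale) inequality; it is the boundedness of $u'$ on $[c,\infty)$ and of $u(H)$ that make $M$ a true martingale and $A_T$ integrable, and these are exactly the consequences of $Y \ge c$, concavity of $u$, and $H \in L^\infty$.
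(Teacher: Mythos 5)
Your proof is correct and follows the same route as the paper: apply It\^o's formula to $u(Y)$, use \ref{adm} (multiplied by $u'(Y)>0$) to see that the finite-variation part is increasing, and use $H\in L^\infty$ together with $Y\ge c$ to upgrade the local submartingale to a true one. The paper compresses the last step into a single sentence, whereas you spell out exactly why those hypotheses matter — namely that $u'(Y)\le u'(c)$ makes the stochastic integral a genuine martingale and that $u(H)\in L^\infty$ makes $A_T$ integrable — which is the right bookkeeping.
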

\begin{proof}
%	The function $\hat{f}$ is of quadratic growth and a therefore, the BSDE with bounded terminal condition $X$ and generator $\hat{f}$ admits a solution $(\hat{Y}, \hat{Z})$.
	By It\^o's formula it holds
	\begin{align}
	\label{eq:Ito_submg}
		u(Y_t) &= u(Y_0) + \int_0^t\left( u'(Y_u)f(Y_u, Z_u) + \frac{1}{2}u''(Y_u)Z_u^2 \right)\,du- \int_0^tu'(Y_u)Z_u\,dW_u,
	\end{align}
	for all $t\in [0,T]$.
%	and since $f$ satisfies \ref{nor}, it follows from the dual representation of BSDEs solutions, see for instance \cite{Kar-Rav} or \cite{tarpodual}, that $Y_t \ge \essinf_{(\beta, q)\in {\cal D}\times {\cal Q}}E_{Q^q}[D_{0,T}^\beta X \mid {\cal F}_t] \ge c$.
	Therefore since $Y >0$, due to \ref{adm} we have $u'(Y_u)f(Y_u, Z_u) + \frac{1}{2}u''(Y_u)Z_u^2  \ge 0$ so that the second term of the right hand side in \eqref{eq:Ito_submg} defines an increasing process.
	Thus, as $H \in L^\infty$ and $Y\ge c$, $u(Y)$ is a submartingale.
	In other words, $(Y, Z)$ is an admissible subsolution of the BSDE with terminal condition $H$ and generator $f$.
%	Hence, $Y_0 \le {\cal E}^{\hat{f}}_0(X)$.
%	And since $f\le \hat{f}$, we have ${\cal E}^{\hat{f}}_0(X) \le {\cal E}^f_0(X) $, which completes the proof.
\end{proof}
\begin{proof}[proof of Theorem \ref{thm:to-robustprob}]
	Let $\tau \le T$ be a stopping time.
	For every $\pi \in \Pi$ and $(\beta, q	) \in \mathcal{D}\times \mathcal{Q}$, if ${\cal A}^u(Y_\tau(X^\pi_\tau),g) \neq \emptyset $, let $(Y,Z) \in \mathcal{A}^u( Y_\tau(X^\pi_\tau), g)$. 
	There exists a c\`adl\`ag increasing process $K$ with $K_0 = 0$ such that on $\set{t \le \tau}$,
	\begin{equation*}
		Y_t = Y_s + \int_s^tg_u(Y_u, Z_u)\,du - \int_s^tZ_u\,dW_u + K_t - K_s, \quad 0\le s \le t.
	\end{equation*}
	Define the localizing sequence of stopping times $(\sigma_n) $ by
	\begin{equation*}
		\sigma_n := \inf\Set{ t \ge 0: \abs{\int_0^tD^\beta_{0,u}Z_u\,dW_u} \ge n } \wedge T.
	\end{equation*}
	 Applying It\^o's formula to $D_{0,t}^{\beta}Y_t$ and Girsanov's theorem such as in \citep{Kar-Rav}, we have
	\begin{equation*}
		Y_0 \leq E_{Q^q}\left[ D_{0,t\wedge\sigma_n}^{\beta}Y_{t\wedge\sigma_n}+ \int_0^{t\wedge\sigma_n} D_{0,u}^{\beta}g^\ast_u(\beta_u, q_u)\,du \right], \quad \text{for all }n \in \mathbb{N} \text{ and } t \in [0,T].
	\end{equation*}
	Since $g$ satisfies \ref{nor} the function $g^\ast$ is positive.
	Using the fact that $(\sigma_n) $ is a localizing sequence, there is $n$ large enough such that $\tau \le \sigma_n$; and since $Y_\tau \le Y_\tau(X^\pi_\tau) $ and $D^\beta$ is positive, we have
	\begin{equation*}
		Y_0 \leq E_{Q^q}\left[ D_{0,\tau}^{\beta}Y_\tau(X^\pi_\tau) + \int_0^\tau D_{0,u}^{\beta}g^\ast_u(\beta_u, q_u)\,du \right].
	\end{equation*}
	Therefore, 
	\begin{equation}
	\label{eq:weakdual}
		\mathcal{E}^g_{0,\tau}( Y_\tau(X^\pi_\tau) ) \leq \inf_{(\beta ,q) \in \mathcal{D} \times \mathcal{Q}}E_{Q^q}\left[ D_{0,\tau}^{\beta} Y_\tau(X^\pi_\tau) + \int_0^\tau D_{0,u}^{\beta}g^\ast_u(\beta_u, q_u)\,du \right].
	\end{equation}
	If ${\cal A}^u(Y_\tau(X^\pi_\tau),g) = \emptyset $, \eqref{eq:weakdual} is obvious.

	On the other hand, for each $k \in \mathbb{N}$ and $\pi \in \Pi$ we define $H^k(\pi) := Y_\tau(X^\pi_\tau) \wedge k $, which is a bounded $\mathcal{F}_\tau$-random variable.
	Defining for every $n \in \mathbb{N}$ the function $g^n$ on $\mathbb{R}_+\times\mathbb{R}^d$ by
	\begin{equation*}
		g^n(y,z) := \sup_{\abs{\beta} \leq n; \norm{q} \leq n}\Set{ \beta y + q z - g^\ast(\beta, q)} \vee -\frac{1}{2}u''(y)\norm{z}^2/u'(y), 
	\end{equation*}
	the sequence $(g^n)$ converges pointwise to $g$ as a consequence of the Fenchel-Moreau theorem. 
	In addition, for each $n \in \mathbb{N}$ the function $g^n$ satisfies the quadratic growth condition 
	\begin{equation*}
		g^n(y,z) \le C_n(1 + \abs{y} + \norm{z}^2 ), \quad y \in \mathbb{R},\, z \in \mathbb{R}^d,\,\, C_n \ge 0.
	\end{equation*}
 	Fixing $n \in \mathbb{N}$, for every $k \in \mathbb{N}$ there exists $(Y^{n,k}, Z^{n,k}) \in {\cal S}\times {\cal L}^1$ solution of the BSDE with terminal condition $H^k(\pi)$ and driver $g^n$, see for instance \citep{Dar-Par}. 
 	It follows from \citep{Kar-Rav} that there exist predictable processes $(\beta^n, q^n)$ satisfying $\abs{\beta^n}\le C_n$ and $\int q^n\,dW \in BMO$ such that on $\set{t \le \tau}$
	\begin{equation}
	\label{eq:g-expectation}
		Y^{n,k}_t = E_{ Q^{q^n} }\left[ D_{t,\tau}^{\beta^n} H^k(\pi) + \int_t^\tau D^{\beta^n}_{0,u}g^{n,\ast}_u(\beta^n_u, q^n_u)\,du \Mid {\cal F}_t \right], \quad P\text{-a.s.,}
	\end{equation}
	where $g^{n,\ast}$ is the convex conjugate of $g^n$.
	In particular, since $g$ satisfies \ref{nor}, we have $\beta y - g^\ast(\beta, q) \le 0$ for all $\beta, q$ so that $g^n$ also satisfies \ref{nor}.
	Thus, it holds $g^{n,\ast}\ge 0$, and from $(x,x,0) \in {\cal A}(x)$ it follows $H^k(\pi) \ge x$, which yields $Y^{n,k}_t \ge E_{Q^{q^n}}[D^{\beta^n}_{t, \tau}x]> 0$.
	Since $g^n(y,z) \ge -\frac{1}{2}u''(y)\norm{z}^2/u'(y) $, it follows from Lemma \ref{thm:admit_solu} that $u(Y^{n,k})$ is a submartingale. 
	That is, $(Y^{n,k}, Z^{n,k})$ is an admissible subsolution of the BSDE with generator $g^n$ and terminal condition $H^k(\pi)$.
	Therefore, $\mathcal{E}^{g^n}_0(H^k(\pi)) \geq Y^{n,k}_0$.
	Taking the limit as $k$ goes to infinity, it follows from the monotone stability of Proposition \ref{thm:stability-xi} and the monotone convergence theorem that
	\begin{equation*}
	\mathcal{E}^{g^n}_{0,\tau}( Y_\tau(X^\pi_\tau) ) \geq E_{ Q^{q^n} }\left[ D_{0,\tau}^{\beta^n} Y_\tau(X^\pi_\tau) + \int_0^\tau D^{\beta^n}_{0,u}g^{n,\ast}_u(\beta^n_u, q^n_u)\,du \right] \quad \text{for all } n \in \mathbb{N}.
	\end{equation*} 
	Since $(\beta^n, q^n) \in {\cal D}\times {\cal Q}$ for each $n$, we have
	% $g^n$ is of linear growth and $Y_\tau(X^\pi_\tau) \ge 0$, we can use the localization technique of \cite[Proposition 3.1]{tarpodual} with Fatou's lemma to show that
	\begin{equation*}
		\mathcal{E}^{g^n}_{0,\tau}( Y_\tau(X^\pi_\tau) ) \geq \inf_{(\beta, q) \in \mathcal{D}\times \mathcal{Q}}E_{Q^q}\left[ D_{0,\tau}^{\beta} Y_\tau(X^\pi_\tau) + \int_0^\tau D_{0,u}^{\beta}g^{n,\ast}_u(\beta_u, q_u)\,du \right].
	\end{equation*}
	Using $g^\ast \le g^{n,\ast}$ for all $n\in \mathbb{N}$ and then taking the limit as $n$ goes to infinity, the monotone stability of Proposition \ref{thm:stability-g} yields the second inequality, which concludes the proof.
\end{proof}
\begin{proposition}
\label{thm:local_robust}
	Under the assumptions of Theorem \ref{thm:to-robustprob}, 
	for any $[0,T]$-valued stopping time $\tau$, it holds
	\begin{equation}
	\label{eq:local_robustproblem}
		V(x) = \sup_{\pi \in \Pi} \inf_{(\beta, q) \in \mathcal{D}\times \mathcal{Q} }E_{Q^q}\left[ D_{0,\tau}^{\beta} Y_\tau(X^\pi_\tau)  + \int_0^\tau D_{0,u}^{\beta}g^\ast_u(\beta_u, q_u)\,du \right].
	\end{equation}
\end{proposition}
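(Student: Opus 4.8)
The right-hand side of \eqref{eq:local_robustproblem} is, by Theorem \ref{thm:to-robustprob} applied to the $\mathcal{F}_\tau$-measurable random variable $Y_\tau(X^\pi_\tau)$, nothing but $\sup_{\pi\in\Pi}\mathcal{E}^g_{0,\tau}(Y_\tau(X^\pi_\tau))$. Hence the whole statement reduces to the dynamic programming identity
\[
V(x)=\sup_{\pi\in\Pi}\mathcal{E}^g_{0,\tau}\bigl(Y_\tau(X^\pi_\tau)\bigr),
\]
which I would prove by two inequalities. The structural facts I rely on are monotonicity and concavity of the operators $\mathcal{E}^g_{s,t}$, their locality on $\mathcal{F}_\tau$-sets, and a flow (time-consistency) property $\mathcal{E}^g_{0,T}(H)=\mathcal{E}^g_{0,\tau}(\mathcal{E}^g_{\tau,T}(H))$.

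First I would treat ``$\le$''. Restricting an admissible subsolution of the BSDE with terminal condition $X^\pi_T+\xi$ to $[\tau,T]$ shows that its value at $\tau$ is a candidate in the definition of $\mathcal{E}^g_{\tau,T}(X^\pi_T+\xi)$, and restricting it further to $[0,\tau]$ gives the easy half $\mathcal{E}^g_{0,T}(X^\pi_T+\xi)\le \mathcal{E}^g_{0,\tau}(\mathcal{E}^g_{\tau,T}(X^\pi_T+\xi))$ of the flow property. Since $\pi\in\Theta_{0,\tau}(\pi)$, definition \eqref{eq:Ytau_optimal} gives $\mathcal{E}^g_{\tau,T}(X^\pi_T+\xi)\le Y_\tau(X^\pi_\tau)$, and monotonicity of $\mathcal{E}^g_{0,\tau}$ then yields $\mathcal{E}^g_{0,T}(X^\pi_T+\xi)\le \mathcal{E}^g_{0,\tau}(Y_\tau(X^\pi_\tau))$. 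Taking the supremum over $\pi$ produces $V(x)\le \sup_\pi \mathcal{E}^g_{0,\tau}(Y_\tau(X^\pi_\tau))$.

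For ``$\ge$'' I would first show that the family $\Set{\mathcal{E}^g_{\tau,T}(X^{\pi'}_T+\xi):\pi'\in\Theta_{0,\tau}(\pi)}$ is upward directed: given $\pi'_1,\pi'_2\in\Theta_{0,\tau}(\pi)$, the set $A:=\Set{\mathcal{E}^g_{\tau,T}(X^{\pi'_1}_T+\xi)\ge \mathcal{E}^g_{\tau,T}(X^{\pi'_2}_T+\xi)}$ belongs to $\mathcal{F}_\tau$, and the strategy equal to $\pi$ on $[0,\tau]$, to $\pi'_1$ on $A\cap(\tau,T]$ and to $\pi'_2$ on $A^c\cap(\tau,T]$ lies in $\Theta_{0,\tau}(\pi)$ and dominates both, using $X^{\pi'_1}_\tau=X^{\pi'_2}_\tau=X^\pi_\tau$ and locality. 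Consequently there is a sequence $(\pi'_m)\subseteq\Theta_{0,\tau}(\pi)$ with $\mathcal{E}^g_{\tau,T}(X^{\pi'_m}_T+\xi)\uparrow Y_\tau(X^\pi_\tau)$. Invoking the flow property $\mathcal{E}^g_{0,\tau}(\mathcal{E}^g_{\tau,T}(X^{\pi'_m}_T+\xi))=\mathcal{E}^g_{0,T}(X^{\pi'_m}_T+\xi)$ together with continuity from below of $\mathcal{E}^g_{0,\tau}$, I would pass to the limit to get $\mathcal{E}^g_{0,\tau}(Y_\tau(X^\pi_\tau))=\lim_m \mathcal{E}^g_{0,T}(X^{\pi'_m}_T+\xi)\le V(x)$, and the supremum over $\pi$ closes the identity.

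The main obstacle is the interchange of $\mathcal{E}^g_{0,\tau}$ with the essential supremum over strategies hidden in $Y_\tau(X^\pi_\tau)$, that is, continuity from below of the operator, because Proposition \ref{thm:stability-xi} only provides monotone stability along \emph{decreasing} sequences. I would recover the missing continuity along the increasing, upward-directed approximation $\mathcal{E}^g_{\tau,T}(X^{\pi'_m}_T+\xi)\uparrow Y_\tau(X^\pi_\tau)$ from the robust representation of Theorem \ref{thm:to-robustprob}: each functional $(\beta,q)\mapsto E_{Q^q}[D^\beta_{0,\tau}(\cdot)+\int_0^\tau D^\beta_{0,u}g^\ast_u(\beta_u,q_u)\,du]$ is continuous from below by monotone convergence, and the directedness allows one to exchange the limit and the infimum. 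The second delicate ingredient is the ``hard half'' of the flow property, namely pasting a subsolution on $[0,\tau]$ with subsolutions on $[\tau,T]$ into a global admissible subsolution; this requires a measurable-selection and pasting argument for admissible subsolutions in the spirit of the time-consistency results of \cite{DHK1101}, with extra care since admissibility is encoded through the submartingale property of $u(Y)$ rather than through integrability.
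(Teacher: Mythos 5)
Your overall route is the same as the paper's: reduce \eqref{eq:local_robustproblem} via Theorem \ref{thm:to-robustprob} to the dynamic programming identity $V(x)=\sup_{\pi\in\Pi}\mathcal{E}^g_{0,\tau}(Y_\tau(X^\pi_\tau))$, and prove the latter from monotonicity and the flow property of the operators $\mathcal{E}^g_{s,t}$. The paper disposes of the second step in two lines by citing \cite[Proposition 3.6]{DHK1101}; you instead unfold it into two inequalities, and your easy direction (restriction of global subsolutions to $[\tau,T]$ and $[0,\tau]$, $\pi\in\Theta_{0,\tau}(\pi)$, monotonicity) as well as the upward-directedness of $\Set{\mathcal{E}^g_{\tau,T}(X^{\pi'}_T+\xi):\pi'\in\Theta_{0,\tau}(\pi)}$ via pasting on $\mathcal{F}_\tau$-sets are correct and are exactly the ingredients the citation is meant to cover.

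The one step that does not hold as you state it is the continuity from below, i.e.\ the passage $\mathcal{E}^g_{0,\tau}(H_m)\uparrow\mathcal{E}^g_{0,\tau}(Y_\tau(X^\pi_\tau))$ for $H_m:=\mathcal{E}^g_{\tau,T}(X^{\pi'_m}_T+\xi)\uparrow Y_\tau(X^\pi_\tau)$. Writing $F_m(\beta,q)$ and $F(\beta,q)$ for the dual functionals evaluated at $H_m$ and at the limit, monotone convergence gives $F_m(\beta,q)\uparrow F(\beta,q)$ pointwise in $(\beta,q)$, and hence $\sup_m\inf_{(\beta,q)}F_m\le\inf_{(\beta,q)}F$ for free; but what you need is the reverse inequality $\inf_{(\beta,q)}\sup_m F_m\le\sup_m\inf_{(\beta,q)}F_m$, which is a minimax interchange and does not follow from directedness of $(H_m)$: a near-optimal sequence $(\beta^m,q^m)$ for $F_m$ may concentrate the mass of $\frac{dQ^{q^m}}{dP}D^{\beta^m}_{0,\tau}$ on the event where $Y_\tau(X^\pi_\tau)-H_m$ is still large, so that $F(\beta^m,q^m)-F_m(\beta^m,q^m)$ need not vanish. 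Ruling this out requires uniform integrability of the relevant dual densities, which in this paper is only obtained under the additional growth condition \ref{qg} (via the weak compactness of $\Gamma_\tau$ in Theorem \ref{thm:subgradient}) and is not among the hypotheses of Proposition \ref{thm:local_robust}. A second, smaller caveat: Theorem \ref{thm:to-robustprob} is stated for the specific random variable $Y_\tau(X^\pi_\tau)$, so invoking its representation for each approximant $H_m$ is itself an extension that would need to be checked. As written, then, your argument for the ``$\ge$'' half has a genuine gap at the interchange of $\mathcal{E}^g_{0,\tau}$ with the essential supremum; closing it requires either the time-consistency result for nonlinear operators that the paper imports from \cite{DHK1101}, or an argument that does not pass through the dual representation.
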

\begin{proof}
	We have
	\begin{equation}
	\label{eq:localproblem}
		V(x) = \sup_{\pi \in \Pi } \mathcal{E}_{0, \tau}^g( Y_\tau( X^\pi_\tau) ).
	\end{equation}		
	In fact, 
	\begin{align*}
		\sup_{\pi \in \Pi } \mathcal{E}_{0, \tau}^g( Y_\tau( X^\pi_\tau) ) & = \sup_{\pi \in \Pi }\mathcal{E}^g_{0,\tau} \left( \esssup_{\pi^\prime \in \Theta_{0, \tau}(\pi) }\mathcal{E}_{\tau, T}^g\left( X^{\pi^\prime}_T + \xi \right) \right)\\
				                                                           & = \sup_{\pi \in \Pi }\sup_{\pi^\prime \in \Theta_{0, \tau}(\pi) }\mathcal{E}_{0, T}^g\left( X_T^{\pi^\prime} + \xi \right)
		                                                                                          = V(x),
	\end{align*}
	where we used monotonicity and flow property of the operators $\mathcal{E}^g_{s,t}(\cdot)$, $0 \le s\le t\le T$, see \cite[Proposition 3.6]{DHK1101}. By Theorem \ref{thm:to-robustprob} the proof is done.
\end{proof}

\subsection{Existence of a Saddle Point}
Considering the dual representation of $\mathcal{E}^g_{0,\tau} $ derived in Theorem \ref{thm:to-robustprob}, a pair $(\beta,q) \in \mathcal{D}\times\mathcal{Q} $ is said to be a subgradient of $\mathcal{E}^g_{0, \tau}$ at $Y_\tau(X^\pi_\tau)$ if
\begin{equation*}
	\mathcal{E}^g_{0,\tau}(Y_\tau(X^\pi_\tau)) = E_{Q^q}\left[D^{\beta}_{0,\tau} Y_\tau(X^\pi_\tau) + \int_0^\tau D^{\beta}_{0,u}g^\ast_u(\beta_u,q_u)\,du \right].
\end{equation*}
In the case where the generator only depends on $z$, equivalence between existence of a subgradient of a monetary utility function and quadratic growth of the driver $g$ was proved by \citet{Delbaen11}. 
The following result uses their compactness argument.
We will also need the conditions
\begin{enumerate}[label=(\textsc{Qg}),leftmargin=40pt]
	\item quadratic growth: $g:\mathbb{R}\times\mathbb{R}^d\to \mathbb{R}\cup\set{+\infty}$ and $\forall \eta >0$ there exists $C>0$: $g(y,z)\le C(1+ \abs{y} + \norm{z}^2)$ for all $y \in \mathbb{R}$: $\abs{y} \ge \eta$ and $z \in \mathbb{R}^d$.\label{qg}
\end{enumerate}
\begin{theorem}
\label{thm:subgradient}
	Assume that $g$ satisfies \ref{adm}, \ref{jconv}, \ref{lsc}, \ref{qg}, \ref{nor} and \ref{pos}.
	Then, $\mathcal{E}^g_0 $ admits a local subgradient: For any $[0,T]$-valued stopping time $\tau$ and any $\pi \in \Pi$,
	% For every $\pi \in \Pi$, there exists a sequence of stopping times $(\tau_m)$ such that 
	${\cal E}_{0, \tau}^g$ admits a subgradient $(q^\tau, \beta^\tau) \in \mathcal{D} \times \mathcal{Q} $ at $Y_{\tau}(X^\pi_{\tau})$. 
\end{theorem}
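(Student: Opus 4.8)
The goal is to prove that the infimum in the robust representation of Theorem~\ref{thm:to-robustprob} is attained, for then the minimizer is by definition a subgradient. Fix $\pi \in \Pi$ and the $[0,T]$-valued stopping time $\tau$, abbreviate $H := Y_\tau(X^\pi_\tau)$, and write
\[
	\mathcal{R}(\beta, q) := E_{Q^q}\left[ D^\beta_{0,\tau} H + \int_0^\tau D^\beta_{0,u} g^\ast_u(\beta_u, q_u)\,du \right],
\]
so that $\mathcal{E}^g_{0,\tau}(H) = \inf_{(\beta,q) \in \mathcal{D}\times\mathcal{Q}} \mathcal{R}(\beta, q)$ by Theorem~\ref{thm:to-robustprob}. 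The plan is to pick a minimizing sequence $(\beta^n, q^n)$, derive uniform estimates that make it compact in the sense of \citet{Delbaen11}, extract a limit $(\beta^\tau, q^\tau) \in \mathcal{D}\times\mathcal{Q}$, and verify by lower semicontinuity that $\mathcal{R}(\beta^\tau, q^\tau) = \mathcal{E}^g_{0,\tau}(H)$.

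First I would exploit the structure of the dual domain. Since $g$ satisfies \ref{nor} and \ref{pos}, the conjugate $g^\ast$ is nonnegative and finite only when $\beta \le 0$; in particular the penalty term is nonnegative, so $\mathcal{R}(\beta^n, q^n)$ is uniformly bounded. Two bounds then follow. On one hand, as in the proof of Theorem~\ref{thm:to-robustprob} one has $H \ge x > 0$, and since $\beta^n \le 0$ gives $D^{\beta^n}_{0,\tau} = \exp(\int_0^\tau \abs{\beta^n_u}\,du) \ge 1$, the uniform boundedness of $\mathcal{R}(\beta^n, q^n)$ together with $\mathcal{R}(\beta^n, q^n) \ge x\,E_{Q^{q^n}}[D^{\beta^n}_{0,\tau}]$ forces $E_{Q^{q^n}}[\exp(\int_0^\tau \abs{\beta^n_u}\,du)]$ to stay bounded and controls $(\beta^n)$ in $\mathcal{D}$. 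On the other hand, the quadratic growth condition \ref{qg} is the decisive input: the Legendre transform of a driver growing at most quadratically in $z$ is coercive in $q$, so that
\[
	g^\ast_u(\beta, q) \ge c\norm{q}^2 - C(1 + \abs{\beta}), \qquad \beta \le 0,\ q \in \mathbb{R}^d,
\]
for some constants $c > 0$, $C \ge 0$. Together with the control of $(\beta^n)$ and the conditional version of these estimates (using the flow property of $\mathcal{E}^g_{s,t}$), this yields a uniform bound on the stochastic integrals $\int q^n\,dW$ in the BMO norm.

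The uniform BMO bound is the gateway to compactness. By the reverse H\"older inequality for BMO martingales, the densities $dQ^{q^n}/dP$ are uniformly integrable and hence relatively weakly compact in $L^1$, which is exactly the setting of the weak compactness argument of \citet{Delbaen11}. Along a subsequence I would obtain a weak limit of the densities; by the martingale representation theorem this limit is the density of a measure $Q^{q^\tau}$ for some $q^\tau$ such that $\int q^\tau\,dW$ is a BMO martingale, and a corresponding limit $\beta^\tau$, so that $(\beta^\tau, q^\tau) \in \mathcal{D}\times\mathcal{Q}$. Lower semicontinuity of $\mathcal{R}$ along this convergence then follows from the convexity and lower semicontinuity of $g^\ast$ together with Fatou's lemma (legitimate thanks to the uniform integrability from the BMO estimate), giving
\[
	\mathcal{R}(\beta^\tau, q^\tau) \le \liminf_{n\to\infty} \mathcal{R}(\beta^n, q^n) = \mathcal{E}^g_{0,\tau}(H).
\]
Since the reverse inequality is immediate from the definition of the infimum, $(\beta^\tau, q^\tau)$ is the desired subgradient.

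The step I expect to be the main obstacle is the limit passage under the sequence of \emph{changing} measures $Q^{q^n}$: one must control simultaneously the discount factor $D^{\beta^n}$, the Girsanov density $dQ^{q^n}/dP$, and the convex penalty $g^\ast(\beta^n, q^n)$, and ensure their product converges strongly enough to pass the expectation to the limit. Because the density depends on $q$ nonlinearly through the stochastic exponential, plain convexity of $\mathcal{R}$ in $(\beta, q)$ is not available, so one cannot simply reduce to convex combinations; it is precisely the quadratic growth condition \ref{qg} that, via the uniform BMO bound and the ensuing uniform integrability of the densities, supplies the weak compactness and the lower semicontinuity needed to close the argument. Without \ref{qg} the minimizing densities could lose mass in the limit, and the infimum need not be attained.
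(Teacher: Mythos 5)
Your overall strategy --- minimizing sequence, coercivity of $g^\ast$ from \ref{qg}, uniform integrability, compactness, lower semicontinuity --- is the same as the paper's, but two of your intermediate claims do not hold as stated, and the crucial identification step is missing. First, the decisive consequence of \ref{qg} is not merely coercivity in $q$ but the fact that $g^\ast(\beta,q)=+\infty$ whenever $\abs{\beta}>C$: choosing $y$ with $\abs{y}\ge\eta$ and the appropriate sign in the supremum defining $g^\ast$ shows the effective dual domain is $\set{\abs{\beta}\le C}$. This pointwise bound is what makes $D^{\beta^n}_{0,\cdot}$ uniformly bounded above and below, converts the coercivity estimate into a uniform bound on $E_{Q^{q^n}}[\int_0^\tau\norm{q^n_u}^2\,du]$, and ultimately gives $\beta^\tau\in\mathcal{D}$ at the end. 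Your substitute --- $\beta\le 0$ from \ref{nor} together with a bound on $E_{Q^{q^n}}[D^{\beta^n}_{0,\tau}]$ under the changing measures $Q^{q^n}$ --- controls neither $D^{\beta^n}$ pointwise nor $(\beta^n)$ in any fixed space. Relatedly, the claimed uniform BMO bound on $\int q^n\,dW$ does not follow: the minimizing property yields only the unconditional estimate at time $0$, i.e.\ an entropy-type bound, not conditional bounds at all stopping times. The paper uses exactly this entropy bound, via the inclusion of the sublevel set $\Gamma_\tau$ into $\set{E[\frac{dQ^q}{dP}\log\frac{dQ^q}{dP}]\le c}$ and de la Vall\'ee Poussin, to obtain uniform integrability; no BMO estimate is needed or available.

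The more serious gap is the limit passage. Weak $L^1$ compactness of the densities produces a weak limit $M_T$, but (i) nothing identifies $M_T$ as being of the form $\frac{dQ^{q^\tau}}{dP}D^{\beta^\tau}_{0,\tau}$ --- martingale representation gives an integrand for the martingale $E[M_T\mid\mathcal{F}_t]$, not a factorization into a stochastic exponential times a discount factor --- and (ii) weak convergence of the densities gives no pointwise convergence of $(\beta^n,q^n)$, so Fatou's lemma and the lower semicontinuity of $g^\ast$ cannot be applied to the penalty term. You explicitly rule out reducing to convex combinations, but that is precisely how the paper closes the argument: the relevant convexity is in the density variable $M=\frac{dQ^q}{dP}D^\beta_{0,\tau}$ (the set $\mathcal{K}$ and the sublevel sets $\Gamma_\tau$ are convex), so one passes to convex combinations of the minimizing sequence, applies a Komlos-type argument to obtain $P$-a.s.\ convergence, and then invokes the structure result from the proof of \citep[Theorem 3.10]{tarpodual} to write each convex combination as $\frac{dQ^{\tilde q^{n}}}{dP}D^{\tilde\beta^{n}}_{0,\tau}$ with $(\tilde\beta^{n},\tilde q^{n})$ converging $P\otimes dt$-a.s.\ to some $(\beta^\tau,q^\tau)$. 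Only after this step do Fatou and the lower semicontinuity of $g^\ast$ apply; as written, your argument does not reach the conclusion.
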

\begin{proof}	Let $\pi \in \Pi$ be fixed for the rest of the proof.
Let $\eta >0 $ in \ref{qg}.
			Due to Theorem \ref{thm:to-robustprob}, we have
			\begin{equation*}
				\mathcal{E}^g_{0 , \tau}( Y_{\tau}( X^\pi_{\tau} ) )  = \inf_{ \frac{dQ^q}{dP}D^\beta_{0,\tau} \in \mathcal{K} }\Set{ E_{Q^q}\left[ D^\beta_{0,\tau} Y_{\tau}( X^\pi_{\tau} ) + \int_0^{\tau}D^\beta_{0,u}g^\ast_u(\beta_u,q_u)\,du \right] },
			\end{equation*}
		where 
		\begin{equation*}
			\mathcal{K} := \Set{ \frac{dQ^q}{dP}D^\beta_{0,\tau}: (\beta,q) \in \mathcal{D}\times \mathcal{Q}} \subseteq L^1.
		\end{equation*}
		For every $k \ge 0$ the set 
		\begin{equation}
		\label{eq:set-Gamma}
			\Gamma_{\tau} := \Set{\frac{dQ^q}{dP}D^\beta_{0,\tau}\in \mathcal{K}: E_{Q^q}\left[ \int_0^{\tau}D^\beta_{0,u}g^\ast_u(\beta_u,q_u)\,du \right]\le k}
		\end{equation}
		is convex, see \cite{tarpodual}. 
		Let us show that it is $\sigma(L^1, L^\infty)$-compact.
		Let $(\beta, q) \in \mathbb{R}\times\mathbb{R}^d $ be given. 
		By definition, we have
		\begin{align*}
			g^\ast(\beta, q) & =  \sup_{ y  \in \mathbb{R} , z \in \mathbb{R}^d}\Set{ \beta y + q z - g(y,z) }\\ 
							& \ge \sup_{ \abs{y}  \ge \eta, z \in \mathbb{R}^d}\Set{ \beta y + q z - g(y,z) }\\
                         	 & \ge \sup_{ \abs{y}  \ge \eta, z \in \mathbb{R}^d}\Set{ \beta y + q z -  C(1 +  y  + \norm{z}^2 )}\\
					     	 & \ge \sup_{ \abs{y}  \ge \eta}\Set{ \beta y - Cy } +  b\norm{q}^2  -C,
		\end{align*}
		with $b = \frac{1}{4C}$.
		If $\abs{\beta} > C $, then let $n \in \mathbb{N}$ be big enough such that $y  := n\beta$ satisfies $\abs{y} \ge \eta$. Then,
		\begin{equation*}
			\sup_{\abs{y} \ge \eta}\Set{ -\beta y - C \abs{y} } \ge n\abs{\beta}\left( \abs{\beta} -C \right),
		\end{equation*} 
		so that $g^\ast(\beta,q) = \infty$. 
		Therefore, we can restrict ourselves to $(\beta,q)\in {\cal D}\times {\cal Q}$ with $\abs{\beta} \le C$.
		Hence, we can find a positive constant $a$ such that
		\begin{equation}
		\label{eq:g_star_abc}
			g^\ast(\beta, q) \ge a\beta + b\norm{ q }^2 - C.
		\end{equation}
		Since $\beta$ is bounded, $D^\beta_{0,u} = e^{-\int_0^u\beta_r\,dr}$ is bounded as well.
		Thus multiplying both sides of \eqref{eq:g_star_abc} by $D^\beta_{0,u}$ and integrating with respect to $Q^q\otimes dt$ lead to
		\begin{equation*}
			E_{Q^q}\left[ \int_0^{\tau}D^\beta_{0,u}g^\ast(\beta_u,q_u)\,du \right] \ge A_1 + A_2E_{Q^q}\left[ \int_0^{\tau}\norm{ q_u }^2\,du \right],
		\end{equation*}
    	where $A_1$ and $A_2$ are positive constants which do not depend on $\beta$ and $q$.
   		 %	with $A_1 = aTCe^{-CT} + CTe^{-CT}$
    	Arguing similar to the proof of \citep[Theorem 2.2]{Delbaen11}, we can find a positive constant $c$ such that 
    	\begin{multline*}
    		\Set{ {\frac{dQ^q}{dP}D^\beta_{0,\tau}\in \mathcal{K}: E_{Q^q}\left[ \int_0^{\tau}D^\beta_{0,u}g^\ast_u(\beta_u,q_u)\,du \right]\le k} }\\ \subseteq \Set{ \frac{dQ^q}{dP}D^\beta_{0,\tau}\in {\cal K}: E\left[ \frac{dQ^q}{dP}\log\frac{dQ^q}{dP} \right] \le c }
    	\end{multline*} 
    	and therefore, we can conclude using the de la Vall\'ee Poussin theorem that the left hand side in the above inclusion is $L^1$- uniformly integrable.
	%\item \emph{Candidate solution:}
	 	We take a maximizing sequence $(\frac{dQ^{q^n}}{dP}D^{\beta^n}_{0,\tau} )_n$ for the functional $\mathcal{E}^g_{0 ,\tau}(Y_{\tau}(X^\pi_{\tau})) $. 
	 	Since $ Y_{\tau}(X^\pi_{\tau}) $ is positive, it follows that the sequence $( E_{Q^{q^n}} [ \int_0^{\tau}D^{\beta^n}_{0,u}g^\ast_u(\beta^n_u,q^n_u)\,du ] )_n$ admits a subsequence which is bounded from above. 
	 	Therefore, the previous step shows that the sequence $( \frac{dQ^{q^n}}{dP}D^{\beta^n}_{0,\tau} )_n$ is uniformly integrable.
	 	In addition, applying a compactness argument of Komlos type, we can find a sequence denoted $( \tilde{M}^{n}_T ) $ in the asymptotic convex hull of $ (\frac{dQ^{q^n}}{dP}D^{\beta^n}_{0,\tau} )_n $ which converges $P$-a.s. to the limit $M_T \in L^0_+$. 
	 	The sequence $(\tilde{M}^{n}_T)$ is as well uniformly integrable and therefore converges to $M_T$ in $L^1$.
	 	By the arguments used in the proof of \citep[Theorem 3.10]{tarpodual}, it is possible to show that for all $n \in \mathbb{N} $ there exist $\tilde{q}^{n}$ and $\tilde{\beta}^{n}$ such that $\tilde{M}^{n}_T = \frac{ dQ^{ \tilde{q}^{n} } }{dP} D^{ \tilde{ \beta}^{n} }_{0,\tau} $ and, up to other convex combinations, the sequences  $(\tilde{q}^{n} ) $ and $(\tilde{\beta}^{n})$ converge $P \otimes dt $-a.s. to some $q^\tau$ and $\beta^\tau$, respectively and $M_T = \frac{ dQ^{ q^\tau}}{dP} D^{  \beta^\tau  }_{0,\tau} $.
	 	Since $\vert\tilde{\beta}^{n}\vert \le C$ for all $n$, it holds $\abs{\beta^\tau} \le C$.
	 	By Fatou's lemma and convexity, we have
	 	\begin{align*}
	 		 \mathcal{E}^g_{0 ,\tau}( Y_{\tau}(X^\pi_{\tau}) ) &= \liminf_{n \rightarrow \infty} E_{Q^{q^n}}\left[D^{\beta^n}_{0,\tau} Y_{\tau}(X^\pi_{\tau}) + \int_0^{\tau} D^{\beta^n}_{0,u}g^\ast(\beta^n_u, q^n_u)\,du \right]\\
	 									                              & \ge E\left[ \liminf_{n \rightarrow \infty} \frac{dQ^{\tilde{q}^{n}}}{dP}\left( D^{\tilde{\beta}^{n}}_{0,\tau} Y_{\tau}(X^\pi_{\tau}) + \int_0^{\tau}D^{\tilde{\beta}^{n}}_{0,u}g^\ast(\tilde{\beta}^{n}_u, \tilde{q}^{n}_u)\,du \right) \right].
	 	\end{align*}
		Lower-semicontinuity of $g^\ast$ yields 
		\begin{equation*}
			\mathcal{E}^g_{0,\tau}( Y_{\tau}(X^\pi_{\tau}) ) \ge E_{Q^{q^\tau}}\left[ D^{\beta^\tau}_{0,\tau} Y_{\tau}(X^\pi_{\tau}) +  \int_0^{\tau} D^{\beta^\tau}_{0,u}g^\ast(\beta^\tau_u, q^\tau_u)\,du \right].
	 	\end{equation*}	 
	 	Since $\abs{\beta^\tau} \le C$ and $M_T  \in L^1$, we have $\beta^\tau \in {\cal D}$ and $q^\tau \in {\cal Q}$.
\end{proof}
\begin{corollary}
\label{thm:saddlepoint}
	Under the assumptions of Theorem \ref{thm:subgradient}, for any optimal strategy $\pi^\ast \in \Pi$ and any $[0,T]$-valued stopping time $\tau$ one has
	\begin{equation}
	\label{eq:local_optimum}
		V(x) = \mathcal{E}^g_{0, \tau} \left( Y_{\tau}(X^{\pi^\ast}_{\tau}) \right).
	\end{equation}
	In addition, Problem \eqref{eq:optprob} admits a local saddle point in the sense that, there exists $(\beta^\tau,q^\tau) \in \mathcal{D} \times \mathcal{Q}$ satisfying
	\begin{align*}
		V(x) & = E_{Q^{q^\tau}}\left[ D^{\beta^\tau}_{0,\tau}Y_{\tau}(X^{\pi^\ast}_{\tau}) + \int_0^{\tau}D^{\beta^\tau}_{0,u}g^\ast_u(\beta^\tau_u, q^\tau_u)\, du  \right]\\
		     & = \inf_{(\beta,q) \in \mathcal{D} \times \mathcal{Q} }\sup_{\pi \in \Pi}E_{Q^{q}}\left[ D^{\beta}_{0,\tau}Y_{\tau}(X^{\pi}_{\tau}) + \int_0^{\tau}D^{\beta}_{0,u}g^\ast_u(\beta_u, q_u)\, du  \right].
	\end{align*}
\end{corollary}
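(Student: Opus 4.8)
The plan is to derive the two displayed equalities separately, from the local optimality identity, the existence of a dual subgradient, and a minimax argument. Throughout I abbreviate the penalised functional by
\[
  J(\pi,\beta,q) := E_{Q^q}\left[ D^\beta_{0,\tau}Y_\tau(X^\pi_\tau) + \int_0^\tau D^\beta_{0,u}g^\ast_u(\beta_u,q_u)\,du \right],
\]
so that Theorem~\ref{thm:to-robustprob} reads $\mathcal{E}^g_{0,\tau}(Y_\tau(X^\pi_\tau)) = \inf_{(\beta,q)\in\mathcal{D}\times\mathcal{Q}} J(\pi,\beta,q)$ and Proposition~\ref{thm:local_robust} reads $V(x) = \sup_{\pi\in\Pi}\inf_{(\beta,q)} J(\pi,\beta,q)$. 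First I would establish \eqref{eq:local_optimum}. Because $\pi^\ast\in\Theta_{0,\tau}(\pi^\ast)$, the definition \eqref{eq:Ytau_optimal} gives $\mathcal{E}^g_{\tau,T}(\xi + X^{\pi^\ast}_T)\le Y_\tau(X^{\pi^\ast}_\tau)$. Applying the monotone operator $\mathcal{E}^g_{0,\tau}$ together with the flow property of $(\mathcal{E}^g_{s,t})$ used in the proof of Proposition~\ref{thm:local_robust} yields
\[
  V(x) = \mathcal{E}^g_0(\xi + X^{\pi^\ast}_T) = \mathcal{E}^g_{0,\tau}\left(\mathcal{E}^g_{\tau,T}(\xi + X^{\pi^\ast}_T)\right) \le \mathcal{E}^g_{0,\tau}\left(Y_\tau(X^{\pi^\ast}_\tau)\right),
\]
while the reverse inequality is immediate from \eqref{eq:localproblem}, since $\mathcal{E}^g_{0,\tau}(Y_\tau(X^{\pi^\ast}_\tau))\le \sup_{\pi\in\Pi}\mathcal{E}^g_{0,\tau}(Y_\tau(X^\pi_\tau)) = V(x)$. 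This proves \eqref{eq:local_optimum}.

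Next I would produce the dual variable and the first equality of the saddle identity. Applying Theorem~\ref{thm:subgradient} to the optimal strategy $\pi^\ast$ furnishes a subgradient $(\beta^\tau,q^\tau)\in\mathcal{D}\times\mathcal{Q}$ of $\mathcal{E}^g_{0,\tau}$ at $Y_\tau(X^{\pi^\ast}_\tau)$, that is $\mathcal{E}^g_{0,\tau}(Y_\tau(X^{\pi^\ast}_\tau)) = J(\pi^\ast,\beta^\tau,q^\tau)$. Combined with \eqref{eq:local_optimum} this gives $V(x) = J(\pi^\ast,\beta^\tau,q^\tau)$, the first of the two claimed equalities.

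It remains to prove the strong duality identity $\sup_\pi\inf_{(\beta,q)} J = \inf_{(\beta,q)}\sup_\pi J$; together with Proposition~\ref{thm:local_robust} this becomes $\inf_{(\beta,q)}\sup_\pi J = V(x)$, the second equality, completing the chain $V(x)=J(\pi^\ast,\beta^\tau,q^\tau)=\inf_{(\beta,q)}\sup_\pi J$. Weak duality gives $\sup\inf\le\inf\sup$ for free, so only $\inf_{(\beta,q)}\sup_\pi J\le V(x)$ is at stake, and I would obtain it from a minimax theorem of Sion type. The structural hypotheses are in place: reparametrising the dual variables through the density $\frac{dQ^q}{dP}D^\beta_{0,\tau}$ makes the first term of $J$ linear and, as recorded for $\Gamma_\tau$ in \eqref{eq:set-Gamma}, the penalty term convex, so $(\beta,q)\mapsto J$ is convex and weakly lower semicontinuous (using lower semicontinuity of $g^\ast$ and Fatou's lemma, exactly as in the final display of the proof of Theorem~\ref{thm:subgradient}); and $\pi\mapsto J(\pi,\beta,q)$ is concave, since $X^\pi_\tau$ is affine in $\pi$, $\mathcal{E}^g_{\tau,T}$ is concave, and $\mathcal{A}(x)$ is convex (Lemma~\ref{convex_adm}), whence $\pi\mapsto Y_\tau(X^\pi_\tau)$ is concave.

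The main obstacle is precisely this interchange of $\sup_\pi$ and $\inf_{(\beta,q)}$: it is licensed only because the weak compactness established in the proof of Theorem~\ref{thm:subgradient} (via the de la Vall\'ee Poussin criterion) confines the minimising sequence to the $\sigma(L^1,L^\infty)$-compact convex sublevel sets $\Gamma_\tau$ of \eqref{eq:set-Gamma}, on which the minimax theorem applies. Note that I do not expect to need $(\beta^\tau,q^\tau)$ itself to attain $\inf_{(\beta,q)}\sup_\pi J$: the corollary asks only that $J(\pi^\ast,\beta^\tau,q^\tau)$ and $\inf_{(\beta,q)}\sup_\pi J$ both equal $V(x)$, and these follow respectively from the subgradient step and from strong duality, which is exactly the sense in which Problem~\eqref{eq:optprob} admits the asserted local saddle point.
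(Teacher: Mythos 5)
Your treatment of \eqref{eq:local_optimum} and of the first displayed equality coincides with the paper's: the flow property together with $\pi^\ast\in\Theta_{0,\tau}(\pi^\ast)$ gives $V(x)\le\mathcal{E}^g_{0,\tau}(Y_\tau(X^{\pi^\ast}_\tau))$, the identity \eqref{eq:localproblem} gives the converse, and Theorem \ref{thm:subgradient} applied at $\pi^\ast$ produces $(\beta^\tau,q^\tau)$ with $V(x)=J(\pi^\ast,\beta^\tau,q^\tau)$, writing $J(\pi,\beta,q)$ for the penalised functional as in your proposal. For the remaining identity $\inf_{(\beta,q)}\sup_\pi J=V(x)$ you genuinely depart from the paper: the paper invokes no minimax theorem, but instead applies Theorem \ref{thm:subgradient} to \emph{every} $\pi\in\Pi$, obtaining minimizers $(\beta(\pi),q(\pi))$ with $J(\pi,\beta(\pi),q(\pi))=\mathcal{E}^g_{0,\tau}(Y_\tau(X^\pi_\tau))$, takes the supremum over $\pi$ and bounds the result below by $\inf_{(\beta,q)}\sup_\pi J$, closing the chain with weak duality. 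Your route through a minimax theorem on the weakly compact sublevel sets $\Gamma_\tau$ is a reasonable alternative and makes explicit where the compactness of Theorem \ref{thm:subgradient} enters, but as written it has one real gap.

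The gap is the restriction step. A minimax theorem on $\Pi\times\Gamma_\tau$ yields $\inf_{\Gamma_\tau}\sup_\pi J=\sup_\pi\inf_{\Gamma_\tau}J$, and since $\inf_{\Gamma_\tau}\ge\inf_{\mathcal{D}\times\mathcal{Q}}$ this a priori only gives $\inf_{\mathcal{D}\times\mathcal{Q}}\sup_\pi J\le\sup_\pi\inf_{\Gamma_\tau}J$ with the right-hand side \emph{at least} $V(x)$ --- the wrong direction. Confining a minimizing sequence of $\inf\sup$ to $\Gamma_\tau$ (via $Y_\tau(X^\pi_\tau)\ge 0$, so that $J$ dominates the penalty) justifies $\inf_{\Gamma_\tau}\sup_\pi J=\inf_{\mathcal{D}\times\mathcal{Q}}\sup_\pi J$, but you must in addition choose the threshold $k$ in \eqref{eq:set-Gamma} with $k\ge V(x)$ so that $\Gamma_\tau$ contains, for \emph{every} $\pi$, the minimizer $(\beta(\pi),q(\pi))$ furnished by Theorem \ref{thm:subgradient}; its penalty is at most $J(\pi,\beta(\pi),q(\pi))=\mathcal{E}^g_{0,\tau}(Y_\tau(X^\pi_\tau))\le V(x)$, whence $\inf_{\Gamma_\tau}J(\pi,\cdot)=\inf_{\mathcal{D}\times\mathcal{Q}}J(\pi,\cdot)$ for every $\pi$ and only then $\sup_\pi\inf_{\Gamma_\tau}J=V(x)$. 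Two smaller points: Sion's theorem proper requires upper semicontinuity of $J(\cdot,\beta,q)$ on $\Pi$, which carries no suitable topology here, so you should use the Kneser--Fan variant, which asks only for concavity in $\pi$ and compactness plus convex lower semicontinuity on the dual side; and the concavity of $\pi\mapsto Y_\tau(X^\pi_\tau)$ needs the linearity of \eqref{eq:wealth} in $\pi$, the inclusion $\lambda\Theta_{0,\tau}(\pi_1)+(1-\lambda)\Theta_{0,\tau}(\pi_2)\subseteq\Theta_{0,\tau}(\lambda\pi_1+(1-\lambda)\pi_2)$ and the upward directedness of the essential suprema, not merely Lemma \ref{convex_adm}.
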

\begin{proof}
	By definition of $Y_{\tau}(X^{\pi^\ast}_{\tau}) $, monotonicity and the flow property of $\mathcal{E}^g_{s,t}$; $0 \le s\le t\le T$, we have
	\begin{align*}
		\mathcal{E}^g_{0,\tau}(Y_{\tau}(X^{\pi^\ast}_{\tau}) ) & = \mathcal{E}^g_{0,\tau}\left( \esssup_{\pi \in \Theta_{0,\tau}(\pi^\ast) } \mathcal{E}_{\tau,T}^g \left( X^{\pi }_T + \xi \right) \right)\\
																	 & = \sup_{\pi \in \Theta_{0, \tau}(\pi^\ast)}\mathcal{E}^g_{0, T}\left( X^{\pi}_T + \xi \right) \ge V(x),
	\end{align*} 
	since $\pi^\ast \in \Theta_{0, \tau}(\pi^\ast)$.
	Thus, Equation \eqref{eq:local_optimum} is a consequence of Equation \eqref{eq:localproblem}. 

	It follows from Theorem \ref{thm:subgradient} and Equation \eqref{eq:local_optimum} that there exists $(\beta^\tau, q^\tau) \in \mathcal{D} \times \mathcal{Q}$ such that
	\begin{equation}
		V(x) = E_{Q^{q^\tau}}\left[ D_{0,T}^{\beta^\tau} Y_{\tau}(X^{\pi^\ast}_{\tau}) + \int_0^{\tau} D_{0,u}^{\beta^\tau}g^\ast_u(\beta^\tau_u,q^\tau_u)\,du \right]
	\end{equation}
	and	for every $\pi \in \Pi$ exists $(\beta(\pi), q(\pi)) \in \mathcal{D} \times \mathcal{Q}$ such that
	\begin{equation*}
		\mathcal{E}^g_{0, \tau}\left( Y_{\tau}(X^{\pi}_{\tau}) \right) = E_{Q^{q(\pi)}}\left[ D_{0,\tau}^{\beta(\pi)} Y_{\tau}(X^{\pi}_{\tau}) + \int_0^{\tau} D_{0,u}^{\beta(\pi)}g^\ast_u(\beta_u(\pi),q_u(\pi))\,du \right].
	\end{equation*}
	Thus, taking the supremum with respect to $\pi$ on both sides yields
	\begin{align*}
		\mathcal{E}^g_{0, \tau}\left(Y_{\tau}(X^{\pi^\ast}_{\tau}) \right) &= \sup_{\pi \in \Pi}E_{Q^{q(\pi)}}\left[ D_{0,\tau}^{\beta(\pi)} Y_{\tau}(X^{\pi}_{\tau}) + \int_0^{\tau}D_{0,u}^{\beta(\pi)}g^\ast_u(\beta_u(\pi),q_u(\pi))\,du \right]\\
                                            & \geq \inf_{(\beta, q)\in \mathcal{D}\times \mathcal{Q}} \sup_{\pi \in \Pi} E_{Q^{q}}\left[ D_{0,\tau}^{\beta} Y_{\tau}(X^{\pi}_{\tau}) + \int_0^{\tau}D_{0,u}^{\beta}g^\ast_u(\beta_u,q_u)\,du \right].
	\end{align*}
	Since we always have $\inf\sup \geq \sup\inf$, it follows that
	\begin{align*}
		\sup_{\pi \in \Pi}\inf_{(\beta, q) \in \mathcal{D}\times  \mathcal{Q}}&E_{Q^q}\left[D_{0,\tau}^{\beta} Y_{\tau}(X^{\pi}_{\tau}) + \int_0^{\tau}D_{0,u}^{\beta}g^\ast_u(\beta_u, q_u)\,du \right]\\
 		&= E_{Q^{q^\tau}}\left[ D_{0,\tau}^{\beta^\tau} Y_{\tau}(X^{\pi^\ast}_{\tau}) + \int_0^{\tau}D_{0,u}^{\beta^\tau}g^\ast_u(\beta^\tau_u,q^\tau_u)\,du \right]\\
 		& \quad \quad \quad  = \inf_{(\beta, q)\in \mathcal{D}\times \mathcal{Q}} \sup_{\pi \in \Pi} E_{Q^{q}}\left[ D_{0,\tau}^{\beta} Y_{\tau}(X^{\pi^\ast}_{\tau}) + \int_0^{\tau}D_{0,u}^{\beta}g^\ast_u(\beta_u,q_u)\,du \right].
	\end{align*}
	The proof is complete.
\end{proof}
 \begin{remark}
\label{rem:openinterval}
	If $g$ defined on the space $\mathbb{R}\times \mathbb{R}^d $ satisfies \ref{adm}, \ref{jconv}, \ref{nor}, \ref{pos} and \ref{qg}, then one can take $\tau = T$ in Equation \eqref{eq:Ytau_optimal}, that is $Y_\tau(X_\tau^\pi) = X^{\pi}_T + \xi $, and work on the whole time interval $[0,T] $ in the proof of Theorem \ref{thm:subgradient} and the subsequent corollary. 
	The main reason for working with stopping times is to allow for generators that satisfy the conditions \ref{jconv}, \ref{nor} and \ref{pos} only on a subset $I \times \mathbb{R}^d$, where $I \subseteq \mathbb{R}_+ $ is an open interval as in the following example. 
\end{remark}
\begin{example}[Certainty equivalent]
	Let us come back to the certainty equivalent example of Section \ref{sec:setting}.
	For $u(x) = \log(x) $, Equation \eqref{eq:u_concave} becomes
	\begin{equation*}
		Y_t = X - \frac{1}{2}\int_t^T\frac{\abs{Z_u}^2}{Y_u}\,du + \int_t^T Z_u\,dW_u, \quad t \in [0,T].		
	\end{equation*}
	The generator $g (y,z) = \frac{1}{2}\abs{z}^2/y $ satisfies \ref{lsc}, \ref{jconv}, \ref{nor} and \ref{pos} on $(0,\infty) \times \mathbb{R}^d$ and it can be extended on $\mathbb{R}_+\times \mathbb{R}^d$ to a generator satisfying the same conditions by putting
	\begin{equation*}
		g(y,z) = \begin{cases}
		           \frac{1}{2}\frac{\abs{z}^2}{y} &\text{ if } y>0\\
		           0                              &\text{ if } z = 0\\
		           + \infty                       &\text{ if } y = 0, z \neq 0.
		\end{cases}
	\end{equation*}
	Hence, Theorem \ref{existence} ensures the existence of an optimal trading strategy $\pi^\ast \in \Pi$. 
	However, if we consider the function on $\mathbb{R}_+ \times \mathbb{R}^d$, we can not guarantee, with our method, that the set $\Gamma_\tau$ defined in \eqref{eq:set-Gamma} is weakly compact and therefore that the problem admits a saddle point. 
	A way around is to introduce a stopping time $0< \tau \le T$ and work locally on $[0,\tau]$ as follows:
	Let $\pi^\ast \in \Pi$ be an optimal strategy and put $Y^{\pi}_t := u^{-1}(E[u(X^{\pi}_T+\xi)\mid {\cal F}_t])$. 
	Since $x >0$, there exists $m \in \mathbb{N}$ such that $x \ge \frac{1}{m}$.
%	Recall that by \ref{nor} we have $(x,x,0) \in {\cal A}(x)$, so that $Y^{\pi^\ast}_0\ge x$ 
	Define the stopping time $\tau $ by
	\begin{equation*}
		\tau : = \inf\Set{ t \ge 0: X^{\pi^\ast}_t \leq \frac{1}{m} }\wedge T.
	\end{equation*} 
	%is strictly positive.
	We can restrict the study to subsolutions $(Y,Z) \in \mathcal{A}^u(X^{\pi}_T + \xi) $ satisfying $Y \ge X^\pi$, for all $t \in [0,T]$.
	Hence, with BSDE duality we have $Y_{\tau \wedge t}^{\pi^\ast} \ge X^{\pi^\ast}_{\tau \wedge t} \ge \frac{1}{m} $. 
	Applying martingale representation theorem and It\^o's formula such as in Example \ref{example1}, we can find a process $Z^{\pi^\ast} \in {\cal L}^1$ such that
	\begin{equation*}
		Y^{\pi^\ast}_t = Y^{\pi^\ast}_\tau - \int_t^\tau g_u(Y^{\pi^\ast}_u, Z^{\pi^\ast}_u)\,du + \int_t^\tau Z^{\pi^\ast}_u\,dW_u \quad \text{on } \set{t\le \tau}.
	\end{equation*}
	Since the set $\set{Y_\tau: (X,Y, Z) \in {\cal A}(x)}$ is upward directed, using the arguments of Theorem \ref{existence} we can find a strategy $\bar{\pi} \in \Pi$ such that
	\begin{align*}
		Y_\tau(X^{\pi^\ast}_\tau) &:= \esssup_{\pi^\prime \in \Theta_{0,\tau}(\pi^\ast)}{\cal E}_{\tau,T}(X^{\pi^\prime}_T + \xi)
						    = Y^{\bar{\pi}}_\tau = {\cal E}^g_{\tau,T}(X^{\bar{\pi}}_T + \xi)
	\end{align*}
	with $\bar{\pi} \in \Theta_{0,\tau}(\pi^\ast)$, i.e. $\bar{\pi}1_{[0,\tau]} = \pi^\ast1_{[0, \tau]}$ and $\bar{\pi} \in \Pi$.
	Moreover, since $\Theta_{0,\tau}(\pi^\ast) = \Theta_{0,\tau}(\bar{\pi})$, we have $Y_\tau(X^{\pi^\ast}_\tau) = Y_\tau(X^{\bar{\pi}}_\tau) $.
	By $Y^{\bar{\pi}}_{t\wedge\tau} \ge X^{\bar{\pi}}_{t\wedge \tau} = X^{\pi^\ast}_{t\wedge\tau} \ge \frac{1}{m}>0 $, we also get
	\begin{align*}
		Y^{\bar{\pi}}_0 &= Y^{\bar{\pi}}_\tau - \int_0^\tau g_u(Y^{\bar{\pi}}_u, Z^{\bar{\pi}}_u)\,du + \int_0^\tau Z^{\bar{\pi}}_u\,dW_u\\
		 		&= Y_\tau(X_\tau^{\bar{\pi}}) - \int_0^\tau g_u(Y^{\bar{\pi}}_u, Z^{\bar{\pi}}_u)\,du + \int_0^\tau Z^{\bar{\pi}}_u\,dW_u.
	\end{align*}
	For almost every $(\omega,t)$ such that $t\le \tau(\omega)$ the function $g$ is differentiable at $(Y^{\bar{\pi}}_t(\omega), Z^{\bar{\pi}}_t(\omega))$ and it admits a unique subgradient $(\bar{\beta}_t(\omega), \bar{q}_t(\omega))$ given by
	\begin{equation*}
		\bar{q}_t = \frac{Z^{\bar{\pi}}_t}{Y^{\bar{\pi}}_t} \quad \text{and}\quad \bar{\beta}_t =-\frac{\abs{Z^{\bar{\pi}}_t}^2}{2(Y^{\bar{\pi}}_t)^2}\quad \text{on } \set{t\le \tau}.
	\end{equation*}
	Since $Y^{\bar{\pi}}_{t\wedge\tau} \ge 1/m$ and $Z^{\bar{\pi}} \in {\cal L}^1$, it follows that $(\bar{\beta}, \bar{q}) \in  {\cal D}\times {\cal Q} $ and we have $g_t(Y^{\bar{\pi}}_t, Z^{\bar{\pi}}_t ) = \bar{\beta}_tY^{\bar{\pi}}_t + \bar{q}_tZ^{\bar{\pi}}_t- g^\ast_t(\bar{\beta}_t, \bar{q}_t)$. 
	Thus, using the arguments leading to Equation \ref{eq:g-expectation}, one has
	\begin{equation}
	\label{eq:examp_grad1}
		Y^{\bar{\pi}}_0 = E_{Q^{\bar{q}}}\left[ D^{\bar{\beta}}_{0, \tau}Y_\tau(X^{\bar{\pi}}_\tau) + \int_0^\tau g^\ast_u(\bar{\beta}_u, \bar{q}_u)\,du \right].
	\end{equation}
	But since for every $(\beta, q)\in {\cal D}\times {\cal Q} $ it holds
	\begin{equation*}
		Y^{\bar{\pi}}_0 \le E_{Q^{q}}\left[ D^{\beta}_{0, \tau}Y_\tau(X^{\bar{\pi}}_\tau) + \int_0^\tau g^\ast_u(\beta_u, q_u)\,du \right],
	\end{equation*}
	it follows,
	\begin{align}
	\nonumber	Y^{\bar{\pi}}_0  &= \inf_{(\beta, q)\in {\cal D}\times {\cal Q}}E_{Q^{q}}\left[ D^{\beta}_{0, \tau}Y_\tau(X^{\bar{\pi}}_\tau) + \int_0^\tau g^\ast_u(\beta_u, q_u)\,du \right]\\
	\label{eq:examp_grad2}					 &= {\cal E}^g_{0,\tau}(Y_\tau(X^{\bar{\pi}}_\tau))
	\end{align}
	where the second equality above follows from the representation theorem \ref{thm:to-robustprob}.
	By the identity $Y_\tau(X^{\pi^\ast}_\tau) = Y_\tau(X^{\bar{\pi}}_\tau) $, one has ${\cal E}^g_{0,\tau}(Y_\tau(X^{\bar{\pi}}_\tau)) = {\cal E}^g_{0,\tau}(Y_\tau(X^{\pi^\ast}_\tau)) $, so that it follows from the equations \eqref{eq:examp_grad1} and \eqref{eq:examp_grad2} that ${\cal E}^g_{0,\tau}(Y_\tau(X^{\pi^\ast}_\tau))$ admits the subgradient $(\bar{\beta}, \bar{q})$.
	Therefore, the utility maximization problem $V(x) = \sup_{\pi \in \Pi} C_0(X^\pi_T + \xi) $ can be written as a robust control problem admitting a local saddle point in the sense of Corollary \ref{thm:saddlepoint}.   
	In fact,
	\begin{align*}
		& \inf_{(\beta, q) \in {\cal D}\times {\cal Q}}\sup_{\pi\in \Pi}E_{Q^q}\left[D^\beta_{0,\tau}Y_\tau(X^\pi_\tau) + \int_0^\tau g^\ast_u(\beta_u,q_u)\,du \right]\\
		 &\qquad \qquad \le \sup_{\pi \in \Pi}E_{Q^{\bar{q}}}\left[ D^{\bar{\beta}}_{0,\tau}Y_\tau(X^\pi_\tau) + \int_0^\tau g^\ast_u(\bar{\beta}_u,\bar{q}_u)\,du \right]\\
		 &\qquad \qquad \le {\cal E}^g_{0,\tau}(Y_\tau(X^{\pi^\ast}_\tau)) = E_{Q^{\bar{q}}}\left[ D^{\bar{\beta}}_{0, \tau}Y_\tau(X^{\pi^\ast}_\tau) + \int_0^\tau g^\ast_u(\bar{\beta}_u, \bar{q}_u)\,du \right]\\
		 &\qquad \qquad \le \inf_{(\beta, q) \in {\cal D}\times {\cal Q}}E_{Q^q}\left[D^\beta_{0,\tau}Y_\tau(X^{\pi^\ast}_\tau) + \int_0^\tau g^\ast_u(\beta_u,q_u)\,du \right]\\
		 &\qquad \qquad \le \sup_{\pi \in \Pi}\inf_{(\beta, q) \in {\cal D}\times {\cal Q}}E_{Q^q}\left[D^\beta_{0,\tau}Y_\tau(X^{\pi}_\tau) + \int_0^\tau g^\ast_u(\beta_u,q_u)\,du \right]\\
		 &\qquad \qquad \le \inf_{(\beta, q) \in {\cal D}\times {\cal Q}}\sup_{\pi\in \Pi}E_{Q^q}\left[D^\beta_{0,\tau}Y_\tau(X^\pi_\tau) + \int_0^\tau g^\ast_u(\beta_u,q_u)\,du \right].
	\end{align*}
To justify the second inequality above, notice that with the arguments leading to \eqref{eq:weakdual}, we have
\begin{equation*}
	{\cal E}^g_{\tau, T}(X^\pi_T + \xi) \le E_{Q^{\bar{q}}}\left[ D^{\bar{\beta}}_{\tau, T} (X^\pi_T + \xi) + \int_\tau^T g^\ast_u(\bar{\beta}_u, \bar{q}_u)\,du \mid {\cal F}_\tau \right].
\end{equation*}
Therefore,
\begin{align*}
	&\sup_{\pi \in \Pi}E_{Q^{\bar{q}}}\left[ D^{\bar{\beta}}_{0,\tau}Y_\tau(X^\pi_\tau) + \int_0^\tau g^\ast_u(\bar{\beta}_u,\bar{q}_u)\,du \right]\\
	& = \sup_{\pi \in \Pi}E_{Q^{\bar{q}}}\left[ D^{\bar{\beta}}_{0,\tau}\esssup_{\pi'\in \Theta_{0,\tau}(\pi)}{\cal E}^g_{\tau,T}(X^{\pi'}_T + \xi) + \int_0^\tau g^\ast_u(\bar{\beta}_u,\bar{q}_u)\,du \right]\\
%	& \le \sup_{\pi \in \Pi}\sup_{\pi'\in \Theta_{0,\tau}(\pi)}E_{Q^{\bar{q}}}\left[ D^{\bar{\beta}}_{0,\tau}{\cal E}^g_{\tau,T}(X^{\pi'}_T + \xi) + \int_0^\tau g^\ast_u(\bar{\beta}_u,\bar{q}_u)\,du \right]\\
	& =  \sup_{\pi \in \Pi}E_{Q^{\bar{q}}}\left[ D^{\bar{\beta}}_{0,\tau}{\cal E}^g_{\tau,T}(X^{\pi}_T + \xi) + \int_0^\tau g^\ast_u(\bar{\beta}_u,\bar{q}_u)\,du \right]\\
	& \le E_{Q^{\bar{q}}}\left[D^{\bar{\beta}}_{0,\tau}E_{Q^{\bar{q}}}\left[ D^{\bar{\beta}}_{\tau, T} (X^\pi_T + \xi) + \int_\tau^T g^\ast_u(\bar{\beta}_u, \bar{q}_u)\,du \mid {\cal F}_\tau \right] + \int_0^\tau g^\ast_u(\bar{\beta}_u,\bar{q}_u)\,du \right] \\
	&\le E_{Q^{\bar{q}}}\left[ D^{\bar{\beta}}_{0,T} (X^{\pi}_T + \xi) + \int_0^T g^\ast_u(\bar{\beta}_u,\bar{q}_u)\,du \right]\\
	&\le V(x) =  E_{Q^{\bar{q}}}\left[ D^{\bar{\beta}}_{0,\tau} Y_\tau(X_\tau^{\pi^\ast}) + \int_0^\tau g^\ast_u(\bar{\beta}_u,\bar{q}_u)\,du \right].
\end{align*}
\end{example}

\subsection{Characterization} 
\label{sec:charac}
We conclude this section by providing a characterization of an optimal trading strategy and a corresponding optimal model in the framework of the stochastic maximum principle.
It dates back to the work of Bismut in the 1970s.
The maximum principle has been widely used in the context of expected utility maximization to characterize optimal strategies, see for instance \citet{Hor-etal}.
Applying the perturbation techniques yielding the stochastic maximum principle as developed by \citet{PengFBSDE} to the control problem \eqref{eq:optprob} as it is does not give much information on the optimal solution because of the nonlinearity of the operator $\mathcal{E}^g_0 $.
This is where the dual representation for BSDEs becomes useful, in helping to linearize the problem by transforming it into a robust control problem under a linear operator.
In the following we denote by $\partial g^\ast/\partial a$ and $\partial g^\ast/\partial b$, when they exist, the derivative of the function $g^\ast:\mathbb{R}\times \mathbb{R}^d\to \mathbb{R}$ with respect to the first and the second variable, respectively.

Since for every $\pi \in \Pi$ the process $X^\pi$ is a positive $Q$-martingale, we can write $X^\pi$ as
\begin{equation}
\label{eq:divide}
	X^\pi_t = x + \int_0^t\sigma_u\tilde{\pi}_uX^\pi_u\,dW^Q_u
\end{equation}
for some predictable process $\tilde{\pi}$ satisfying $\set{\int_0^T\abs{\sigma_u\tilde{\pi}_u}^2\,du < \infty}=\set{X^\pi_T >0}$, see \cite[Chapter 1]{Kaz}.
The next theorem gives a characterization of the optimal model $(q^\ast, \beta^\ast)$ and of the process $\tilde{\pi}^\ast$ associated to the optimal strategy $\pi^\ast$.
\begin{theorem}
\label{thm:characterization}
	Assume that the driver $g$ is strictly convex, satisfies \ref{adm}, \ref{lsc}, \ref{nor}, \ref{pos},  and \ref{qg}.
	Further assume that $\xi \in L^\infty_+$.
	Then, for every saddle point $(\pi^\ast, (\beta^\ast, q^\ast))$ there exists a pair $(p,k)$ depending on $ \tilde{\pi}^\ast, \beta^\ast$ and $q^\ast$ such that $p_t\theta_t + p_tq^\ast_t + k_t = 0$ $P\otimes dt$~-a.s.	
	 and which solves the BSDE
	\begin{equation*}
		dp_t = -(\theta_tp_t +p_tq^\ast_t + k_t)\tilde{\pi}^\ast_t\sigma_t\,dt + k_t\,dW^{Q^{q^\ast}}_t, \quad p_T = D^{\beta^\ast}_{0,T}\quad  Q^{q^\ast}\text{a.s.}
	\end{equation*}	
	Furthermore, $g^\ast$ is differentiable at $(\beta^\ast,q^\ast)$ and satisfies
	\begin{equation}
	\label{eq:characdual}
		- \frac{\partial g^\ast_t}{\partial a}(\beta^\ast_t,q^\ast_t) + Y_t = 0 \quad \text{ and }\quad	-\frac{\partial g^\ast_t}{\partial b }(\beta^\ast_t,q^\ast_t) + Z_t = 0; \quad P\otimes dt \text{-a.s.},
	\end{equation}
	where $(Y,Z)$ solves the BSDE
	\begin{equation}
	\label{eq:BSDE}
		dY_t = g(Y_t,Z_t)\,dt - Z_t\,dW_t, \quad Y_T = X^{\pi^\ast}_T + \xi.
	\end{equation}
\end{theorem}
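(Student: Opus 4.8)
The plan is to treat the two coordinates of the saddle point $(\pi^\ast,(\beta^\ast,q^\ast))$ produced by Corollary \ref{thm:saddlepoint} separately: a Pontryagin--Bismut perturbation argument for the primal variable $\pi^\ast$, and a Fenchel--Young argument, sharpened by strict convexity, for the dual variable $(\beta^\ast,q^\ast)$. Throughout I take $\tau=T$, which is legitimate by Remark \ref{rem:openinterval}, so that $Y_\tau(X^\pi_\tau)=X^\pi_T+\xi$.

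\emph{Primal first--order condition.} Freezing $(\beta^\ast,q^\ast)$ at the saddle point, the strategy $\pi^\ast$ maximises $\pi\mapsto E_{Q^{q^\ast}}[\,D^{\beta^\ast}_{0,T}(X^\pi_T+\xi)+\int_0^T D^{\beta^\ast}_{0,u}g^\ast_u(\beta^\ast_u,q^\ast_u)\,du\,]$; as the running term is independent of $\pi$, this reduces to the \emph{linear} problem $\max_\pi E_{Q^{q^\ast}}[D^{\beta^\ast}_{0,T}X^\pi_T]$. Using \eqref{eq:divide} together with the Girsanov relation $dW^Q_t=dW^{Q^{q^\ast}}_t+(\theta_t+q^\ast_t)\,dt$, the state dynamics under $Q^{q^\ast}$ read $dX^\pi_t=\sigma_t\tilde\pi_t X^\pi_t\big((\theta_t+q^\ast_t)\,dt+dW^{Q^{q^\ast}}_t\big)$, a linear equation in which the control $\tilde\pi$ enters multiplicatively. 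I would perturb $\tilde\pi^\ast$ into $\tilde\pi^\ast+\varepsilon v$, write the linear variational equation for the first--order variation $\mathcal{X}$ of the state, and introduce the adjoint pair $(p,k)$ as the solution of the linear BSDE in the statement with terminal value $p_T=D^{\beta^\ast}_{0,T}$, this being the gradient of the terminal reward. Applying the product rule to $p\mathcal{X}$, the terms proportional to $\mathcal{X}$ cancel by the choice of the adjoint equation and one is left with $E_{Q^{q^\ast}}[D^{\beta^\ast}_{0,T}\mathcal{X}_T]=E_{Q^{q^\ast}}[\int_0^T \sigma_t v_t X^{\pi^\ast}_t((\theta_t+q^\ast_t)p_t+k_t)\,dt]$. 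Optimality forces the left side to vanish for every admissible $v$, hence $\sigma_t X^{\pi^\ast}_t((\theta_t+q^\ast_t)p_t+k_t)=0$; since $X^{\pi^\ast}>0$ this is exactly $p_t\theta_t+p_tq^\ast_t+k_t=0$ $P\otimes dt$--a.s.

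\emph{Dual first--order condition.} Freezing $\pi^\ast$, the pair $(\beta^\ast,q^\ast)$ attains the infimum in Theorem \ref{thm:to-robustprob} at $X^{\pi^\ast}_T+\xi$. By \ref{adm} and Lemma \ref{thm:admit_solu} the BSDE \eqref{eq:BSDE} has an admissible solution $(Y,Z)$, which is maximal among subsolutions and therefore satisfies $Y_0=\mathcal{E}^g_0(X^{\pi^\ast}_T+\xi)=V(x)$. Applying It\^o's formula to $D^{\beta^\ast}_{0,t}Y_t$ under $Q^{q^\ast}$ and taking expectations yields $Y_0=E_{Q^{q^\ast}}[\,D^{\beta^\ast}_{0,T}Y_T+\int_0^T D^{\beta^\ast}_{0,t}(\beta^\ast_t Y_t+q^\ast_t Z_t-g_t(Y_t,Z_t))\,dt\,]$, with $Y_T=X^{\pi^\ast}_T+\xi$. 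Because $\beta^\ast_t Y_t+q^\ast_t Z_t-g_t(Y_t,Z_t)\le g^\ast_t(\beta^\ast_t,q^\ast_t)$ pointwise by Fenchel--Young, while the saddle value equals $E_{Q^{q^\ast}}[D^{\beta^\ast}_{0,T}Y_T+\int_0^T D^{\beta^\ast}_{0,t}g^\ast_t(\beta^\ast_t,q^\ast_t)\,dt]=Y_0$, the two expectations coincide and the Fenchel--Young inequality must hold with equality $P\otimes dt$--a.s., i.e. $(\beta^\ast_t,q^\ast_t)\in\partial g_t(Y_t,Z_t)$. Strict convexity of $g$ makes $g^\ast$ differentiable at $(\beta^\ast_t,q^\ast_t)$, and the conjugate subgradient relation then reads $\partial g^\ast_t/\partial a(\beta^\ast_t,q^\ast_t)=Y_t$ and $\partial g^\ast_t/\partial b(\beta^\ast_t,q^\ast_t)=Z_t$, which is \eqref{eq:characdual}.

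\emph{Main obstacle.} The delicate point is the primal perturbation argument. Since $X^{\pi^\ast}_T$ need not be $P$--integrable, the entire computation must be run under $Q^{q^\ast}$ along a localising sequence, and one must use the BMO/Muckenhoupt estimates attached to \eqref{assump} to guarantee both the existence of an adjoint solution $(p,k)$ with enough integrability and the true (not merely local) martingale property of the stochastic integrals appearing in the product rule; only then are the interchange of $\partial_\varepsilon$ with $E_{Q^{q^\ast}}$ and the cancellation of the It\^o correction terms justified. A secondary, analogous check is that $\int D^{\beta^\ast}_{0,\cdot}Z\,dW^{Q^{q^\ast}}$ in the dual step is a genuine martingale, which again follows by localisation together with \ref{adm} and the integrability built into $\mathcal{D}\times\mathcal{Q}$.
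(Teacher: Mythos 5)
Your primal step is essentially the paper's argument: the same adjoint BSDE, the same product rule applied to the adjoint times the first variation of the state, the same cancellation, and the conclusion $p_t\theta_t+p_tq^\ast_t+k_t=0$ from the arbitrariness of the (two\mbox{-}sided) perturbation. The paper perturbs $\pi^\ast$ additively rather than $\tilde\pi^\ast$, and then carries out in detail the localization and domination of $\eta_{\tau^n}$ (via the weak--duality estimate \eqref{eq:weakdual}) that you only sketch in your ``main obstacle'' paragraph; but the skeleton is identical and your multiplicative perturbation even has the small advantage of preserving $X>0$ automatically.

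The genuine gap is in your dual step, at the very first sentence: ``By \ref{adm} and Lemma \ref{thm:admit_solu} the BSDE \eqref{eq:BSDE} has an admissible solution $(Y,Z)$, which is maximal among subsolutions and therefore satisfies $Y_0=\mathcal{E}^g_0(X^{\pi^\ast}_T+\xi)=V(x)$.'' Lemma \ref{thm:admit_solu} is a conditional statement: \emph{if} a solution with $Y\ge c>0$ and bounded terminal condition exists, \emph{then} it is admissible. It provides no existence. Here the terminal condition $X^{\pi^\ast}_T+\xi$ is unbounded and the generator is only lower semicontinuous, convex, possibly $+\infty$-valued, with quadratic growth only away from $y=0$; no existence theorem cited in the paper covers this case directly. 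Moreover, even granting a solution, its maximality among subsolutions (hence $Y_0=V(x)$) would require a comparison argument that is not available at this level of generality -- and your Fenchel--Young argument genuinely needs $Y_0=V(x)$, since the equality of the two expectations is exactly what forces the pointwise Fenchel gap to vanish. The paper avoids both issues by building $(Y,Z)$ from the dual side: it defines $Y_t$ as the essential infimum of the penalized linear functionals, invokes the attainment result \citep[Corollary 4.3]{tarpodual} to conclude that the infimum is realized at $(\beta^\ast,q^\ast)$ so that $(Y,Z)$ solves the \emph{linear} BSDE with drift $\beta^\ast Y+q^\ast Z-g^\ast(\beta^\ast,q^\ast)$, and then uses \citep[Theorem 4.6]{tarpodual} to get $(\beta^\ast_t,q^\ast_t)\in\partial g(Y_t,Z_t)$, which simultaneously shows that $(Y,Z)$ solves \eqref{eq:BSDE} and delivers the subgradient relation you were after. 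Once $(\beta^\ast,q^\ast)\in\partial g(Y,Z)$ is in hand, your final step (strict convexity of $g$ forces $\partial g^\ast(\beta^\ast,q^\ast)=\{(Y,Z)\}$, hence differentiability of $g^\ast$ by \citep[Theorem 25.1]{roc70}) coincides with the paper's. To repair your write-up you should either import those two duality results or supply an independent construction of $(Y,Z)$; the Fenchel--Young equality argument alone cannot bootstrap its own solution.
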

\begin{proof}
	By assumptions and Remark \ref{rem:openinterval} the control problem admits a saddle point $(\pi^\ast, (\beta^\ast, q^\ast))$, that is,
	\begin{align}
		\label{eq:Xast_int} V(x) = & E_{Q^{q^\ast}}\left[ D_{0,T}^{\beta^\ast} (X^{\pi^\ast}_T + \xi )  + \int_0^T D_{0,u}^{\beta^\ast}g^\ast_u(\beta^\ast_u, q^\ast_u)\,du \right]\\
		\nonumber	 = & \inf_{(\beta, q) \in \mathcal{D}\times \mathcal{Q}} \sup_{\pi \in \Pi} E_{Q^q}\left[ D_{0,T}^{\beta} (X^{\pi}_T + \xi ) + \int_0^T D_{0,u}^{\beta}g^\ast_u(\beta_u, q_u)\,du \right].
	\end{align}
	It follows from \eqref{eq:Xast_int} that $X^{\pi^\ast}_T$ is $Q^{q^\ast}$-integrable.
	Put
	\begin{equation*}
		Y_t := \essinf_{(\beta, q) \in \mathcal{D}\times \mathcal{Q}}  E_{Q^q}\left[ D_{t,T}^{\beta} (X^{\pi^\ast}_T + \xi ) + \int_t^T D^\beta_{t,u}g^\ast_u(\beta_u, q_u)\,du \mmid \mathcal{F}_t \right], \quad t \in [0,T].
	\end{equation*}
	By \citep[Corollary 4.3]{tarpodual}, for all $t \in [0,T]$, we have
	\begin{equation*}
		Y_t :=  E_{Q^{q^\ast}}\left[ D_{t,T}^{\beta^\ast} (X^{\pi^\ast}_T + \xi ) + \int_t^T D^{\beta^\ast}_{t,u}g^\ast_u(\beta^\ast_u, q^\ast_u)\,du \mmid \mathcal{F}_t \right]
	\end{equation*}
	so that applying martingale representation theorem and It\^o's formula, we can find a predictable process $Z$ such that $(Y,Z)$ solves the linear BSDE
			\begin{equation*}
		dY_t = \left( \beta^\ast_t Y_t + q^\ast_tZ_t -g^\ast_t(\beta^\ast_t, q^\ast_t) \right)\,dt - Z_t\,dW_t, \quad Y_T = X^{\pi^\ast}_T + \xi.
	\end{equation*}
	Moreover, by \citep[Theorem 4.6]{tarpodual}, for almost every $(\omega,t) $, the subgradients $\partial g(\omega,t,Y_t,Z_t) $ with respect to $(Y_t, Z_t)$ contain $(\beta^\ast_t,q^\ast_t)$.
	Hence, $(Y,Z)$ also solves the BSDE \eqref{eq:BSDE}.

	\begin{enumerate}[label =  ,fullwidth]
	\item \emph{Characterization of} $\tilde{\pi}^\ast$:
		For any $\pi \in \Pi$ define
	\begin{equation*}
		Y^\pi_t := E_{Q^{q^\ast}}\left[ D_{t,T}^{\beta^\ast} (X^{\pi}_T + \xi ) + \int_t^T D_{t,u}^{\beta^\ast}g^\ast_u(\beta^\ast_u, q^\ast_u)\,du \mmid \mathcal{F}_t \right], \quad t \in [0,T].
	\end{equation*}
	It follows from the saddle point property that
	\begin{equation*}
		V(x) = \sup_{\pi \in \Pi} Y^\pi_0 = Y^{\pi^\ast}_0.
	\end{equation*}
	Let $\pi \in \Pi$ be a bounded strategy such that for every $\varepsilon \in ( 0,1 )$, $\pi^\ast + \varepsilon \pi \in \Pi$ and let $\tilde{\pi}$ be the process associated to $\pi$, see \eqref{eq:divide}. 
	Then, by optimality of $\pi^\ast$, 
	\begin{equation*}
		0 = \lim_{\varepsilon \rightarrow 0}\frac{1}{\varepsilon}\left( Y_0^{\pi^\ast + \varepsilon\pi} - Y_0^{\pi^\ast} \right) = E_{Q^{q^\ast}}\left[ D^{\beta^\ast}_{0,T}\eta_T \right],
	\end{equation*}
	where $\eta_t := \lim_{\varepsilon \to 0}\frac{1}{\varepsilon}\left( X^{\pi^\ast + \varepsilon\pi}_t - X^{\pi^\ast}_t \right)$ solves the SDE
	\begin{align*}
		d\eta_t &= \theta_t\left( \tilde{\pi}^\ast_t\sigma_t\eta_t + X^{\pi^\ast}_t\sigma_t \tilde{\pi}_t \right)\,dt + \left( \tilde{\pi}^\ast_t\sigma_t\eta_t + X^{\pi^\ast}_t\sigma_t\tilde{\pi}_t \right)\,dW_t\\
		        &= \alpha_t(\theta_t + q^\ast_t)\,dt + \alpha_t\,dW_t^{Q^{q^\ast}}, \quad \eta_0 = 0\quad  Q^{q^\ast}\text{-a.s.}
	\end{align*}
	with $\alpha_t = ( \tilde{\pi}^\ast_t\sigma_t\eta_t + X^{\pi^\ast}_t\sigma_t \tilde{\pi}_t )$.
	In fact, this follows by applying the dominated convergence theorem \citep[Theorem IV.32]{Pro}, since 
	\begin{equation*}
		X^{\pi^\ast+\varepsilon\pi}_t = x{\cal E}\left( \tilde{\pi}^\ast+\varepsilon\tilde{\pi} dW^Q\right)_t \le x\exp\left( \int_0^t\tilde{\pi}_u^\ast\,dW^Q_u + \left\vert \int_0^t\tilde{\pi}_u\,dW_u^Q \right\vert + \frac{1}{2}\int_0^t(\tilde{\pi}_u^\ast + \tilde{\pi}_u)^2\,du \right),
	\end{equation*}
	where $dQ/dP = {\cal E}(-\int\theta\,dW)_T$.
	Let $(p,k)$ be the solution of the linear BSDE with bounded terminal condition
	\begin{equation*}
		dp_t = -(\theta_tp_t +  q^\ast_tp_t + k_t)\tilde{\pi}^\ast_t\sigma_t\,dt + k_t\,dW^{Q^{q^\ast}}_t, \quad p_T = D^{\beta^\ast}_{0,T} \quad  Q^{q^\ast}\text{-a.s.}
	\end{equation*}
	which is known as the adjoint equation.
	Observe that since $\beta^\ast \in {\cal D}$, $D^{\beta^\ast}_{0,T}$ is bounded.	
	Applying It\^o's formula to $\eta_tp_t$ yields
	\begin{equation}
	\label{eq:itoadj}
		\eta_tp_t = \int_0^tX^{\pi^\ast}_u\tilde{\pi}_u\sigma_u(p_u\theta_u +  p_uq^\ast_u + k_u)\,du + \int_0^t\left\{ \eta_uk_u + p_u(\tilde{\pi}^\ast_u\sigma_u\eta_u + X^{\pi^\ast}_u\sigma_u\tilde{\pi}_u) \right\}\,dW_u^{Q^{q^\ast}}.	
	\end{equation}
	Since we cannot ensure that the second term of the left hand side of Equation \eqref{eq:itoadj} is a true $Q^{q^\ast}$-martingale, we introduce the following localization:
	\begin{equation*}
		\tau^n := \inf\Set{ t \ge 0: \abs{\int_0^t\left\{ \eta_uk_u + p_u(\tilde{\pi}^\ast_u\sigma_u\eta_u + X^{\pi^\ast}_u\sigma_u\tilde{\pi}_u) \right\}\,dW_u^{Q^{q^\ast}}} > n }\wedge T.
	\end{equation*}
	Hence, taking expectation with respect to $Q^{q^\ast}$ on both sides of \eqref{eq:itoadj}, we have
	\begin{equation}
	\label{eq:stopped_max_prin}
		E_{Q^{q^\ast}}\left[ p_{\tau^n}\eta_{\tau^n} \right] = E_{Q^{q^\ast}}\left[ \int_0^{\tau^n}X^{\pi^\ast}_u\tilde{\pi}_u\sigma_u(p_u\theta_u + p_uq^\ast_u + k_u)\,du \right].
	\end{equation}
	By definition of ${\cal D}$, the family $(D_{0,\tau^n}^{\beta^\ast})_n$ is dominated by the bounded random variable $e^{\int_0^T(\beta^\ast_u)^-\,du}$. Moreover, for any $\delta > 0$ there exists $\varepsilon >0$ such that
	\begin{equation*}
		\eta_{\tau^n} \le \frac{1}{\varepsilon}(X^{\pi^\ast + \varepsilon\pi}_{\tau^n} - X^{\pi^\ast}_{\tau^n}) + \delta		              \le \frac{1}{\varepsilon}X^{\pi^\ast + \varepsilon\pi}_{\tau^n} + \delta.
	\end{equation*}
	Because we can restrict ourselves to subsolutions $(Y,Z) \in \mathcal{A}^u(X^\pi_T + \xi) $ satisfying $Y \ge X^\pi$, we can further estimate $\eta_{\tau^n}$ by
	\begin{equation*}
		\eta_{\tau^n} \le \frac{1}{\varepsilon}Y^{\pi^\ast + \varepsilon\pi}_{\tau^n} + \delta \le \frac{1}{\varepsilon}E_{Q^{q^\ast}}\left[D^{\beta^\ast}_{0,\tau^n}(X^{\pi^\ast + \varepsilon\pi}_T + \xi) + \int_{\tau^n}^Tg^\ast_u(\beta^\ast_u, q^\ast_u)\,du\mid {\cal F}_{\tau^n} \right] + \delta
	\end{equation*}
	where the second inequality follows from the same arguments which led to Equation \eqref{eq:weakdual} in the proof of Theorem \ref{thm:to-robustprob}.
	Hence,
	\begin{equation*}
		\eta_{\tau^n} \le \frac{1}{\varepsilon}E_{Q^{q^\ast}}\left[ e^{\int_0^T(\beta^\ast_u)^-\,du}(X^{\pi^\ast + \varepsilon\pi}_T + \xi) + \int_{0}^Tg^\ast_u(\beta^\ast_u, q^\ast_u)\,du\mid {\cal F}_{\tau^n} \right] + \delta.
	\end{equation*}
	Since the right hand side above is $Q^{q^\ast}$-uniformly integrable, taking the limit in \eqref{eq:stopped_max_prin} and using dominated convergence theorem and Fatou's lemma give
	\begin{equation*}
		E_{Q^{q^\ast}}\left[ \int_0^{T}X^{\pi^\ast}_u\tilde{\pi}_u\sigma_u(p_u\theta_u + p_uq^\ast_u + k_u)\,du \right] \le E_{Q^{q^\ast}}\left[D^{\beta^\ast}_{0,T}\eta_T \right] =0,
	\end{equation*}
	recall that both $p$ and $\eta$ are $Q^{q^\ast}$-a.s. continuous processes.
	Arguing as above with $-\pi$ instead of $\pi$, we have
	\begin{equation*}
		E_{Q^{q^\ast}}\left[ \int_0^{T}X^{\pi^\ast}_u\tilde{\pi}_u\sigma_u(p_u\theta_u + p_uq^\ast_u + k_u)\,du \right] = 0.
	\end{equation*}
	Thus, since $\pi$ was taken arbitrary, this leads to
	\begin{equation*}
		p_t\theta_t + p_tq^\ast_t + k_t = 0 \quad P\otimes dt\text{-a.s},
	\end{equation*}
	since $Q^{q^\ast} \sim P$.
	\item \emph{Characterization of} $\beta^\ast$ and $q^\ast$:
	 The function $g$ satisfies \ref{lsc} and $(\beta^\ast, q^\ast) \in \partial g(Y, Z) $ imply that $(Y,Z) \in \partial g^\ast(\beta^\ast, q^\ast) $, and since $g$ is strictly convex, it holds $\partial g^\ast(\beta^\ast, q^\ast) = \set{(Y,Z)}$ so that by \citep[Theorem 25.1]{roc70}, $g^\ast$ is differentiable at $(\beta^\ast, q^\ast)$.
	Hence, $\beta^\ast$ and $q^\ast$ are the points verifying
	\begin{equation*}
		- \frac{\partial g^\ast}{\partial a}(\beta^\ast_t,q^\ast_t) + Y_t = 0 \quad \text{ and }\quad	-\frac{\partial g^\ast}{\partial b }(\beta^\ast_t,q^\ast_t) + Z_t = 0 \quad P\otimes dt \text{-a.s.}
	\end{equation*}	
\end{enumerate}
\end{proof}

%!TEX root = optimierung.tex
\section{Link to Conjugate Duality}
\label{sec:duality}

In this final section we show the inherent link between duality of BSDEs and the theory of conjugate duality in optimization as presented, for instance, in \citet{Eke-Tem}. 
We will exploit the general method of conjugate duality in convex optimization to study the problem at hands.
In Proposition \ref{thm:dualproblem} below we write the dual problem to \eqref{eq:optprob}.
The main result of this section, Theorem \ref{thm:minimax}, shows that even without the condition \ref{qg} which enabled us to have weak compactness, the robust control problem still satisfies a minimax property.
% Let us denote by 
% \begin{equation*}
% 	\mathcal{C} := \Set{X^\pi_T: \pi \in \Pi} \quad \text{and}\quad \mathcal{M} := \Set{ (\beta,q) \in \mathcal{D}\times\mathcal{Q}: C^\beta X^\pi \text{ is $Q^q$ -supermartingale for all } \pi \in \Pi}
% \end{equation*} 
% The set $\mathcal{M} $ is nonempty, it contains the pair $(0,0)$.
Consider the probability measure $Q = Q^\theta$ introduced in Section \ref{sec:setting}.
Recall that ${\cal H}^1(Q)$ is the set of $Q$-martingales $X$ such that $E_Q[\sup_{t \in [0,T]}\abs{X}_t]< \infty$.
We introduce the sets
\begin{equation*}
	\mathcal{C} := \Set{ X^\pi_T: \pi \in \Pi }\cap {\cal H}^1(Q), \quad \mathcal{M} := \Set{M \in \text{BMO}_{++}(Q): E_Q[MX^\pi_T] \le x \text{ for all } \pi \in \Pi}
\end{equation*}
and $\bar{\cal Q}:= \set{q \in {\cal L}: \frac{dQ^q}{dP} \in {\cal M}}$.
Let us define the perturbation function $F$ on $\mathcal{C}\times \mathcal{C}$ with values in $\mathbb{R}$ by
\begin{equation*}
	F(X^\pi_T, H) := \mathcal{E}^g_0\left( X^\pi_T + \xi + H \right).
\end{equation*}
For all $ H \in \mathcal{C}$ we put
\begin{equation*}
	u(H) := \sup_{\pi \in \Pi}F(X^\pi_T, H ).
\end{equation*}
%We did not impose any integrability conditions on the wealth processes relative to the probability measure $P$.
%However, due to the condition \eqref{assump} it holds $\mathcal{C} \subseteq L^1(Q)$, see Section \ref{sec:setting}.
%Thus, the natural dual pair to consider is $(L^1(Q), L^\infty)$, recall that $L^\infty(Q) = L^\infty$ since $Q \sim P$. 
The space $\text{BMO}(Q)$ can be identified with the dual of the space ${\cal H}^1(Q)$.
We extent the function $F$ to the Banach space ${\cal H}^1(Q) \times {\cal H}^1(Q)$ by setting $F(X^\pi_T, H) = -\infty$ whenever $H$ or $X^\pi_T$ does not belong to $\mathcal{C}$.
It holds $u(0) = V(x)$, the value function of the primal control problem.
%, and that each section of $F$ is increasing and concave as a consequence of the properties of $\mathcal{E}^g_0$.
Since $\mathcal{E}^g_0$ is concave increasing, the function $u$ is as well concave increasing, and from $u(0) = V(x) < \infty$ follows that $u(H)< \infty$ for all $H \in \mathcal{C}$. 
Define the concave conjugate $F^\ast$ of $F$ on $\text{BMO}(Q)\times \text{BMO}(Q)$ with values in $\bar{\mathbb{R}}$ by
% We did not impose any integrability condition on the wealth processes that is, we only have a priori that $\mathcal{C} \subseteq L^0$. 
% For every $(\beta, q) \in \mathcal{D} \times \mathcal{Q} $, it holds $dQ^q/dP D^\beta_T \in L^1$.
% But the set $L^0$ is not locally convex and there is not any reasonable duality between the sets $L^0$ and $L^1$.
% To get around this difficulty, we exploit the fact that $\mathcal{C}$ and $\set{dQ^q/dP D^\beta_T: (\beta, q) \in \mathcal{D} \times \mathcal{Q}}$ are both in the positive orthant of $L^0$ and make use of the duality $(L^0_+, L^0_+)$.
% A possible dual pairing for this duality is
% \begin{equation*}
% 	\langle a, b \rangle := E[ab] \in [0,\infty], \quad a,b \in L^0_+.
% \end{equation*} 
% This duality is well known, see for instance \citep{Kup-Svi}, and has been used before in the context of utility maximization, compare \citep{Kra-Sch}.
% We extent the function $F$ to be $-\infty$ whenever an $H$ or $X^\pi_T$ do not belong to $\mathcal{C}$ and define the concave conjugate $F^\ast$ of $F$ on $L^0_+ \times L^0_+$ and valued in $\bar{\mathbb{R}}$ by
\begin{equation*}
	F^*(M^\prime, M) := \inf_{ H, X^\pi_T \in {\cal H}^1(Q)  } \Set{ E_Q\left[ M^\prime X^\pi_T \right] + E_Q\left[ M H \right]  - F(X^\pi_T,H ) }.
\end{equation*}
% and
% \begin{equation*}
%  	\mathcal{E}^\ast_0(\beta,q) := \inf_{ H \in \mathcal{C}}\Set{ E_{Q^q}\left[ D^\beta_{0,T}H \right] -\mathcal{E}^g_0( H) }
% \end{equation*}
% the conjugate of $\mathcal{E}^g_0$.
The function $F^*$ is concave and upper semicontinuous. 
For each $M^\prime \in \text{BMO}(Q)$, put
\begin{equation}
\label{eq:dualprob1}
	v(M^\prime) := \inf_{M \in \text{BMO}(Q) }\set{-F^*( M^\prime,M ) }.
\end{equation}
For $M^\prime = 0$ Equation \eqref{eq:dualprob1} is the dual problem, and the relation $u(0)\leq v(0)$ follows as an immediate consequence of the definition of $F^*$. 
Since the functional $\mathcal{E}^g_0$ is increasing and $\mathcal{E}_0^g(0) > -\infty$ we have $u(0)\geq \mathcal{E}_0^g(0)> -\infty$. 
Hence $v(0)> - \infty$. 
\begin{lemma}
\label{thm:L0_usc}
	Assume  that the driver $g$ defined on $\mathbb{R}_+\times \mathbb{R}^d$ satisfies \ref{jconv}, \ref{lsc}, \ref{nor} and \ref{pos}.
	Then, the function $F$ is $\sigma({\cal H}^1(Q)\times {\cal H}^1(Q), \text{BMO}(Q)\times \text{BMO} (Q))$-upper semicontinuous.
\end{lemma}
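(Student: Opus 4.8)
The plan is to use that $F$ is concave and to reduce the claimed $\sigma(\mathcal{H}^1(Q)\times\mathcal{H}^1(Q),\text{BMO}(Q)\times\text{BMO}(Q))$-upper semicontinuity to upper semicontinuity with respect to the $\mathcal{H}^1(Q)$-norm. First I would verify concavity of $F$. Since the wealth process is linear in the strategy and $\Pi$ is stable under convex combinations (the constraint $X^\pi\ge 0$ being preserved under convex combination), the set $\mathcal{C}=\set{X^\pi_T:\pi\in\Pi}\cap\mathcal{H}^1(Q)$ is convex, hence so is the effective domain $\mathcal{C}\times\mathcal{C}$. As $(X^\pi_T,H)\mapsto X^\pi_T+\xi+H$ is affine and $\mathcal{E}^g_0$ is monotone and concave, $F$ is concave on $\mathcal{C}\times\mathcal{C}$, and the extension by $-\infty$ outside this convex domain preserves concavity.

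Because $\text{BMO}(Q)$ is the dual of $\mathcal{H}^1(Q)$, the topology in the statement is exactly the weak topology of the Banach space $\mathcal{H}^1(Q)\times\mathcal{H}^1(Q)$. For a concave function the superlevel sets $\set{F\ge c}$ are convex, and a convex subset of a Banach space is weakly closed if and only if it is norm closed, by the Hahn--Banach separation theorem (Mazur). Since $F$ is upper semicontinuous precisely when every $\set{F\ge c}$ is closed, $F$ is weakly upper semicontinuous exactly when each $\set{F\ge c}$ is $\mathcal{H}^1(Q)$-norm closed, i.e. when $F$ is norm upper semicontinuous. This reduces the proof to the norm case.

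For the norm case I would take $(X^{\pi_n}_T,H_n)\to(X^\pi_T,H)$ in $\mathcal{H}^1(Q)$. Since the elements of $\mathcal{C}$ and $\xi$ are nonnegative, $G_n:=X^{\pi_n}_T+\xi+H_n\ge 0$ and $G_n\to G:=X^\pi_T+\xi+H$ in $L^1(Q)$; after passing to a subsequence realizing $\limsup_n\mathcal{E}^g_0(G_n)$ and then a further fast subsequence, one obtains $G_n\to G$ $P$-a.s. with a domination in $L^1(Q)$. The decreasing envelopes $G^{(n)}:=\esssup_{m\ge n}G_m$ then satisfy $G^{(n)}\downarrow G$, and monotonicity of $\mathcal{E}^g_0$ gives $\limsup_n\mathcal{E}^g_0(G_n)\le\inf_n\mathcal{E}^g_0(G^{(n)})$, so it remains to show $\mathcal{E}^g_0(G^{(n)})\to\mathcal{E}^g_0(G)$.

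The main obstacle is precisely this last convergence: the monotone stability of Proposition \ref{thm:stability-xi} is stated for terminal conditions in $L^\infty_+$, whereas the $G^{(n)}$ lie only in $L^1(Q)_+$. I expect to bridge this gap by truncation at a level $N$ together with the a priori estimates of Lemma \ref{estimates_u}: the Muckenhoupt $A_p$ condition \eqref{assump} yields an $L^p$ bound, uniform over admissible subsolutions, on the image $u(G_n)$, hence uniform integrability of the family $(u(G_n))$, which controls the truncation error uniformly in $n$ and lets it vanish as $N\to\infty$, while for each fixed $N$ the bounded decreasing sequence $G^{(n)}\wedge N\downarrow G\wedge N$ is handled by Proposition \ref{thm:stability-xi}. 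Upgrading that monotone stability from bounded to $Q$-integrable terminal data in this way is the technical crux of the argument.
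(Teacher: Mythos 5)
Your strategy coincides with the paper's: both arguments reduce the weak upper semicontinuity to norm upper semicontinuity by observing that the superlevel sets of the concave function $F$ are convex (so norm-closed implies weakly closed by Mazur/Hahn--Banach), and both then treat a norm-convergent sequence by forming the decreasing envelopes $\sup_{m\ge n}$ and combining monotonicity of ${\cal E}^g_0$ with the monotone stability of Proposition \ref{thm:stability-xi}. Two points of comparison. First, you omit a step the paper does carry out and that your argument genuinely needs: for the extension of $F$ by $-\infty$ outside ${\cal C}\times{\cal C}$ to be upper semicontinuous, the superlevel sets $\set{F\ge c}$ must be closed in the ambient space ${\cal H}^1(Q)\times{\cal H}^1(Q)$, so one must show that a norm limit of elements of ${\cal C}$ again lies in ${\cal C}$; your norm-u.s.c. step silently assumes the limit already has the form $(X^\pi_T,H)$ with $\pi\in\Pi$. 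The paper proves this closedness of ${\cal C}$ at the outset via the martingale representation theorem and the full-rank assumption on $\sigma\sigma'$; you should add this (convexity of ${\cal C}$ alone does not suffice). Second, the issue you single out as the technical crux --- that the envelopes $\esssup_{m\ge n}G_m$ need not be bounded while Proposition \ref{thm:stability-xi} is stated for $L^\infty_+$ data, and that one must first pass to an a.s.-convergent, dominated subsequence for the envelopes to be finite and to decrease to the limit --- is a legitimate concern, but it is not resolved in the paper either: the paper applies Proposition \ref{thm:stability-xi} directly to $\eta^n=\sup_{m\ge n}\zeta^m$ without addressing boundedness or a.s. convergence. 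Your proposed repair via truncation at level $N$ combined with the uniform integrability furnished by Lemma \ref{estimates_u} and condition \eqref{assump} is a reasonable route, but as written it is only announced, not executed; to have a complete proof you would need to verify that the truncation error is controlled uniformly in $n$ and commutes with the limit $N\to\infty$.
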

\begin{proof}
	See Appendix \ref{appendix}.
\end{proof}
For any $M \in {\cal M}$, define by ${\cal E}^\ast_0$ the convex conjugate of ${\cal E}^g_0$ relative to the dual pair $({\cal H}^1(Q), \text{BMO}(Q))$. 
It follows from \citep[Remark 3.8]{tarpodual} that for each $M \in \mathcal{M}$, there exists $(\beta,q) \in \mathcal{D}\times \mathcal{Q}$, with $q$ unique such that $M = D^\beta_{0,T}dQ^q/dP $, and $D^\beta_{0,T} = E[M] $.
We put	
$$\mathcal{E}^\ast_0(\beta,q) := \inf_{\{M \in {\cal M}: E[M] = D^\beta_{0,T} \}} \mathcal{E}^\ast_0(M). $$

\begin{proposition}
\label{thm:dualproblem}
	Assume  that the driver $g$ defined on $\mathbb{R}_+\times \mathbb{R}^d$ satisfies \ref{jconv}, \ref{lsc}, \ref{nor} and \ref{pos}.
	Further assume that $\xi \in L^\infty_+$.
	Then the dual problem to \eqref{eq:optprob} is given by
	\begin{equation}
	\label{eq:dualproblem}
 		v(0) = \inf_{(\beta,q) \in \mathcal{D}\times \bar{\mathcal{Q}}}\Set{ \mathcal{E}^\ast_0(\beta,q) - E_{Q}\left[\frac{dQ^q}{dP}D^\beta_{0,T} \xi \right] } - x
	\end{equation}
	and the primal problem
	\begin{equation*}
		u(0) = \sup_{X^\pi_T \in {\cal C}} \inf_{(\beta, q) \in {\cal D}\times \bar{\cal Q}} E_{Q^q}\left[D^\beta_{0,T}\frac{dQ}{dP}(X^\pi_T + \xi)+ \int_0^TD^\beta_{0,u}g^\ast_u(\beta_u, q_u)\,du \right].
	\end{equation*}
	% If, in addition, $g$ satisfies \ref{adm} and \ref{qg}, then \eqref{eq:dualproblem} admits a solution $(\beta^\ast, q^\ast)$ satisfying the characterization \eqref{eq:characdual}.
	% \begin{equation*}
	% 	- \frac{\partial g^\ast}{\partial \beta}(\beta^\ast_t,q^\ast_t) + Y_t = 0 \quad \text{ and }\quad	- \frac{\partial g^\ast}{\partial q }(\beta^\ast_t,q^\ast_t) + Z_t = 0; \quad P\otimes dt \text{-a.s.}
	% \end{equation*}
	% Where $(Y,Z)$ solves the BSDE
	% \begin{equation*}
	% 	dY_t = g(Y_t,Z_t)\,dt - Z_t\,dW_t, \quad Y_T = X^{\pi^\ast}_T + \xi
	% \end{equation*}
	% and $\pi^\ast$ is a primal optimizer.
\end{proposition}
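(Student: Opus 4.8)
The plan is to obtain both displayed problems directly from the concave conjugate $F^\ast$, handling the dual and primal sides separately and reading off the two formulas.

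For the dual problem I would start from the definition and negate, writing
\[
	-F^\ast(0,M)=\sup_{X^\pi_T\in\mathcal{C},\,H\in\mathcal{C}}\Set{\mathcal{E}^g_0(X^\pi_T+\xi+H)-E_Q[MH]}.
\]
The decisive manoeuvre is the change of variables $L:=X^\pi_T+\xi+H$, i.e.\ $H=L-X^\pi_T-\xi$, which decouples the supremum into three independent pieces: the conjugate term $\sup_L\Set{\mathcal{E}^g_0(L)-E_Q[ML]}$, the support functional $\sup_{X^\pi_T\in\mathcal{C}}E_Q[MX^\pi_T]$ of $\mathcal{C}$, and the endowment pairing $E_Q[M\xi]$. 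The first piece is the conjugate $\mathcal{E}^\ast_0(M)$, which by the robust representation of Theorem \ref{thm:to-robustprob} taken at $\tau=T$ is finite and, writing $M=D^\beta_{0,T}\,dQ^q/dP$, equals the penalty $E_{Q^q}[\int_0^TD^\beta_{0,u}g^\ast_u(\beta_u,q_u)\,du]$. The second piece is where $\mathcal{M}$ enters: a superhedging (bipolar) argument using the $Q$-supermartingale property of the wealth processes shows that $\sup_{X^\pi_T\in\mathcal{C}}E_Q[MX^\pi_T]$ is finite, and equal to the initial capital $x$, exactly when $M\in\mathcal{M}$, and is $+\infty$ otherwise. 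Hence $v(0)=\inf_M\set{-F^\ast(0,M)}$ reduces to an infimum over $\mathcal{M}$, which I would recast via \citep[Remark 3.8]{tarpodual}: every $M\in\mathcal{M}$ is $M=D^\beta_{0,T}\,dQ^q/dP$ with $(\beta,q)\in\mathcal{D}\times\bar{\mathcal{Q}}$, and the definition $\mathcal{E}^\ast_0(\beta,q)=\inf_{\set{M\in\mathcal{M}:E[M]=D^\beta_{0,T}}}\mathcal{E}^\ast_0(M)$ turns the infimum over $\mathcal{M}$ into one over $\mathcal{D}\times\bar{\mathcal{Q}}$, with $E_Q[M\xi]=E_Q[(dQ^q/dP)D^\beta_{0,T}\xi]$, giving \eqref{eq:dualproblem}.

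The primal representation is then short. Since $u(0)=V(x)=\sup_{\pi\in\Pi}\mathcal{E}^g_0(X^\pi_T+\xi)$ is finite, I would biconjugate the inner functional: $\mathcal{E}^g_0$ is concave and, by Lemma \ref{thm:L0_usc}, $\sigma(\mathcal{H}^1(Q),\text{BMO}(Q))$-upper semicontinuous, so Fenchel--Moreau (equivalently Theorem \ref{thm:to-robustprob}) yields $\mathcal{E}^g_0(X^\pi_T+\xi)=\inf_{(\beta,q)\in\mathcal{D}\times\bar{\mathcal{Q}}}\set{E_Q[M(X^\pi_T+\xi)]+\mathcal{E}^\ast_0(M)}$, with the dual domain already confined to $\bar{\mathcal{Q}}$. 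Recombining the pairing $E_Q[M(X^\pi_T+\xi)]=E_{Q^q}[D^\beta_{0,T}(dQ/dP)(X^\pi_T+\xi)]$ and the penalty $\mathcal{E}^\ast_0(M)=E_{Q^q}[\int_0^TD^\beta_{0,u}g^\ast_u\,du]$ under a single expectation $E_{Q^q}$ produces the displayed primal problem, the supremum over $\pi$ being carried over unchanged from $V(x)$.

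I expect the change-of-measure bookkeeping to be the main obstacle. One must track $P$, $Q=Q^\theta$, $Q^q$ and the discount $D^\beta$ simultaneously, so that the $Q$-pairing $E_Q[M\cdot]$ of the conjugate duality matches the $Q^q$-expectation-with-discount form of the robust representation, and one must confirm that the dual variables with finite conjugate are precisely those in $\mathcal{M}$, i.e.\ $q\in\bar{\mathcal{Q}}$. The most delicate quantitative point is showing that the support functional of $\mathcal{C}$ equals $x$ on $\mathcal{M}$ rather than merely being bounded by it; this is what makes the initial capital appear in the dual and what pins the dual domain down to $\bar{\mathcal{Q}}$.
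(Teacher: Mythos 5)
Your argument follows essentially the same route as the paper's proof: the dual formula is obtained exactly as in the paper by the substitution $H = H' - X^\pi_T - \xi$, which splits $F^\ast(0,M)$ into the conjugate of $\mathcal{E}^g_0$, the support functional of $\mathcal{C}$ (which forces the restriction to $\mathcal{M}$ and produces the $-x$), and the pairing with $\xi$; and your direct biconjugation of $\mathcal{E}^g_0$ for the primal side is the same computation that the paper packages through the Lagrangian $L(X^\pi_T, M) = E_Q[M(X^\pi_T+\xi)] - \mathcal{E}^\ast_0(M)$ and the Fenchel--Moreau representation $F(X^\pi_T,H) = \inf_M\{E_Q[MH] + L(X^\pi_T,M)\}$, both resting on Lemma \ref{thm:L0_usc}. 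The one point where your references do not quite carry the weight you put on them is the identification of $\mathcal{E}^\ast_0(M)$ with the penalty $E_{Q^q}[\int_0^T D^\beta_{0,u}g^\ast_u(\beta_u,q_u)\,du]$: you invoke Theorem \ref{thm:to-robustprob}, but that theorem assumes the extra condition \ref{adm}, which is \emph{not} among the hypotheses of Proposition \ref{thm:dualproblem}, and it represents $\mathcal{E}^g_{0,\tau}$ at a particular random variable rather than identifying the conjugate pointwise in $M$. The paper avoids this by citing the minimal-penalty identification $-\mathcal{E}^\ast_0(M) = \alpha_{\min}(M)$ from \citep[Theorem 3.10]{tarpodual}, which is the correct tool here; with that substitution your argument matches the paper's.
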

\begin{proof}
	For every $M \in L^\infty $ one has
	\begin{align*}
		F^\ast(0, M ) & = \inf_{ H\in {\cal H}^1(Q), X^\pi_T \in {\cal C}}\Set{ E_Q\left[MH \right] - F(X^\pi_T, H)}\\%\mathcal{E}^g_0(X^\pi_T + \xi + H) }\\
					  & = \inf_{ H^\prime\in {\cal H}^1(Q), X^\pi_T \in {\cal C}}\Set{ E_Q\left[ M(H^\prime - X^\pi_T - \xi) - F(X^\pi_T,H^\prime - X^\pi_T - \xi) \right] }.
					  %& = \inf_{ H}\Set{ E_Q\left[ MH \right] -\mathcal{E}^g_0( H) } - \sup_{\pi} E_Q\left[ M(X^\pi_T + \xi) \right].
	\end{align*}
	In fact, $\set{H^\prime - X^\pi_T  - \xi: X^\pi_T \in {\cal C}, \,\, H^\prime \in {\cal H}^1(Q)} \subseteq {\cal H}^1(Q)$ and, reciprocally, for any $H\in {\cal H}^1(Q)$ we can write $H= H^\prime - x - \xi = H^\prime - X^0_T - \xi$ for some $H^\prime \in {\cal H}^1(Q)$.
	Hence,
	\begin{equation*}
		F^\ast(0, M) = \inf_{ H^\prime \in {\cal H}^1(Q)}\Set{ E_Q\left[ MH^\prime \right] -\mathcal{E}^g_0( H^\prime) } - \sup_{X^\pi_T \in {\cal C}} E_Q\left[ M(X^\pi_T + \xi) \right]. 
	\end{equation*}
	It is clear that if there exists $X^\pi_T \in {\cal C}$ such that $E_Q[MX^\pi_T] > x$, then $F^\ast(0, M) = -\infty $. 
	Thus, the supremum in Equation \eqref{eq:dualprob1} can by restricted to $\mathcal{M} $, and $F^\ast(0,M)$ takes the form
	\begin{equation*}
		F^\ast(0,M) = \mathcal{E}^\ast_0( M)  - E_Q\left[ M\xi \right] - x.
	\end{equation*}
	Therefore, the dual problem \eqref{eq:dualprob1} to the control problem \eqref{eq:optprob} is given by
	\begin{align}
	\nonumber
		v(0) &= \inf_{M \in {\cal M}}\Set{ \mathcal{E}^\ast_0(M) - E_Q\left[ M\xi \right] } -x \\
	\nonumber	     
		     &= \inf_{(\beta,q) \in \mathcal{D}\times \bar{\mathcal{Q}}}\inf_{\{M: E[M] = D^\beta_{0,T} \}}\Set{ \mathcal{E}^\ast_0(M) - E_{Q}\left[\frac{dQ^q}{dP}D^\beta_{0,T} \xi \right] } - x\\
	\label{eq:dualprob}
		     &= \inf_{(\beta,q) \in \mathcal{D}\times \bar{\mathcal{Q}}}\Set{\mathcal{E}^\ast_0(\beta,q) - E_{Q}\left[\frac{dQ^q}{dP}D^\beta_{0,T} \xi \right] } - x.
	\end{align}

 	Now, let us introduce the following Lagrangian $L$, which is such that $-L$ is the $H$-conjugate of the function $F$, i.e.
	\begin{equation*}
		L(X^\pi_T, M) = \sup_{H \in \mathcal{C} } \Set{ F(X^\pi_T,H ) - E_{Q}\left[M H \right] }.
	\end{equation*}
	It is well known in convex duality theory, see for instance \cite{Eke-Tem}, that the following hold:
	\begin{equation*}
		F^*( M^\prime,M) = \inf_{X^\pi_T \in {\cal C} } \Set{ E_{Q}\left[ M^\prime X^\pi_T \right] - L(X^\pi_T, M )  }
	\end{equation*}
	and, since $F$ is $\sigma({\cal H}^1(Q)\times {\cal H}^1(Q), \text{BMO}(Q)\times \text{BMO}(Q) )$-upper semicontinuous, the Fenchel-Moreau theorem and definition of $L$ yield
	\begin{equation}
	\label{eq:represent_F_L}
		F(X^\pi_T,H ) = \inf_{M \in \mathcal{M} }\Set{ E_{ Q }\left[M H \right] + L(X^\pi_T,M ) }.
	\end{equation}
	In particular,
	\begin{equation*}
		v(0) = \inf_{ M \in {\cal M} }\sup_{X^\pi_T \in {\cal C} } \Set{ L(X^\pi_T, M) }\quad \text{and}\quad u(0) = \sup_{X^\pi_T \in {\cal C} }\inf_{ M \in \mathcal{M} }\Set{L(X^\pi_T, M)}.
	\end{equation*}
	Let $\pi \in \Pi$ and $M \in \mathcal{M}$.
	%and $(\beta, q) \in \mathcal{D}\times \mathcal{Q} $ such that $M^{\beta,q}: = D^\beta_{0,T}dQ^q/dP \in \mathcal{M}\cap L^\infty $.
	By definition of the Laplacian, we have
	\begin{align*}
		L(X^\pi_T,M ) &= \sup_{H \in {\cal H}^1(Q) }\Set{ F(X^\pi_T ,  H) - E_{Q}\left[ MH \right] }\\
	                       &= \sup_{H^\prime \in {\cal H}^1(Q)}\Set{ F(X^\pi_T,H^\prime - X^\pi_T - \xi) - E_{Q}\left[ M(H^\prime - X^\pi_T - \xi) \right] }\\
	                       &=  \sup_{H^\prime \in  {\cal H}^1(Q) }\Set{ \mathcal{E}^g_0(H^\prime) - E_{Q}\left[ MH^\prime \right] } + E_{Q}\left[M(X^\pi_T + \xi) \right] \\
	                       &= E_{Q}\left[M(X^\pi_T + \xi) \right] - \mathcal{E}^\ast_0(M).
	\end{align*}
	But by the proof of \citep[Theorem 3.10]{tarpodual}, the function 
	$$\alpha_{\text{min}}: M \mapsto \inf_{\set{\beta \in {\cal D}: E[M]=D^\beta_{0,T}}}E_{Q^q}\left[\int_0^TD^\beta_{0,u}g^\ast_u(\beta_u, q_u)\,du \right] $$ 
	is convex and $\sigma({\cal H}^1(Q), \text{BMO(Q)})$-lower semicontinuous; that is, it is the minimal penalty function. 
	Hence,	$ -\mathcal{E}^\ast_0(M) = \alpha_{\text{min}}(M)$ and therefore,
	\begin{equation*}
		L(X^\pi_T, M) = \inf_{\set{\beta \in {\cal D}: E[M]=D^\beta_{0,T}}}E_{Q^q}\left[D^\beta_{0,T}\frac{dQ}{dP}(X^\pi_T + \xi)+ \int_0^TD^\beta_{0,u}g^\ast_u(\beta_u, q_u)\,du \right].
	\end{equation*}
	In particular, this implies
	\begin{equation*}
		u(0) = \sup_{X^\pi_T \in {\cal C}} \inf_{(\beta, q) \in {\cal D}\times \bar{\cal Q}} E_{Q^q}\left[D^\beta_{0,T}\frac{dQ}{dP}(X^\pi_T + \xi)+ \int_0^TD^\beta_{0,u}g^\ast_u(\beta_u, q_u)\,du \right].
	\end{equation*}
	% By Theorem \ref{thm:saddlepoint} and Remark \ref{rem:openinterval}, if $g$ also satisfies \ref{adm} and \ref{qg}, then there exists a saddle point $(\pi^\ast, (\beta^\ast, q^\ast) ) $ such that $v(0) = L(\pi^\ast, (\beta^\ast, q^\ast)) = u(0)$. 	
	% We conclude using \citep[Proposition III.3.1]{Eke-Tem} that $\pi^\ast$ is a primal optimizer, $(\beta^\ast, q^\ast)$ a dual optimizer, and the characterization follows from Theorem \ref{thm:characterization}.
\end{proof}
Next, we show that the control problem \eqref{eq:optprob} satisfies the minimax property even if we do not assume any growth condition on the generator $g$. 
Notice that it does not ensure existence of a saddle point.
We refer to \citep{bac-Fon} for some similar results in robust utility maximization.
\begin{theorem}
\label{thm:minimax}
	Assume that the driver $g$ satisfies \ref{jconv}, \ref{lsc}, \ref{nor} and \ref{pos}. Then, the value functions of the primal problem and dual problem coincide. More precisely, it holds
	\begin{equation*}
		\inf_{ M\in \mathcal{M} }\sup_{X^\pi_T \in {\cal C} } \Set{ L(X^\pi_T, (\beta ,q )) } = \sup_{X^\pi_T \in {\cal C} }\inf_{M  \in \mathcal{M} } \Set{L(X^\pi_T, (\beta ,q )) }.
	\end{equation*}	
\end{theorem}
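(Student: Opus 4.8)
The plan is to recognize the claimed minimax identity as the statement that the primal and dual problems have \emph{no duality gap}, i.e. $u(0)=v(0)$, and to obtain this from conjugate duality together with the upper semicontinuity of Lemma \ref{thm:L0_usc}. First I would recall from the computation of $L$ and $F^\ast$ in the proof of Proposition \ref{thm:dualproblem} that $u(0)=\sup_{X^\pi_T\in\mathcal{C}}\inf_{M\in\mathcal{M}}L(X^\pi_T,M)$ and $v(0)=\inf_{M\in\mathcal{M}}\sup_{X^\pi_T\in\mathcal{C}}L(X^\pi_T,M)$, so that the weak inequality $\sup\inf\le\inf\sup$, already noted in the text as $u(0)\le v(0)$, is immediate. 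The entire content of the theorem is thus the reverse inequality $v(0)\le u(0)$.

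Next I would invoke the conjugate duality identity $v(0)=u^{\ast\ast}(0)$, where $u^{\ast\ast}$ is the $\sigma(\mathcal{H}^1(Q),\text{BMO}(Q))$-upper semicontinuous concave hull of $u$; this follows from $u^\ast(M)=F^\ast(0,M)$ and the definition of the biconjugate, exactly as in Ekeland--Temam. Since $u$ is concave with $u(0)=V(x)$ finite and $u\le u^{\ast\ast}$, the no-gap relation $u(0)=v(0)$ reduces to the single analytic fact that $u$ is upper semicontinuous at the origin, $\limsup_{H\to0}u(H)\le u(0)$. Because the superlevel sets of a concave function are convex, weak upper semicontinuity coincides with norm upper semicontinuity, and it suffices to verify this along norm-convergent sequences $H_n\to0$ in $\mathcal{H}^1(Q)$.

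The main obstacle is precisely this upper semicontinuity: it cannot come from Lemma \ref{thm:L0_usc} alone, since a supremum of upper semicontinuous functions need not be upper semicontinuous, and it genuinely fails for an arbitrary concave increasing functional, so the positivity and martingale structure of $\mathcal{C}$ must be exploited. I would argue by a compactness-and-Fatou sandwich. Given $H_n\to0$, pass to a subsequence realizing $c:=\limsup_n u(H_n)$ and choose near-optimal wealths $X^n\in\mathcal{C}$ with $F(X^n,H_n)\ge u(H_n)-1/n$. Using that each $X^n$ is a positive $Q$-supermartingale with $E_Q[X^n]\le x$ and the uniform integrability of Lemma \ref{estimates_u}, the Delbaen--Schachermayer and Koml\'os type compactness of the proof of Theorem \ref{existence} produces convex combinations $\tilde X^n$ converging $P$-a.s. to some $\bar X\in\mathcal{C}$; applying the same convex weights $\lambda^n_i$ to $(H_n)$ gives $\tilde H^n\to0$. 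Joint concavity of $F$ yields $F(\tilde X^n,\tilde H^n)\ge\sum_i\lambda^n_iF(X^i,H_i)\to c$, while the joint upper semicontinuity of Lemma \ref{thm:L0_usc}, matched to the weak topology through the uniform integrability, yields $\limsup_n F(\tilde X^n,\tilde H^n)\le F(\bar X,0)\le u(0)$. Comparing the two bounds forces $c\le u(0)$, which is the desired upper semicontinuity.

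Combining $u(0)\le v(0)$ with $u(0)=u^{\ast\ast}(0)=v(0)$ then closes the gap, and rewriting through the Lagrangian gives exactly the asserted equality of $\inf_M\sup_X L$ and $\sup_X\inf_M L$. I expect the delicate point to be the reconciliation of the $P$-a.s. (Koml\'os) convergence of the wealth combinations with the $\sigma(\mathcal{H}^1(Q),\text{BMO}(Q))$-upper semicontinuity for which Lemma \ref{thm:L0_usc} is stated, i.e. ensuring the limits are taken in the right topology; this is where the $A_p$-based integrability and the bound $E_Q[X^n]\le x$ do the essential work. In contrast with Corollary \ref{thm:saddlepoint}, this scheme delivers equality of values only and not a saddle point, since nothing here forces the outer infimum over $M\in\mathcal{M}$ to be attained.
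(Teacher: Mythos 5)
Your reduction of the theorem to the absence of a duality gap, $u(0)=v(0)$, via the biconjugate identity $v(0)=u^{\ast\ast}(0)$ is sound, and you correctly isolate the real issue: upper semicontinuity of $u$ at the origin does not follow from Lemma \ref{thm:L0_usc} alone. However, the compactness-and-Fatou sandwich you propose to close this gap does not go through as stated. The Koml\'os/Delbaen--Schachermayer argument yields only $P$-a.s.\ convergence of convex combinations $\tilde X^n_T$ of the near-optimal terminal wealths, and this is too weak on two counts. First, Lemma \ref{thm:L0_usc} gives $\sigma({\cal H}^1(Q)\times{\cal H}^1(Q),\mathrm{BMO}(Q)\times\mathrm{BMO}(Q))$-upper semicontinuity, so to invoke it you need $(\tilde X^n,\tilde H^n)\to(\bar X,0)$ at least weakly in ${\cal H}^1(Q)\times{\cal H}^1(Q)$; almost sure convergence of positive random variables bounded in $L^1(Q)$ does not deliver this, and the uniform integrability you appeal to from Lemma \ref{estimates_u} concerns $u(\xi+X_T)$ under $P$ (with $u$ concave of sublinear growth), not $X_T$ in ${\cal H}^1(Q)$ --- positive $Q$-local martingales with $E_Q[X_T]\le x$ are generically \emph{not} uniformly integrable. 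Second, and for the same reason, the Fatou limit $\bar X_T$ may lose mass ($E_Q[\bar X_T]<x$) and need not belong to ${\cal C}=\set{X^\pi_T:\pi\in\Pi}\cap{\cal H}^1(Q)$; by the extension convention $F(\bar X,0)=-\infty$ off ${\cal C}$, so the chain $\limsup_n F(\tilde X^n,\tilde H^n)\le F(\bar X,0)\le u(0)$ collapses unless you add a superreplication/domination argument that you have not supplied. So the key analytic step of your proof is missing.

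It is worth noting that the paper avoids proving upper semicontinuity of $u$ altogether. It writes $u(0)=\sup_{H}\Set{F(H,0)-\delta_{\cal C}(H)}$ and applies the Fenchel--Rockafellar duality theorem to the pair consisting of the concave, weakly upper semicontinuous function $F(\cdot,0)$ and the indicator $\delta_{\cal C}$ of the weakly closed cone ${\cal C}$ (both facts coming from Lemma \ref{thm:L0_usc}), and then uses the polar identity $\delta^\ast_{\cal C}=\delta_{{\cal C}^\circ}=\delta_{\cal M}$ to recognize the resulting dual value as $v(0)=\inf_{M\in{\cal M}}\set{-F^\ast(0,M)}$. In other words, the closedness hypotheses are placed on $F$ and ${\cal C}$ separately, where they are already available, rather than on the marginal value function $u=\sup_\pi F(X^\pi_T,\cdot)$, where they are genuinely hard to verify --- precisely the difficulty your argument runs into. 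If you want to salvage your route, you would have to either establish weak ${\cal H}^1(Q)$-compactness of the near-optimal wealths (which the standing assumptions do not give) or reorganize the argument along the paper's lines.
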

\begin{proof}
	The main argument of the proof is the Fenchel-Rockafellar theorem applied on the Banach space ${\cal H}^1(Q) $. 
	% In other to apply the result, we introduce an auxiliary space
	% \begin{equation*}
	% 	\tilde{ \mathcal{M} }:= \mathcal{M} \cap L^\infty % \Set{ (\beta,q) \in ( L^\infty )^\ast: \, E_{Q^q}\left[C^\beta_T X^\pi_T \right] \le x \text{ for all } X^\pi_T \in \tilde{C} }.
	% \end{equation*}
	%  and define
	% \begin{equation*}
	% 	\tilde{v}(0) := \inf_{M \in \tilde{\mathcal{M}}}\{-F^*(0, M )\}.
	% \end{equation*}
	By definition $\mathcal{M} = \mathcal{C}^*$, the polar cone of $\mathcal{C}$ with respect to the dual pair $({\cal H}^1(Q), \text{BMO}(Q))$. 
	Moreover, since $\mathcal{C}$ is a cone, $\mathcal{M}$ is the polar of $\mathcal{C}$, i.e. $\mathcal{M} = \mathcal{C}^\circ$. 
	Consider the convex-indicator function 
	\begin{equation*}
		\delta_{\mathcal{C}}(H) = 
		\begin{cases}
			0 & \text{ if} \quad H \in \mathcal{C}\\
			\infty & \text{ if} \quad H \in {\cal H}^1(Q)\setminus \mathcal{C}.
		\end{cases}
	\end{equation*}
	We can rewrite $u$ as
	\begin{equation*}
		u(0) = \sup_{H \in {\cal H}^1(Q)}\Set{ F(H, 0) - \delta_{\mathcal{C}}(H) }.
	\end{equation*}
	Since ${\cal C}$ is $\sigma({\cal H}^1(Q), \text{BMO}(Q))$-closed (see proof of Lemma \ref{thm:L0_usc}), the function $F(\cdot, 0) - \delta_{\mathcal{C}}(\cdot)$ is concave and $\sigma({\cal H}^1(Q), \text{BMO}(Q))$-upper semicontinuous.
	Hence, by \citep[Corollary 1]{Roc66} we have
	\begin{equation*}
		u(0) = \inf_{M \in \text{BMO}(Q)}\Set{\delta^\ast_{\mathcal{C}}(\beta,q) - F^\ast(0,M)}.
	\end{equation*}
	The function $\delta_{\mathcal{C}}$ obeys the conjugacy relation $\delta^\ast_{\mathcal{C}} = \delta_{\mathcal{C}^\circ} = \delta_{\mathcal{M}}$, see \citep[Section 11.E]{Roc-Wets}. 
	Thus,
	\begin{align*}
		u(0) & = \inf_{ M \in \text{BMO}(Q)}\Set{ \delta_{\mathcal{M}} (M) - F^\ast(0, M)}\\
             & = \inf_{ M \in \mathcal{M}}\set{ -F^\ast(0,M) } = v(0).
	\end{align*}
	% But since $\tilde{\mathcal{M}} \subseteq \mathcal{M}$, we have $v(0) \leq \tilde{v}(0)$.
	% Therefore, $v(0)\leq \tilde{v}(0) = u(0)$.
	This concludes the proof.
 	\end{proof}
% The Fenchel-Rockafellar theorem was also used in \citep[Theorem 3.5]{Wes-Zhe}, to show equality of the primal and dual value functions in utility maximization, but in a setting and with a method different from ours.

% \input{chap05}
 \begin{appendix}
 %!TEX root = optimierung.tex
\section{Proofs of Intermediate Results}
\label{appendix}
\begin{proof}[of Lemma \ref{convex_adm}]
	Let $(X^1, Y^1, Z^1)$ and $(X^2, Y^2, Z^2)$ be two elements of $\mathcal{A}(x)$; and $\lambda_1, \lambda_2\in (0,1)$ such that $\lambda_1 + \lambda_2 = 1$. 
	Then, by joint convexity of $g$, $(\lambda_1Y^1 + \lambda_2Y^2, \lambda_1Z^1 + \lambda_2Z^2)$ satisfies Equation \eqref{eq:subsol} and the terminal condition $\lambda_1Y^1_T + \lambda_2Y^2_T \leq \lambda_1X^1_T + \lambda_2X^2_T + \xi$ is also satisfied. 
	In addition, since $u^{-1}(E[u(\cdot)])$ is concave, for all $0\leq s\leq t\leq T$, we have
	\begin{align*}
		u^{-1}(E[u(\lambda_1Y^1_t + \lambda_2Y^2_t )\mid \mathcal{F}_s]) &\geq \lambda_1u^{-1}(E[u(Y^1_t)\mid \mathcal{F}_s]) + \lambda_2u^{-1}(E[u(Y^2_t)\mid \mathcal{F}_s])\\
                                     & \geq \lambda_1u^{-1}(u(Y^1_s)) + \lambda_2u^{-1}(u(Y^2_s))\\
                                     & = \lambda_1Y^1_s + \lambda_2Y^2_s,
	\end{align*}
	where the second inequality comes from the facts that $Y^1$ and $Y^2$ are admissible and $u^{-1}$ increasing. 
	Hence because $u$ is increasing, we have 
	\begin{equation*}
		E[u(\lambda_1Y^1_t + \lambda_2Y^2_t )\mid \mathcal{F}_s]\geq u(\lambda_1Y^1_s + \lambda_2Y^2_s),
	\end{equation*}
	which implies that $\lambda_1Y^1 + \lambda_2Y^2$ is admissible. 
	Put $X^1 = X^{\pi^1}$ and $X^2 = X^{\pi^2}$.
	The process $\lambda_1X^1 + \lambda_2X^2$ is a wealth process, since 
	$$\lambda_1X^1_t + \lambda_2X^2_t = x + \int_0^t(\lambda_1\pi^1_u + \lambda_2\pi^2_u)\sigma_u\,dW^Q_u.$$ 
	% In our Brownian filtration, since $\lambda_1X^1 + \lambda_2X^2$ is a positive $Q$-local martingale starting at $x$, by \citep[Proposition VIII.1.6]{Rev-Yor}, there exists a predictable process $\Lambda$ such that
	% \begin{equation*}
	% 	\lambda_1X^1 + \lambda_2X^2 = x\mathcal{E}\left( \int\Lambda\,dW^Q \right).
	% \end{equation*}
	% Defining $\pi$ to be such that $\pi\sigma = \Lambda$, it follows that $\lambda_1X^1 + \lambda_2X^2$ satisfies \eqref{eq:wealth}.
\end{proof}

\begin{proof}[of Proposition \ref{thm:stability-xi}]
	First notice that the operator $\mathcal{E}^g_0(\cdot)$ is increasing. 
	Indeed, if $\xi' \leq \xi$ then $\mathcal{A}^r(\xi^\prime, g) \subseteq \mathcal{A}^r(\xi, g)$, which implies $\mathcal{E}^g_0(\xi^\prime) \le \mathcal{E}^g_0(\xi)$.
	Since the sequence $(\xi^n) \subseteq L^{\infty}_+$ is decreasing, the limit $\xi$ belongs to $L^{\infty}_+$.
	By monotonicity, $(\mathcal{E}^g_0(\xi^n))$ is a decreasing sequence, bounded from below by $\mathcal{E}^g_0(\xi)$. 
	Thus, we can define $Y_0 := \lim_{n\rightarrow \infty}\mathcal{E}^g_0(\xi^n) \geq \mathcal{E}^g_0(\xi)$. 
	By monotonicity and the condition $\ref{nor}$, $\mathcal{E}^g_0(\xi) \ge \mathcal{E}^g_0(0) > - \infty$.  
	Theorem \ref{existence} yields a maximal subsolution $( \bar{Y}^n, \bar{Z}^n ) \in \mathcal{A}^r(\xi^n,g) $ with $\bar{Y}^n_0 = \mathcal{E}^g_0(\xi^n) $ for all $n \in \mathbb{N}	$.
	We can use the method introduced in the proof of Theorem \ref{existence} to obtain a pair $( \bar{Y}, \bar{Z}) \in \mathcal{A}^r( \xi,g)$ with
	\begin{equation*}
		Y_0= \lim_{n\rightarrow \infty}\mathcal{E}^g_0(\xi^n) = \bar{Y}_0 = \mathcal{E}^g_0(\xi).
	\end{equation*}
	The sequence $(\bar{Y}^n_0)$ is not increasing as in the proof of Theorem \ref{existence} but decreasing. 
	Nevertheless we can obtain an estimate such as that of \eqref{z1} using $\bar{Y}^n_0 \leq Y^1_0$. 
	Finally, $\mathcal{E}^g_0(\xi)$ is optimal. 
	In fact, let $( Y, Z) \in \mathcal{A}^r(\xi, g )$ be any subsolution. Since $\xi \leq \xi^n$ for all $n \in \mathbb{N}$, we have $(Y, Z) \in \mathcal{A}^r(\xi^n, g )$. 
	Thus, $Y_0 \leq \mathcal{E}^g_0(\xi^n)$ for all $n$. 
	Taking the limit as $n$ tends to infinity, we conclude $Y_0\leq \mathcal{E}^g_0(\xi)$.
\end{proof}

\begin{proof}[of Proposition \ref{thm:stability-g}]
		Since $(g^n)$ is increasing, $(\mathcal{E}^{g^n}_0(\xi))$ is decreasing and bounded from below by $\mathcal{E}^{g}_0(\xi)$.
	Define $Y_0 := \lim_{n\rightarrow \infty}\mathcal{E}^{g^n}_0(\xi) \geq \mathcal{E}^g_0(\xi)$. 
	$Y_0$ is finite since $\mathcal{E}_0^{g}(\xi) \leq Y_0\le \mathcal{E}_0^{g^1}(\xi)$.
	For all $n$, there exists $(\bar{Y}^n, \bar{Z}^n) \in {\cal A}^r(\xi, g^n)$ such that ${\cal E}^{g^n}_0(\xi) = \bar{Y}^n_0$.	 
	Then by the method introduced in the proof of Theorem \ref{existence} we can obtain a candidate $( \bar{Y}, \bar{Z})$, maximal subsolution of the system with parameters $g$ and $\xi$.
	The verification that $( \bar{Y}, \bar{Z})$ is indeed an element of $\mathcal{A}^r(\xi,g)$ relies on Fatou's lemma and monotone convergence theorem, since $g^n\uparrow g$. 
	See the proof of \citep[Theorem 4.14]{DHK1101} for similar arguments. 
	The subsolution $(\bar{Y}, \bar{Z})$ is maximal, since $\bar{Y}_0 = Y_0 $.
\end{proof}

\begin{proof}[of Lemma \ref{thm:L0_usc}]
	Let us first show that ${\cal C}$ is closed in ${\cal H}^1(Q)$.
	For any sequence $(X^n_T) \subseteq {\cal C}$ converging to $X_T$ in ${\cal H}^1(Q)$, the process $X_t := E_Q[X_T\mid {\cal F}_t]$ defines a positive $Q$-martingale starting at $x$.
	By martingale representation theorem, there exists $\nu \in {\cal L}^1(Q)$ such that $X_t = x + \int_0^t\nu_u\,dW_u^Q$, but since $\sigma\sigma^\prime$ is of full rank, we can find a predictable process $\pi$ such that $\pi\sigma = \nu$.
	Therefore, $dX_t = \pi_t\sigma_t(\theta_tdt + dW_t)$.
	That is, $X \in {\cal C}$.
	
	Now it suffices to show that the function $F$ is $\sigma({\cal H}^1(Q)\times {\cal H}^1(Q), \text{BMO}(Q)\times \text{BMO}(Q))$-upper semicontinuous on ${\cal C}\times {\cal C}$ because the extension to ${\cal H}^1(Q)\times {\cal H}^1(Q)$ would also be weakly upper semicontinuous. 
	Hence, we need to show that for every $c \ge 0$ the concave level set $\set{(\alpha,\gamma)\in {\cal C}\times {\cal C} : F(\alpha, \gamma) \ge c} $ is closed in ${\cal C}\times {\cal C}$.
	Let $c \ge 0$ be fixed and let us show that $\set{\zeta\in  {\cal H}^1(Q) : {\cal E}^g_0(\zeta) \ge c}$ is ${\cal H}^1(Q)$-closed. 
	Let $(\zeta^n)$ be a sequence converging in ${\cal H}^1(Q)$ to $\zeta$ and such that $\mathcal{E}^g_0(\zeta^n) \ge c $ for every $n \in \mathbb{N}$.
	Put $\eta^n := \sup_{m\ge n}\zeta^m $, $n \in \mathbb{N}$. 
	The sequence $(\eta^n)$ decreases to $\zeta$ and by Proposition \ref{thm:stability-xi}, $(\mathcal{E}^g_0(\eta^n))$ converges to $\mathcal{E}^g_0(\zeta)$ and is decreasing.
	Hence, since $\mathcal{E}^g_0(\zeta) = \lim_{n \rightarrow \infty}\mathcal{E}^g_0(\eta^n) = \inf_{n}\mathcal{E}^g_0(\eta^n) $, it holds
	\begin{align*}
		\mathcal{E}^g_0(\zeta) & = \inf_{n \in \mathbb{N} }\mathcal{E}^g_0\left( \sup_{m \ge n}\zeta^m \right)\\
						       & \ge \inf_{n \in \mathbb{N} }\sup_{m \ge n}\mathcal{E}^g_0(\zeta^m) = \limsup_{n \rightarrow \infty}\mathcal{E}^g_0(\zeta^n).
	\end{align*}
	Now for every sequence $(\alpha^n, \gamma^n) \subseteq {\cal C}\times {\cal C}$ converging to $(\alpha,\gamma)\in {\cal C}\times {\cal C}$ in ${\cal H}^1(Q)\times {\cal H}^1(Q)$ such that $F(\alpha^n, \gamma^n)\ge c$ for every $n\in \mathbb{N}$ one has
	\begin{align*}
		c&\le \limsup_{n \rightarrow \infty}F(\alpha^n, \gamma^n) = \limsup_{n \rightarrow \infty}{\cal E}^g_0(\alpha^n + \gamma^n + \xi) \\
		&\le {\cal E}^g_0(\alpha + \gamma + \xi) = F(\alpha, \gamma).
	\end{align*}
	This concludes the proof.
\end{proof}
 \end{appendix}

\bibliographystyle{abbrvnat}
\bibliography{my-references}
%\bibliography{./../commun/Bibliography/bibliography}

\end{document}